\newtheorem{thm}{Theorem}[section]
\newtheorem{lem}{Lemma}[section]
\newtheorem{prop}{Proposition}[section]
\theoremstyle{definition}
\theoremstyle{remark}
\newtheorem{rem}{Remark}[section]
\numberwithin{equation}{section}
\numberwithin{equation}{section}
\newcommand{\mR}{\mathbb{R}}                    
\newcommand{\abs}[1]{\lvert #1 \rvert}          
\newcommand{\norm}[1]{\lVert #1 \rVert}         
\newcommand{\ol}[1]{\overline{#1}}
\newcommand{\p}{\partial}
\newcommand{\R}{\mathbb{R}} \newcommand{\mathR}{\mathbb{R}}
\newcommand{\Rn}{{\mathR^n}}
\newcommand{\supp}{\mathop{\rm supp}}
\DeclareMathOperator{\divr}{div}
\newcommand{\dif}[1]{\,\mathrm{d}{#1}} 
\newcommand{\nrm}[2][]{ \| {#2} \|_{#1}}
\newcommand{\agl}[1][\cdot]{ \langle {#1} \rangle}
\newcommand{\df}{\mathrm{d}}
\newcommand{\calX}{{\mathcal{X}}}
\newcommand{\comment}[1]{} 		
\newcommand{\sq}[1]{{#1}}
\title[Fixed angle inverse scattering]{Fixed angle inverse scattering in the presence of a Riemannian metric}
\author{Shiqi Ma}
\address{Department of Mathematics and Statistics, Universify of Jyv\"askyl\"a, Jyv\"askyl\"a, Finland}
\email{mashiqi01@gmail.com, shiqi.s.ma@jyu.fi}
\author{Mikko Salo}
\address{Department of Mathematics and Statistics, Universify of Jyv\"askyl\"a, Jyv\"askyl\"a, Finland}
\email{mikko.j.salo@jyu.fi}
\dedicatory{Dedicated to Michael Klibanov on the occasion of his 70th birthday}
\begin{document}

\begin{abstract}
	
	We consider a fixed angle inverse scattering problem in the presence of a known Riemannian metric. First, assuming a no caustics condition, we study the direct problem by utilizing the progressing wave expansion. Under a symmetry assumption on the metric, we obtain uniqueness and stability results in the inverse scattering problem for a potential with data generated by two incident waves from opposite directions. Further, similar results are given using one measurement provided the potential also satisfies a symmetry assumption. This work extends the results of \cite{RSfixed2019March, RSfixed2019} from the Euclidean case to certain Riemannian metrics.
	
	\medskip
	
	\noindent{\bf Keywords:}~~inverse medium problem, fixed angle scattering, Carleman estimates, Riemannian metric
	
	{\noindent{\bf 2010 Mathematics Subject Classification:}~~35Q60, 35J05, 31B10, 35R30, 78A40}
	
\end{abstract}

\maketitle

\section{Introduction} \label{sec:intro-ACR2019}

We study the fixed angle inverse scattering problem in the presence of a Riemannian metric. In Euclidean space, this problem corresponds to determining a potential $q \in C^{\infty}_c(\R^n)$ from the knowledge of the scattering amplitude $a_q(\,\cdot\,,\omega,\,\,\cdot\,)$ for a fixed direction $\omega \in S^{n-1}$. 
\sq{As discussed in \cite{RSfixed2019}, an equivalent inverse problem is to} determine $q$ by measuring $U|_{\partial B \times \R}$ where $U$ solves the wave equation
\begin{equation} \label{eq:0-ACR2019}
\left\{\begin{aligned}
	& (\partial_t^2 - \Delta + q)U(x,t) = 0 \text{ in $\R^{n+1}$} \\
	& U|_{\{ t \ll 0 \}} = \delta(t-x\cdot \omega),
\end{aligned}\right.
\end{equation}
and where $B$ is a ball containing the support of $q$.

In the recent articles \cite{RSfixed2019March} and \cite{RSfixed2019}, it was proved that measurements corresponding to \emph{two} fixed directions $\pm \omega \in S^{n-1}$ uniquely determine a potential $q \in C^{\infty}_c(\R^n;\R)$. 
Here $C^{\infty}_c(\R^n;\R)$ stands for the set of real-valued $C^\infty$-smooth functions with compact supports.
The objective of this work is to investigate to what extent it is possible to replace the Euclidean metric on $\R^n$ by a known Riemannian metric. 
This corresponds to determining a potential $q$ from two scattering measurements in the presence of a known nonconstant sound speed.

%
We establish some notations in order to state the main theorems. Denote by $B$ the open unit ball in $\R^n$. 
Let $x$ be Cartesian coordinates in $\R^n$, and let $g$ be a smooth Riemannian metric on $\R^n$ so that 
\begin{equation} \label{asm:gce-ACR2019}
g_{jk}(x) = \delta_{jk} \text{ for } |x| \geq 1.
\end{equation}
Let $\Delta_g$ be the Laplace-Beltrami operator, given in coordinates by 
\[
\Delta_g u = \abs{g}^{-1/2} \p_j(\abs{g}^{1/2} g^{jk} \p_k u)
\]
where $(g^{jk}) = (g_{jk})^{-1}$ and $\abs{g} = \det(g_{jk})$. Here and throughout this article, we use the Einstein summation convention where a repeated index in upper and lower position is summed.

Given $q \in C^{\infty}_c(\R^n)$ supported in $\overline{B}$, let $U_q^{\pm} = U_q^{\pm}(x,t)$ be the unique solution of
\begin{equation} \label{eq:1-ACR2019}
	\left\{\begin{aligned}
& (\partial_t^2 - \Delta_g + q)U_q^{\pm} = 0 \text{ in $\R^{n+1}$}, \\
& U_q^{\pm}|_{\{ t < -1\}} = \delta(t \mp x_n).
\end{aligned}\right.
\end{equation}
The assumption \eqref{asm:gce-ACR2019} ensures that $\delta(t \mp x_n)$ solves the free equation $(\partial_t^2 - \Delta_g) U = 0$ for $t < -1$. Then $U_q^{\pm}$ is the solution related to the incoming plane wave $\delta(t \mp x_n)$ propagating in direction $\pm e_n$. We will make an \emph{absence of caustics} assumption (here $B_{\pm} = B \cap \{ \pm x_n \geq 0 \}$): 
\[
\text{There are $\omega_{\pm} \in C^{\infty}(\ol{B})$ with $|\df \omega_\pm|_g = 1$ in $B$, $\omega_{\pm} = x_n$ to first order on $\p B_{\pm}$.}
\]
\sq{Here $|\df \omega_\pm|_g := \sqrt{g^{jk} (\partial_{x_j} \omega) (\partial_{x_k} \omega)}$ is the $g$-norm of $\df \omega_{\pm}$.}
This assumption implies that the solutions $U_q^{\pm}$ have the following explicit representations in $\ol{B} \times \mR$ (see Lemma \ref{lem:dp-ACR2019}): one has 
\[
U_q^{\pm}(x,t) = \delta(t \mp \omega_{\pm}(x)) + u_q^{\pm}(x,t) H(t \mp \omega_{\pm}(x))
\]
where $H(t)$ is the Heaviside function, and $u_q^{\pm}$ is smooth in $\{ t \geq \pm \omega_{\pm}(x) \}$ and vanishes in $\{ t < \pm \omega_{\pm}(x) \}$. Thus measuring $U_q^{\pm}|_{\p B \times \mR}$ corresponds to measuring $u_q^{\pm}|_{\p B \times \mR}$, and the latter quantity is well defined pointwise.

We wish to give an analogue of the results in \cite{RSfixed2019March} and \cite{RSfixed2019} stating that measurements from two fixed directions $\pm e_n$ uniquely determine the potential $q$. The method relies on putting the solutions $u_q^+$ in $\{ t \geq \omega_+(x) \}$ and $u_q^-$ in $\{ t \geq -\omega_-(x) \}$ together by applying the reflection $t \mapsto -t$ to the latter solution. For this, we need the interfaces $\{ t = \omega_+(x) \}$ and $\{ t = \omega_-(x) \}$ to match, i.e.\ we need $\omega_+ = \omega_-$. This is guaranteed by the following condition:
\begin{equation} \label{assumption1-ACR2019}
\left\{ \begin{array}{c} \text{There is $\omega \in C^{\infty}(\ol{B})$ with $\abs{d\omega}_g = 1$ in $B$ } \\[3pt]
\text{and $\omega = x_n$ to infinite order on $\p B$.} \end{array} \right.
\end{equation}
This condition combines an absence of caustics assumption and a symmetry assumption for the metric $g$. In fact, by Lemma \ref{lemma_omega_equivalence} each of the following two conditions is equivalent with \eqref{assumption1-ACR2019}:
\begin{align}
 & \left\{ \begin{array}{l} \text{$\R^n$ is smoothly parametrized by $g$-geodesics starting from $\{ x_n = -1 \}$ in} \\[3pt]
\text{direction $e_n$, and any such geodesic meets $\{ x_n = 1 \}$ pointing in direction $e_n$.} \end{array} \right.  \label{assumption1a-ACR2019} \tag{1.4a} \\[3pt]
 & \left\{ \begin{array}{l} \text{There is a global coordinate system $y = (y',y_n)$ in $\R^n$ with} \\[3pt]
\text{$\partial_{y_n} = \partial_{x_n}$ outside $B$, so that $g(y) = \left( \begin{array}{cc} g_0(y) & 0 \\ 0 & 1 \end{array} \right)$.} \end{array} \right. \label{assumption1b-ACR2019}  \tag{1.4b}
\end{align}

Finally, the method is based on Carleman estimates for the wave equation, which are available in particular under the following assumption (see the discussion after Lemma \ref{lem:CarlEst-ACR2019} as well as Lemma \ref{lem:weight-ACR2019}):
\begin{equation} \label{ass2-ACR2019}
\text{$(\overline{B}, g)$ admits a smooth strictly convex function with no critical point in $\overline B$.}
\end{equation}
Our first main theorem states that under these assumptions it is possible to determine an arbitrary potential $q$ from two measurements.

\begin{thm} \label{thm:mst-ACR2019}
Let $g$ be a smooth Riemannian metric on $\R^n$, $n \geq 2$, satisfying \eqref{asm:gce-ACR2019}, \eqref{assumption1-ACR2019} and \eqref{ass2-ACR2019}. There is $T > 0$ such that for any $q_1, q_2 \in C^{\infty}_c(\R^n;\R)$ supported in $\overline{B}$, if one has 
\[
u_{q_1}^{\pm}|_{\partial B \times [-1,T]} = u_{q_2}^{\pm}|_{\partial B \times [-1,T]},
\]
then $q_1 = q_2$.
\end{thm}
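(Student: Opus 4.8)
The plan is to reduce the inverse scattering problem to an integral geometry / unique continuation statement for a hyperbolic equation, following the Bukhgeim--Klibanov method as in \cite{RSfixed2019March, RSfixed2019}, but carried out in the geodesic coordinates provided by \eqref{assumption1b-ACR2019}. First I would use the no-caustics assumption \eqref{assumption1-ACR2019} (together with Lemma \ref{lem:dp-ACR2019}) to write $U_{q_j}^{\pm} = \delta(t \mp \omega(x)) + u_{q_j}^{\pm} H(t \mp \omega(x))$ with the \emph{same} phase $\omega = \omega_+ = \omega_-$, and observe that the progressing wave expansion forces the leading singularity to be independent of $q$; hence the difference $w^{\pm} := u_{q_1}^{\pm} - u_{q_2}^{\pm}$ satisfies a wave equation with source $-(q_1 - q_2) u_{q_2}^{\pm}$ (or the analogous term) in $\{ t > \pm \omega(x) \}$, vanishes across the characteristic surface $\{ t = \pm \omega(x) \}$ together with its derivatives to the relevant order, and has vanishing Cauchy data on $\partial B \times [-1,T]$ by hypothesis. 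Setting $q := q_1 - q_2$, the goal becomes: show $w^{\pm} \equiv 0$ and then peel off $q = 0$.

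Next I would symmetrize. Applying the reflection $t \mapsto -t$ to $w^-$ and gluing it to $w^+$ along the common interface $\{ t = \omega(x) \}$ (which is exactly why \eqref{assumption1-ACR2019} was imposed), one obtains a single function $W$ defined on a two-sided region, solving a wave equation $(\partial_t^2 - \Delta_g + q_2) W = q \, (\text{known function})$ with zero Cauchy data on the lateral boundary $\partial B \times [-T,T]$. Differentiating in $t$ and using that the odd part in $t$ of the data encodes $q$ times something nonvanishing on $\{ t = \omega(x) \}$ (here one needs the transport equation for the first progressing-wave coefficient to guarantee that coefficient is nonzero — this is the role of the no-caustics condition), one reduces to a Carleman-estimate argument. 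Specifically, I would choose the Carleman weight adapted to the strictly convex function furnished by \eqref{ass2-ACR2019} (via Lemma \ref{lem:weight-ACR2019}), apply the hyperbolic Carleman estimate of Lemma \ref{lem:CarlEst-ACR2019} to $\partial_t W$ on a suitable sub-region where the weight controls the time-like directions, and absorb the potential term $q_2 W$ and the source term into the left-hand side using the largeness of the Carleman parameter and an energy-type estimate relating $W$ to $\partial_t W$. This yields $q \equiv 0$ on the set reached by the estimate; iterating the weight's level sets (a standard "layer-stripping" via the convex exhaustion function) covers all of $B$, and then $w^{\pm} \equiv 0$ follows from uniqueness for the now-homogeneous wave equation.

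The main obstacle, I expect, is geometric rather than analytic: making the Carleman estimate and the reflection argument compatible with the \emph{Riemannian} geometry. In the Euclidean case of \cite{RSfixed2019March, RSfixed2019} the phase is simply $\omega(x) = x_n$, the reflected and unreflected regions fit together trivially, and the relevant Carleman weight is an explicit quadratic polynomial; here one must instead work in the coordinates of \eqref{assumption1b-ACR2019} where $g = \mathrm{diag}(g_0(y), 1)$ so that $\omega = y_n$ and the characteristic cone $\{ t = \pm y_n \}$ is flat in $y_n$, but the operator $\Delta_g$ now has variable coefficients in the $y'$ block. One then needs the strictly convex function of \eqref{ass2-ACR2019} to produce a weight whose conormal derivative has the correct sign on the \emph{curved} lateral boundary and on the time-slices, so that the boundary terms in the Carleman integration by parts have a favourable sign; verifying this, and checking that the time $T$ can be chosen uniformly (depending only on $g$, not on $q_1, q_2$), is where the real work lies. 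A secondary subtlety is ensuring enough regularity of $w^{\pm}$ up to the characteristic surface — supplied by the progressing wave expansion of Lemma \ref{lem:dp-ACR2019} — so that the Carleman estimate may legitimately be applied and the boundary contributions along $\{ t = \pm \omega(x) \}$ vanish.
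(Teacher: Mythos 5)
Your overall strategy---progressing wave expansion, reduction to a wave equation with source $-\tilde q\, u_2$, a reflection $t\mapsto -t$ made possible by the symmetric eikonal in \eqref{assumption1-ACR2019}, and a Bukhgeim--Klibanov argument with the pseudoconvex weight of Lemma \ref{lem:weight-ACR2019} and the Carleman estimate of Lemma \ref{lem:CarlEst-ACR2019}---is indeed the strategy of the paper. The genuine gap is in your central gluing step: pasting $w^+$ and the time-reflected $w^-$ along $\{t=\omega(x)\}$ does \emph{not} produce a single function $W$ solving one wave equation with zero Cauchy data on a two-sided region. By the transport identities of Lemma \ref{lem:dp-ACR2019} (see \eqref{eq:uvX1s-ACR2019}), the traces of the two pieces on $\Gamma_g$ are $-\tfrac12 a_{-1}(x)\int_{-\infty}^{0}\tilde q(\gamma_1(\tau))\dif \tau$ and $\tfrac12 a_{-1}'(x)\int_{0}^{+\infty}\tilde q(\gamma_1(\tau))\dif \tau$; their difference is essentially the full ray transform of $\tilde q$ along the bicharacteristic and is nonzero in general, so $W$ jumps across $\Gamma_g$, $(\square_g+q_1)W$ acquires distributional terms supported on $\Gamma_g$, and applying the Carleman estimate to $\partial_t W$ across that interface is not legitimate. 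Relatedly, the classical device of differentiating in $t$ and reading off $q$ from the bottom slice presupposes a space-like bottom; here the bottom of $Q_+$ is the \emph{characteristic} surface $\Gamma_g$, and $\tilde q$ enters instead through the tangential identity $X_1|_{(x,\omega_1(x))}(\tilde u/a_{-1})=-\tilde q(x)/2$. The proposed layer-stripping over level sets is also not how the argument runs (nor is it needed), and the vanishing of the Neumann data on $\Sigma$ is not part of the hypothesis---it has to be derived from the exterior problem.

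For comparison, the paper keeps the two reflected problems separate: it applies the Carleman and energy estimates on $Q_+$ for $\tilde u$ and on $Q_-$ for $\tilde v$, and sums. The indefinite boundary quadratic forms on $\Gamma_g$ enter with opposite signs ($\pm\sigma F$, see \eqref{eq:pm8-ACR2019}--\eqref{eq:pm9-ACR2019}) and cancel up to a term controlled by $\calX(\tilde u/a_{-1}-\tilde v/a_{-1}')$, which by \eqref{eq:uvu-ACR2019} equals a boundary quantity (the value of $\tilde u/a_{-1}$ at the exit point $F(x,\omega_1(x))\in\Gamma_g\cap\Sigma$, with the relevant vector fields tangential to $\Gamma_g\cap\Sigma$); this is Proposition \ref{prop:vpmAsm-ACR2019} together with the proof of Theorem \ref{thm:XuvC2-ACR2019}, which yields a Lipschitz stability estimate for $q_1-q_2$ and hence uniqueness. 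Since the strictly convex function in \eqref{ass2-ACR2019} has no critical points on $\overline B$, Lemma \ref{lem:weight-ACR2019} gives a single globally pseudoconvex weight on $\overline Q$ and one application of the estimate covers all of $B$; the Neumann data on $\Sigma$ is handled via the exterior problem as in \cite[Lemma 3.3]{RSfixed2019March}. If you replace your gluing step by this ``estimate on each side and cancel the interface terms'' mechanism, and recover $\tilde q$ from the transport identity rather than from a $t$-derivative at a flat bottom slice, your outline becomes essentially the paper's proof.
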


It follows from \eqref{assumption1b-ACR2019} that, for example, any metric of the form 
\[
g(x) = \left( \begin{array}{cc} I_{n-1} + h(x) & 0 \\ 0 & 1 \end{array} \right),
\]
where $h \in C^{\infty}_c(\mR^n, \mR^{(n-1) \times (n-1)})$ is supported in $\overline{B}$ and $\norm{h}_{C^2}$ is sufficiently small, satisfies \eqref{asm:gce-ACR2019}, \eqref{assumption1-ACR2019} and \eqref{ass2-ACR2019}.

Moreover, if $(y',y_n)$ are as in \eqref{assumption1b-ACR2019} and if $g$ and $q$ satisfy the symmetry properties 
\begin{align} 
g(y',-y_n) &= g(y',y_n), \label{assumptiong-ACR2019} \\
 q(y',-y_n) &= q(y',y_n), \label{assumptionq-ACR2019}
\end{align}
then $q$ is uniquely determined by a single measurement $u_q^+|_{\partial B \times [-1,T]}$. This follows immediately from Theorem \ref{thm:mst-ACR2019} since $u_q^-(y',y_n,t) = u_q^+(y',-y_n,t)$ (it is enough to note that $U = U_q^+(y',-y_n,t)$ solves $(\partial_t^2 - \Delta_g + q) U = 0$ in $\R^{n+1}$ with $U = \delta(t+x_n)$ for $t < -1$, using \eqref{assumption1b-ACR2019}, \eqref{assumptiong-ACR2019} and \eqref{assumptionq-ACR2019}).

\begin{thm} \label{thm:One1-ACR2019}
Let $g$ be a smooth Riemannian metric on $\R^n$, $n \geq 2$, satisfying \eqref{asm:gce-ACR2019}, \eqref{assumption1-ACR2019},  \eqref{ass2-ACR2019} and \eqref{assumptiong-ACR2019}.
There is $T > 0$ such that for any $q_1, q_2 \in C^{\infty}_c(\R^n;\R)$ supported in $\overline{B}$ and satisfying \eqref{assumptionq-ACR2019}, if one has 
\[
u_{q_1}^{+}|_{\partial B \times [-1,T]} = u_{q_2}^{+}|_{\partial B \times [-1,T]},
\]
then $q_1 = q_2$.
\end{thm}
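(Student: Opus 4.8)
The plan is to derive Theorem \ref{thm:One1-ACR2019} as an immediate consequence of Theorem \ref{thm:mst-ACR2019}, following exactly the reduction sketched in the paragraph preceding the statement. So the only thing to verify is that under the symmetry hypotheses \eqref{assumptiong-ACR2019} and \eqref{assumptionq-ACR2019} the single datum $u_q^+|_{\partial B\times[-1,T]}$ actually encodes both data $u_q^\pm|_{\partial B\times[-1,T]}$, after which Theorem \ref{thm:mst-ACR2019} applies verbatim to conclude $q_1=q_2$.

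First I would set up the reflection. Work in the global coordinates $y=(y',y_n)$ from \eqref{assumption1b-ACR2019}, in which $g$ has the block form $\mathrm{diag}(g_0(y),1)$ and $\partial_{y_n}=\partial_{x_n}$ outside $B$; note $\Delta_g$ expressed in these coordinates involves $\partial_{y_n}$ only through $\partial_{y_n}^2$ and through $y_n$-derivatives of $|g|^{1/2}g_0$, so by \eqref{assumptiong-ACR2019} the operator $\Delta_g$ commutes with the reflection $R\colon (y',y_n)\mapsto(y',-y_n)$. Define $V(y',y_n,t):=U_q^+(y',-y_n,t)$. Since $U_q^+$ solves $(\partial_t^2-\Delta_g+q)U_q^+=0$, applying $R$ and using that $\Delta_g$ is $R$-invariant and that $q\circ R=q$ by \eqref{assumptionq-ACR2019}, we get $(\partial_t^2-\Delta_g+q)V=0$ in $\mathbb{R}^{n+1}$. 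For $t<-1$ we have, outside $B$, $U_q^+=\delta(t-x_n)=\delta(t-y_n)$, hence $V|_{\{t<-1\}}=\delta(t+y_n)$; this is precisely the incoming data for $U_q^-$. By the uniqueness in \eqref{eq:1-ACR2019} (as recorded before Lemma \ref{lem:dp-ACR2019}), $V=U_q^-$ on $\overline B\times\mathbb{R}$. Comparing the progressing-wave expansions
\[
U_q^+(y,t)=\delta(t-\omega(y))+u_q^+(y,t)H(t-\omega(y)),\qquad
U_q^-(y,t)=\delta(t+\omega(y))+u_q^-(y,t)H(t+\omega(y)),
\]
together with the fact that under \eqref{assumption1-ACR2019}, \eqref{assumptiong-ACR2019} one has $\omega\circ R=\omega$ (both equal $y_n$ to infinite order on $\partial B$ and satisfy $|\mathrm{d}\omega|_g=1$; by the uniqueness in Lemma \ref{lemma_omega_equivalence} or a short eikonal argument they coincide), matching singular parts and then the Heaviside coefficients gives $u_q^-(y',y_n,t)=u_q^+(y',-y_n,t)$ on $\overline B\times\mathbb{R}$.

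Now I would close the argument. Given $q_1,q_2\in C^\infty_c(\mathbb{R}^n;\mathbb{R})$ supported in $\overline B$ and satisfying \eqref{assumptionq-ACR2019}, suppose $u_{q_1}^+|_{\partial B\times[-1,T]}=u_{q_2}^+|_{\partial B\times[-1,T]}$, with $T$ the constant furnished by Theorem \ref{thm:mst-ACR2019}. Restricting the identity $u_{q_j}^-(y',y_n,t)=u_{q_j}^+(y',-y_n,t)$ to $\partial B\times[-1,T]$, and observing that $R$ maps $\partial B$ to itself, we obtain $u_{q_1}^-|_{\partial B\times[-1,T]}=u_{q_2}^-|_{\partial B\times[-1,T]}$ as well. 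Thus both hypotheses of Theorem \ref{thm:mst-ACR2019} are met, and that theorem yields $q_1=q_2$.

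The only genuinely delicate point is the identity $u_q^-(y',y_n,t)=u_q^+(y',-y_n,t)$, i.e.\ verifying carefully that $\Delta_g$ is invariant under $R$ in the coordinates \eqref{assumption1b-ACR2019} (so that the reflected function solves the right wave equation) and that the phase functions satisfy $\omega\circ R=\omega$; once these are in place the reduction to Theorem \ref{thm:mst-ACR2019} is automatic, so there is no real obstacle beyond this bookkeeping — in fact the reflection invariance of $\Delta_g$ is exactly why the symmetry assumption \eqref{assumptiong-ACR2019} was imposed.
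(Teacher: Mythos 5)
Your overall route is exactly the paper's: the paper proves Theorem \ref{thm:One1-ACR2019} in the paragraph preceding its statement by precisely this reflection argument (the function $U:=U_q^+(y',-y_n,t)$ solves $(\partial_t^2-\Delta_g+q)U=0$ with $U=\delta(t+x_n)$ for $t<-1$ by \eqref{assumption1b-ACR2019}, \eqref{assumptiong-ACR2019}, \eqref{assumptionq-ACR2019}, hence equals $U_q^-$ by uniqueness, giving $u_q^-(y',y_n,t)=u_q^+(y',-y_n,t)$, after which Theorem \ref{thm:mst-ACR2019} finishes). So the reduction itself is the intended one.

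The one genuinely incorrect statement is the phase symmetry: the correct relation is $\omega\circ R=-\omega$, not $\omega\circ R=\omega$. In the coordinates of \eqref{assumption1b-ACR2019} one may take $\omega(y)=y_n$ (this is exactly how \eqref{assumption1b-ACR2019} implies \eqref{assumption1-ACR2019} in Lemma \ref{lemma_omega_equivalence}), so $\omega(y',-y_n)=-y_n=-\omega(y)$; equivalently, $\omega\circ R$ solves the eikonal equation (using the $R$-invariance of $g$) but with boundary values $-x_n$ on $\partial B$, so your parenthetical justification (``both equal $y_n$ to infinite order on $\partial B$'') is also false, and uniqueness identifies $\omega\circ R$ with $-\omega$, not with $\omega$. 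The sign matters at exactly the step where you invoke it: the reflected solution $V(y,t)=U_q^+(Ry,t)$ has singular part $\delta\bigl(t-\omega(Ry)\bigr)$, and it is only the relation $\omega(Ry)=-\omega(y)$ that turns this into $\delta\bigl(t+\omega(y)\bigr)$, the singular part of $U_q^-$; with $\omega\circ R=\omega$ as you wrote it, the wavefronts $\{t=\omega\}$ and $\{t=-\omega\}$ would be different surfaces and the ``matching of singular parts'' could not be carried out. Once the sign is corrected, the matching of the $\delta$- and $H$-coefficients gives the identity $u_q^-(y',y_n,t)=u_q^+(y',-y_n,t)$ and the restriction to $\partial B\times[-1,T]$ (where your unverified assertion that $R$ preserves $\partial B$ is indeed true under \eqref{assumptiong-ACR2019}, though neither you nor the paper spells this out) reduces everything to Theorem \ref{thm:mst-ACR2019}, exactly as in the paper.
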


Theorems \ref{thm:mst-ACR2019} and \ref{thm:One1-ACR2019} come with Lipschitz stability estimates (see Section \ref{sec:ip2-ACR2019} for the precise statement). As in \cite{RSfixed2019} it is also possible to prove uniqueness and stability results in the single measurement case when the odd part of $q_1 - q_2$ with respect to the reflection $(y',y_n) \mapsto (y',-y_n)$ is small compared to $q_1 - q_2$. We omit the details.

\subsection{Discussion and connection to the existing results} \label{subsec:DiscConn-ACR2019}

The fixed angle inverse scattering problem is a formally determined nonlinear inverse problem in any dimension $n \geq 1$. 
It can be studied either in the frequency domain, as the problem of determining $q$ from the scattering amplitude $a_q(\,\cdot\,,\omega,\,\cdot\,)$ for the Schr\"odinger operator $-\Delta + q$ with a fixed direction $\omega \in S^{n-1}$, or as the problem of recovering $q$ from boundary or scattering measurements of the solution $U_q$ of the wave equation. 
The equivalence of these two problems is proved in detail in \cite{RSfixed2019} (see also \cite{Melrose,  Uhlmann_backscattering,  MelroseUhlmann_bookdraft} for the case of odd dimensions).

The one-dimensional case is classical and has been thoroughly studied, see \cite{Marchenko, DeiftTrubowitz}. 
When $n \geq 2$ there are results on uniqueness for small or generic potentials \cite{Stefanov_generic, BarceloEtAl}, recovery of  singularities \cite{Ruiz, Meronno_thesis}, and recovery of the zero potential \cite{BaylissLiMorawetz}. 
Recently it was proved in \cite{RSfixed2019March, RSfixed2019} that measurements corresponding to two opposite fixed angles uniquely determine a general smooth compactly supported potential. 
The problem with one measurement is still open in general, but uniqueness was proved in \cite{RSfixed2019March, RSfixed2019} for potentials satisfying certain symmetry conditions or horizontal control conditions (analogous to angularly controlled potentials in backscattering, see \cite{RakeshUhlmann}). 
One also has Lipschitz stability estimates for the wave equation version of the problem. 
We also mention the work \cite{MeronnoPotencianoSalo} which studies the fixed angle problem with first order coefficients.

All the above results consider the Euclidean metric. In this paper we extend the approach of \cite{RSfixed2019March, RSfixed2019} to Riemannian metrics satisfying the conditions \eqref{asm:gce-ACR2019}, \eqref{assumption1-ACR2019} and \eqref{ass2-ACR2019}. 
The argument follows the Bukhgeim-Klibanov method, see \cite{BukhgeimKlibanov} and \cite{IY01}, on the use of Carleman estimates for formally determined inverse problems. 
Further information about this method and its variants may be found in  \cite{Kh89, Bu00, Be04, Is06, LCW2009, Kl13, SU13, BY17}. We also refer to \cite{Uh00} for inverse scattering problems and to \cite{BY17} for the Bukhgeim-Klibanov method for Riemannian metrics.

We now briefly describe the argument. 
The idea is that if $u_{q_1}|_{\p B \times [-1,T]} = u_{q_2}|_{\p B \times [-1,T]}$, then $\tilde u = u_{q_1} - u_{q_2}$ solves the wave equation 
\begin{equation} \label{wave_eq_subtracted}
(\p_t^2 - \Delta_g + q_1) \tilde u = -(q_1-q_2)(x) u_2(x,t)
\end{equation}
in a modified space-time cylinder $Q_+$ with zero lateral boundary values. By looking at the exterior problem, also $\p_{\nu} \tilde u$ vanishes at the lateral boundary.

Now if the right hand side of \eqref{wave_eq_subtracted} were zero, one could use the unique continuation principle for the wave equation (proved by using Carleman estimates) to conclude that $\tilde u=0$ and hence $q_1 = q_2$. 
However, the right hand side source is not zero, but it has the special form $-\tilde q(x) u_2(x,t)$ where $\tilde q = q_1 - q_2$. This source is of the same form as in the Bukhgeim-Klibanov method. 
We then follow this method and apply a Carleman estimate with pseudoconvex weight that is large at the bottom of the cylinder $Q_+$, which effectively converts the source term in \eqref{wave_eq_subtracted} into a boundary term at the bottom of $Q_+$. 
(In effect, this means that we still treat the problem as some kind of unique continuation problem, even if the right hand side source does not vanish.) 
The final issue is that the Carleman estimate has other boundary terms at the bottom of $Q_+$ with unfavorable signs. 
To deal with this, we use a second measurement from direction $-e_n$ and do a reflection argument which will cancel the boundary terms and allow us to conclude that $\tilde u =0$ and thus $q_1 = q_2$.

In order to implement the above approach in the Riemannian setting, we need the metric to satisfy the three conditions \eqref{asm:gce-ACR2019}, \eqref{assumption1-ACR2019} and \eqref{ass2-ACR2019}. 
We now describe the role of these assumptions in more detail.
\begin{itemize}
\item 
The condition \eqref{asm:gce-ACR2019} guarantees that the plane waves $\delta(t\mp x_n)$ solve the wave equation for $t \ll 0$, and that geodesics outside $B$ are straight lines.
\item 
The assumption \eqref{assumption1-ACR2019} imposes both a symmetry condition and an absence of caustics condition on the metric, which may be seen from the equivalent conditions \eqref{assumption1a-ACR2019} and \eqref{assumption1b-ACR2019}. The symmetry is required for the reflection argument mentioned above.
\item 
The assumption \eqref{ass2-ACR2019} on the existence of a strictly convex function in $(\ol{B}, g)$ ensures that Carleman estimates with pseudoconvex weights are available for the wave equation. This assumption is discussed in \cite[Lemma 2.1]{PaternainSaloUhlmannZhou} when $(\overline{B},g)$ has strictly convex boundary: in particular \eqref{ass2-ACR2019} is valid when the sectional curvature does not change sign or there are no focal points. Clearly \eqref{ass2-ACR2019} is stable under small perturbations of the metric $g$.

\end{itemize}

This article is organized as follows. Section \ref{sec:intro-ACR2019} is the introduction and states the main results. 
Section \ref{sec:DeRie-ACR2019} introduces some notations and formulates the solution of the direct problem.
In Section \ref{sec:energy-ACR2019}, certain energy estimates are given.
\sq{Certain necessary Carleman estimates are treated in Section \ref{sec:CarlEst-ACR2019}.}
Section \ref{sec:ip2-ACR2019} is devoted to the recovery of the potential using two measurements. The equivalence of the conditions \eqref{assumption1-ACR2019}, \eqref{assumption1a-ACR2019} and \eqref{assumption1b-ACR2019} is proved in Appendix \ref{sec_equivalence}.

\subsection*{Acknowledgements}
\sq{We would like to thank the anonymous referee for several helpful comments that have improved the presentation.} Both authors were supported by the Academy of Finland (Finnish Centre of Excellence in Inverse Modelling and Imaging, grants 284715 and 309963). M.S.\ was also supported by ERC under Horizon 2020 (ERC CoG 770924).

\section{The analysis of the forward problem} \label{sec:DeRie-ACR2019}

In this section, we focus on the direct problem \eqref{eq:1-ACR2019}.
The existence of a solution is shown by the progressing wave expansion, and uniqueness is guaranteed by standard energy estimates.
We also present some basic properties of the solution which will be useful in solving the inverse problem.

\subsection{Notation and preliminaries} \label{subsec:PreRieGeo-ACR2019}

We adopt the conventional notation for the Dirac delta distribution $\delta$ and its derivative $\delta'$, i.e.~\(
(\delta,\varphi) = \varphi(0),
\)
and
\(
(\delta',\varphi) = -\varphi'(0).
\)
We denote the divergence operator as $\divr$, i.e.~\(
\divr (\alpha_k \df x^k) := |g|^{-1/2} \partial_j(|g|^{1/2} g^{jk} \alpha_k),
\)
and the Laplace-Beltrami operator $\Delta_g$ is defined as
\(
\Delta_g u := \divr (\df u) = \abs{g}^{-1/2} \p_j(\abs{g}^{1/2} g^{jk} \p_k u),
\)
as already mentioned in Section \ref{sec:intro-ACR2019}.
When $j \geq 0$, we define a one-dimensional distribution $H_j$ as
\begin{equation*}
	H_j(s) = 
	\begin{cases}
	s^j, & s \geq 0, \\
	0, & s < 0.
	\end{cases}
\end{equation*}
We also write $H_{-1}(s) := \delta(s)$, $H_{-2}(s) := \delta'(s)$ and $H_{-3}(s) := \delta''(s)$.
One should note that 
\begin{equation*}
	H_j'(s) = C_j H_{j-1}(s),
\end{equation*}
where $C_j = j$ when $j \geq 1$ and $C_j = 1$ when $j \leq 0$.

The $g$-inner product is denoted as $\agl[\df u, \df v] := g^{jk} (\partial_j u) (\partial_k v)$ and $\agl[\p_j, \p_k] |_p := g_{jk}(p)$.
The following equalities hold for sufficiently smooth functions:
\begin{subequations} \label{eq:rel14-ACR2019}
	\begin{numcases}{}
		\Delta_g (u v) 
		= \Delta_g u \cdot v + 2\agl[\df u, \df v] + u \cdot \Delta_g v,  \label{eq:relation5-ACR2019} \\
		\divr (u \!\dif v) 
		= u \cdot \Delta_g v + \agl[\df u, \df v],  \label{eq:relation1-ACR2019} \\
		\df \big( H_j(t - v(x)) \big)
		= -\df v H_j'(t-v), \label{eq:relation3-ACR2019} \\
		\Delta_g \big( H_j(t - v(x)) \big)
		= - (\Delta_g v) H_j'(t-v) + \agl[\df v, \df v] \ddot H_j(t-v). \label{eq:relation4-ACR2019}
	\end{numcases}
\end{subequations}

%
%
%
%
%
%
%
%
%
%
%
%

\subsection{Progressing wave expansion} \label{subsec:ExtSolRie-ACR2019}

Consider an incoming plane wave $\delta(t - x_n)$, and the corresponding solution of the wave equation 
\begin{equation} \label{eq:2-ACR2019}
\begin{cases}
(\partial_t^2 - \Delta_g + q)U &= 0 \text{ in $\R^{n+1}$}, \\
U|_{\{ t < -1\}} &= \delta(t - x_n).
\end{cases}
\end{equation}
If the underlying metric $g$ is Euclidean, the solution of this equation will have the form $U(x,t) = \delta(t - x_n) + u(x,t) H_0(t-x_n)$.
However, when $g$ is general Riemannian metric, the shape of the incident wave will be perturbed when passing through the support of $q$. We can represent the solution of \eqref{eq:2-ACR2019} by using the progressing wave expansion method (cf.~\cite{MelroseUhlmann_bookdraft}).

The progressing wave expansion method assumes that the solution has the form
\begin{equation} \label{eq:Uasm-ACR2019}
	U(x,t) = \sum_{j \geq -1}^N a_j(x, t) \cdot H_j (t - \omega(x)) + R_N(x,t), \quad (x,t) \in B \times \R,
\end{equation}
where $\omega(x) = x_n$ when $x_n < -1$, $a_{-1}(x,t) = 1$ when \sq{$x_n < -1$, $a_j(x,t) = 0$ when $x_n < -1$ for $j \geq 0$,} and $R_N$ is the remainder term which is more regular than the other terms. 
We only consider the representation \eqref{eq:Uasm-ACR2019} for $(x,t) \in B \times \R$.

\sq{In the progressing wave expansion one often assumes that the coefficients $a_j$ only depend on $x$, i.e.\ $a_j = a_j(x)$. The functions $a_j$ are in fact coefficients in a Taylor expansion along the surface $\{ (x, \omega(x)) \}$ (see e.g.~\cite{MelroseUhlmann_bookdraft}), and thus one can alternatively think that $a_j$ are functions of $(x,t)$ that satisfy $a_j(x,t) = a_j(x, \omega(x))$. Both points of view will be used in this article. In particular allowing $t$-dependence is convenient since the partial derivative ``$\partial_t$'' appears in equations \eqref{eq:Total-ACR2019} and \eqref{eq:TranEq-ACR2019}.}

According to \eqref{eq:rel14-ACR2019}, one can compute
\begin{align}
	& (\square_g + q) \big( U(x,t) \big) \nonumber\\
	= & \sum_{j \geq -1}^N (\square_g + q) a_j \cdot H_j + \sum_{j \geq -2}^{N-1} 2 C_{j+1} [\partial_t a_{j+1} + \agl[\df a_{j+1}, \df \omega] + \frac {\Delta_g \omega} 2 a_{j+1}] \cdot H_j \nonumber\\
	& + \sum_{j \geq -3}^{N-2} C_{j+2} C_{j+1} a_{j+2} [1 - \agl[\df \omega, \df \omega]] H_j + (\square_g + q) R_N. \label{eq:Total0-ACR2019}
\end{align}
Combining \eqref{eq:Total0-ACR2019} and the fact $(\square_g + q) \big( U(x,t) \big) = 0$, and analyzing the smoothness of different terms, we see that $\omega$ should satisfy the eikonal equation 
\begin{equation} \label{eq:domega1-ACR2019}
	\agl[\df \omega, \df \omega] = 1 \text{ in $B$}.
\end{equation}
Recall that one should have $\omega(x) = x_n$ when $x_n < -1$. Thus we require that 
\begin{equation} \label{eq:domega2-ACR2019}
	\omega(x) = x_n \text{ to infinite order on $\p B \cap \{ x_n < 0 \}$.}
\end{equation}

Let us assume that there exists $\omega \in C^{\infty}(\ol{B})$ satisfying \eqref{eq:domega1-ACR2019}-\eqref{eq:domega2-ACR2019} (this is true under a suitable no caustics assumption, cf.~condition \eqref{assumption1-ACR2019}). 
Then \eqref{eq:Total0-ACR2019} becomes 
\begin{align}
	& (\square_g + q) \big( U(x,t) \big) \nonumber\\
	= & \sum_{j \geq -1}^{N-1} \big\{ 2 C_{j+1} [\partial_t a_{j+1} + \agl[\df a_{j+1}, \df \omega] + \frac {\Delta_g \omega} 2 a_{j+1}] + (\square_g + q) a_j \big\} \cdot H_j \nonumber\\
	& + 2 C_{-1} [\partial_t a_{-1} + \agl[\df a_{-1}, \df \omega] + \frac {\Delta_g \omega} 2 a_{-1}] \cdot H_{-2} + (\square_g + q) a_N \cdot H_N + (\square_g + q) R_N. \label{eq:Total-ACR2019}
\end{align}
Denote
\begin{equation} \label{eq:Gammag-ACR2019}
\Gamma_g := \{ (x,t) \in \ol{B} \times \mR \,;\, t = \omega(x) \}.
\end{equation}
By analyzing the smoothness of different terms on the right-hand-side of \eqref{eq:Total-ACR2019}, 
the condition $(\square_g + q) U(x,t) = 0$ gives the following transport equations:
\begin{equation} \label{eq:TranEq-ACR2019}
	\begin{cases}
	\partial_t a_{-1} + \agl[\df \omega, \df a_{-1}] + \frac {\Delta_g \omega} 2 a_{-1} = 0, & \text{on} \quad \Gamma_g, \\
	\partial_t a_j + \agl[\df \omega, \df a_j] + \frac {\Delta_g \omega} 2 a_j = - \frac {(\square_g + q) a_{j-1}} {2j}, & \text{on} \quad \Gamma_g, \ 0 \leq j \leq N.
	\end{cases}
\end{equation}
Note that since each $a_j$ only lives on $\Gamma_g$, it is enough to solve the equations on $\Gamma_g$.

These transport equations in \eqref{eq:TranEq-ACR2019} are on the hypersurface $\Gamma_g$ and can be solved
in a way similar to solving the traditional transport equation of the form $(\p_t  + \vec a \cdot \nabla)\varphi = 0$, 
i.e.~by exploiting the integral curves of the differential operator $X := \p_t + \agl[\df \omega, \df \cdot]$ which is tangential to $\Gamma_g$. 

For later purposes we also define differential operators $X^m~(1 \leq m \leq n)$ as follows:
\begin{equation} \label{eq:vfX-ACR2019}
X^m |_{(x,t)} := \partial_j \omega(x) h^{jm}(x) \partial_t |_{(x,t)} + h^{mk}(x) \partial_k |_{(x,t)},
\quad (x,t) \in \ol{B} \times \R,
\end{equation}
where 
$h$ equals to $g^{-1/2}$, i.e.~$h$ is the unique matrix satisfying $\sum_m h^{jm} h^{mk} = g^{jk}$. (These operators are not necessary for the proofs, but they are available since we have global coordinates on $\Rn \times \R$ and some computations are easier to carry out using them.) 
The matrix $h$ is real-valued and symmetric.
From the invertibility of $h$ we see that $X^m$ is nonzero everywhere.
The notation $X^m |_{(x,t)}$ means the value of the vector field $X^m$ at point $(x,t)$.
For simplicity we also write $X^m |_{(x,t)}$ as $X_{(x,t)}^m$.
It can be checked that $X^m$ is tangential to $\Gamma_g$.

\begin{prop} \label{prop:XTang-ACR2019}
	The vector field $X^m$ defined as in \eqref{eq:vfX-ACR2019} is tangential to $\Gamma_{g}$ defined in \eqref{eq:Gammag-ACR2019}.
\end{prop}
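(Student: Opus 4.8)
The plan is to recognize that $\Gamma_g$ is the zero set of the smooth function
\[
F(x,t) := t - \omega(x), \qquad (x,t) \in \ol{B} \times \R,
\]
whose differential $\df F = \df t - \df \omega$ never vanishes, since its $\partial_t$-component equals $1$. Consequently $\Gamma_g$ is a smooth hypersurface, and a vector field $V$ on $\ol{B} \times \R$ is tangential to $\Gamma_g$ if and only if $VF$ vanishes on $\Gamma_g$. In fact I will check the stronger identity $X^m F \equiv 0$ on all of $\ol{B} \times \R$, which in particular covers the boundary part of $\Gamma_g$ lying over $\p B$.

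The computation is then immediate from the definition \eqref{eq:vfX-ACR2019}: since $\partial_t(t-\omega) = 1$ and $\partial_k(t-\omega) = -\partial_k \omega$,
\[
X^m F \big|_{(x,t)} = \partial_j \omega(x)\, h^{jm}(x)\,\partial_t(t-\omega) + h^{mk}(x)\,\partial_k(t-\omega) = \partial_j \omega(x)\, h^{jm}(x) - h^{mk}(x)\,\partial_k \omega(x).
\]
Because $h = g^{-1/2}$ is symmetric, $h^{jm} = h^{mj}$, so after relabeling the summation index $j \leftrightarrow k$ in the first term the two contributions cancel and $X^m F \equiv 0$. Hence $X^m$ is tangential to $\Gamma_g$.

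There is no genuine obstacle here; the proof is a one-line verification. The only points to make explicit are the symmetry of $h$ — which was recorded just after \eqref{eq:vfX-ACR2019} — and the non-vanishing of $\df F$, which is what guarantees that the algebraic identity $X^m F = 0$ really encodes geometric tangency along the whole of $\Gamma_g$.
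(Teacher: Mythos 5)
Your proof is correct and follows essentially the same route as the paper: both verify that $X^m$ annihilates the defining function $t-\omega(x)$ of $\Gamma_g$, with the cancellation coming from the symmetry of $h$ (which the paper records right after \eqref{eq:vfX-ACR2019} and uses implicitly). Your added remarks on the non-vanishing of the differential of the defining function only make the tangency conclusion more explicit.
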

\begin{proof}
	Let $\mathbb H(x,t) = t - \omega(x)$, then $\mathbb H$ is the defining function of the hypersurface $\Gamma_{g}$. 
	The conclusion can be reached by the following computation,
	\begin{align*}
	X^m |_{(x,\omega(x))} \mathbb H 
	& = \partial_j \omega(x) h^{jm}(x) \partial_t |_{(x,t)} \mathbb H + h^{mk}(x) \partial_k |_{(x,t)} \mathbb H \\
	& = \partial_j \omega(x) h^{jm}(x) - h^{mk}(x) \partial_k \omega(x) = 0.
	\end{align*}
	Hence $X^m$ is tangential to $\Gamma_{g}$.
\end{proof}

The vector field $X = \p_t + \agl[\df \omega, \df \cdot]$ is a linear combination of $\{X^m\}_{m = 1}^n$,
\begin{align} 
X |_{(x,t)} 
& := \partial_k \omega(x) h^{km}(x) X^m |_{(x,t)} = \partial_t |_{(x,t)} + g^{jk}(x) \partial_j \omega(x) \partial_k |_{(x,t)}.\label{eq:XDef-ACR2019} 
\end{align}
Thus $X$ is also tangential to $\Gamma_g$.
\sq{With slight ambiguity} we may also denote the restriction of $X$ to $\Gamma_g$ as $X$. 
Hence, there exist integral curves of $X$ in $\Gamma_g$ and we denote them as
\begin{equation} \label{eq:IntCur-ACR2019}
\gamma_{(x,\omega(x))} \colon t \in \R \ \mapsto \ \gamma_{(x,\omega(x))} (t) \in \Gamma_g \subset \R^{n+1},
\end{equation}
where $\gamma_{(x,\omega(x))}(0) = (x,\omega(x))$ and
$(\gamma_{(x,\omega(x))})_*(\frac {\df} {\df s} |_s) = X |_{\gamma_{(x,\omega(x))}(s)}$. 
The subscript ``${}_*$'' and superscript ``${}^*$'' stand for the push-forward and pull-back operation, respectively.
In what follows, we may omit the subscript $(x,\omega(x))$, so $\gamma(0) = (x,\omega(x))$ and $\gamma_*(\frac {\df} {\df s} |_s) = X |_{\gamma(s)}$ for short.
Now for any smooth enough function $f$ on $\Gamma_g$ we have
\begin{equation} \label{eq:XftoGamma-ACR2019}
	X |_{\gamma(s)} f = \gamma_*(\frac {\df} {\df s} \Big|_s)f = \frac {\df} {\df s} \Big|_s (f \circ \gamma).
\end{equation}

With the help of \eqref{eq:XDef-ACR2019} and \eqref{eq:XftoGamma-ACR2019}, we can rewrite \eqref{eq:TranEq-ACR2019} as
\begin{subequations} \label{eq:TranEqX-ACR2019}
	\begin{numcases}{}
	\frac {\df} {\df s} \Big|_s (a_{-1} \circ \gamma) + \frac {(\Delta_g \omega) \circ \gamma} 2 (a_{-1} \circ \gamma) = 0, \label{eq:aj-1-ACR2019} \\
	\frac {\df} {\df s} \Big|_s (a_j \circ \gamma) + \frac {(\Delta_g \omega) \circ \gamma} 2 (a_j \circ \gamma) = - \frac {[(\square_g + q) a_{j-1}] \circ \gamma} {2}, \label{eq:a0-ACR2019}
	\end{numcases}
\end{subequations}
for $0 \leq j \leq N$, where $\gamma$ denotes $\gamma_{(x,\omega(x))}$ and $\gamma(0) = (x,\omega(x))$.
Now \eqref{eq:aj-1-ACR2019} can be solved and solution is
\begin{align}
a_{-1} \circ \gamma(s) 
& = a_{-1} \circ \gamma(-\infty) \, e^{-\frac 1 2 \int_{-\infty}^s \Delta_g \omega(\gamma(\tau)) \dif \tau} \nonumber\\
& = e^{-\frac 1 2 \int_{-\infty}^s \Delta_g \omega(\gamma(\tau)) \dif \tau}. \label{eq:a-1-ACR2019}
\end{align}
\sq{Readers should note that $a_{-1}$ depends on $\omega$, but is independent of $q$.}
We used the fact $a_{-1} \circ \gamma(-\infty) = 1$.
Note that $\omega$ is independent of time $t$, and we used $\omega(\gamma(\tau))$, i.e.~$\omega(x,\omega(x))$, with slight ambiguity to represent $\omega(x)$ where $\gamma(\tau) = (x,\omega(x)) \in \mathbb{R}^n \times \R$. We use a similar convention for $q(\gamma(\tau))$.
We can solve every $a_j$ from \eqref{eq:a0-ACR2019} in a similar way as we solve $a_{-1}$ in \eqref{eq:a-1-ACR2019}.
We obtained the existence of the solution.

\subsection{Representation of the solution} \label{subsec:DevSolRie-ACR2019}

The form \eqref{eq:Uasm-ACR2019} is not the most convenient way to use to study the inverse problem.
\sq{And also, in Section \ref{subsec:ExtSolRie-ACR2019} we have seen that the coefficient $a_{-1}$ can be regarded as being independent of $t$, thus for the representation of the solution, we will rather write $a_{-1}$ bluntly as $a_{-1}(x)$.}
Now we are ready to give the formulation of the solution of the direct problem.

\begin{lem} \label{lem:dp-ACR2019}
There exists a unique solution of \eqref{eq:2-ACR2019}. In the set $B \times \R$ the solution can be represented as
\begin{equation} \label{eq:U-ACR2019}
U(x,t) = a_{-1}(x) \cdot \delta (t - \omega(x)) + u(x,t) \cdot H (t - \omega(x))
\end{equation}
where $a_{-1}$ is given by \eqref{eq:a-1-ACR2019}.
Moreover, \sq{on $\Gamma_g$,} $u$ satisfies
\begin{align} 
u(x,\omega(x))
& = -\frac 1 2 a_{-1}(x) \int_{-\infty}^0 \big[ \frac {-\Delta_g a_{-1}(\gamma(\tau))} {a_{-1}(\gamma(\tau))} + q(\gamma(\tau)) \big] \dif \tau, \label{eq:uExp-ACR2019} \\
X |_{(x,\omega(x))} \left( \frac {u} {a_{-1}} \right) (x,\omega(x))
& = - \frac 1 2 \left( \frac {-\Delta_g a_{-1}(x)} {a_{-1}(x)} + q(x) \right), \label{eq:Xgasu-ACR2019}
\end{align}
where the $\gamma$ is defined in \eqref{eq:IntCur-ACR2019}.
\end{lem}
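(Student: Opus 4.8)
The plan is to combine the progressing wave expansion developed above with standard well-posedness for the wave equation, and then to read off the two identities from the transport equations. For \emph{existence}, note that under \eqref{assumption1-ACR2019} the eikonal equation \eqref{eq:domega1-ACR2019}--\eqref{eq:domega2-ACR2019} has a solution $\omega\in C^\infty(\ol B)$, so the transport equations \eqref{eq:TranEq-ACR2019} can be solved recursively by integrating the ODEs \eqref{eq:TranEqX-ACR2019} along the integral curves of $X$, producing smooth coefficients $a_{-1},a_0,\dots,a_N$ for each $N$ (with $a_{-1}$ as in \eqref{eq:a-1-ACR2019} and $a_j=0$ for $x_n<-1$, $j\ge0$). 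I would set
\[
V_N(x,t)=a_{-1}(x)\,\delta(t-\omega(x))+\Big(\sum_{j=0}^N a_j(x)\,(t-\omega(x))^j\Big)H(t-\omega(x)),
\]
i.e.\ the expansion \eqref{eq:Uasm-ACR2019} with $R_N$ dropped. By \eqref{eq:Total-ACR2019} and the transport equations all $H_j$-terms with $-2\le j\le N-1$ cancel, leaving $(\square_g+q)V_N=(\square_g+q)a_N\cdot H_N(t-\omega(x))$, a function in $C^{N-1}(\mR^{n+1})$ that vanishes for $t\ll0$. Next I would take $R_N$ to be the solution of $(\square_g+q)R_N=-(\square_g+q)a_N\cdot H_N(t-\omega(x))$ with $R_N=0$ for $t\ll0$; by standard energy estimates $R_N\in C^{k(N)}(\mR^{n+1})$ with $k(N)\to\infty$ as $N\to\infty$. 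Since the source of $R_N$ is supported in $\{t\ge\omega(x)\}$ and $\Gamma_g=\{t=\omega(x)\}$ is a characteristic surface for $\square_g$ (this is where the eikonal equation is used), finite speed of propagation gives $\supp R_N\subseteq\{t\ge\omega(x)\}$. Then $U:=V_N+R_N$ solves \eqref{eq:2-ACR2019}; and since $R_N$ is continuous and vanishes on $\{t<\omega(x)\}$, we have $R_N=R_N\cdot H(t-\omega(x))$, so $U$ has the form \eqref{eq:U-ACR2019} with $u(x,t)=\sum_{j=0}^N a_j(x)(t-\omega(x))^j+R_N(x,t)$, which is $C^{k(N)}$ on $\{t\ge\omega(x)\}$.

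For \emph{uniqueness and smoothness}: if $U_1,U_2$ both solve \eqref{eq:2-ACR2019}, then $W=U_1-U_2$ solves $(\square_g+q)W=0$ with $W=0$ for $t<-1$, hence $W=0$ by the energy estimates of Section \ref{sec:energy-ACR2019}. In particular the representation \eqref{eq:U-ACR2019} is unique and independent of $N$: the coefficient of $\delta(t-\omega(x))$ must equal $a_{-1}$, and the restriction of $u$ to $\{t\ge\omega(x)\}$ must agree with the expression above for every $N$. Therefore $u$ is $C^{k(N)}$ for all $N$, i.e.\ $u\in C^\infty(\{t\ge\omega(x)\})$, and we set $u:=0$ on $\{t<\omega(x)\}$.

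For the \emph{two identities}: restricting $u$ to $\Gamma_g$ annihilates every term $(t-\omega)^j$ with $j\ge1$, while $R_N$ vanishes on $\Gamma_g$; hence $u(x,\omega(x))=a_0(x)$. Since $X$ is tangential to $\Gamma_g$ (Proposition \ref{prop:XTang-ACR2019}), $X(u/a_{-1})$ on $\Gamma_g$ depends only on $(u/a_{-1})|_{\Gamma_g}=a_0/a_{-1}$, and applying the quotient rule together with the transport equations $Xa_{-1}+\tfrac12(\Delta_g\omega)a_{-1}=0$ and $Xa_0+\tfrac12(\Delta_g\omega)a_0=-\tfrac12(\square_g+q)a_{-1}$ on $\Gamma_g$ (that is, \eqref{eq:aj-1-ACR2019} and \eqref{eq:a0-ACR2019} with $j=0$), the $\Delta_g\omega$-terms cancel and
\[
X\big(u/a_{-1}\big)(x,\omega(x))=-\frac{(\square_g+q)a_{-1}(x)}{2\,a_{-1}(x)}=-\frac12\Big(\frac{-\Delta_g a_{-1}(x)}{a_{-1}(x)}+q(x)\Big),
\]
using $\square_g a_{-1}=-\Delta_g a_{-1}$ since $a_{-1}$ does not depend on $t$; this is \eqref{eq:Xgasu-ACR2019}. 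To obtain \eqref{eq:uExp-ACR2019} I would integrate this identity along the integral curve $\gamma=\gamma_{(x,\omega(x))}$ using \eqref{eq:XftoGamma-ACR2019}: by the no caustics assumption $\gamma(s)$ enters $\{x_n<-1\}$ as $s\to-\infty$, where $a_0=0$, $a_{-1}=1$, $q=0$ and $\Delta_g a_{-1}=0$ (so the integrand is compactly supported in the parameter), which gives $(a_0/a_{-1})(x,\omega(x))=-\tfrac12\int_{-\infty}^0\big[-\Delta_g a_{-1}(\gamma(\tau))/a_{-1}(\gamma(\tau))+q(\gamma(\tau))\big]\dif\tau$; multiplying by $a_{-1}(x)$ and recalling $u(x,\omega(x))=a_0(x)$ yields \eqref{eq:uExp-ACR2019}.

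I expect the main obstacle to be the existence and regularity step: obtaining a controllable remainder $R_N$, locating its support beyond the wavefront $\Gamma_g$ (which is exactly why $\Gamma_g$ must be characteristic, i.e.\ why $\omega$ solves the eikonal equation), and then combining this with uniqueness to upgrade $u$ from merely $C^{k(N)}$ to $C^\infty$. Once \eqref{eq:U-ACR2019} is established, the two displayed formulas are routine --- a quotient-rule computation along $\Gamma_g$ and a single integration along an integral curve of $X$, the latter relying on the no caustics assumption so that the curve returns to $\{x_n<-1\}$.
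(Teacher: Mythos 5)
Your proof is correct, and your derivation of \eqref{eq:uExp-ACR2019}--\eqref{eq:Xgasu-ACR2019} is the same transport-equation computation as in the paper: since $u|_{\Gamma_g}=a_0$, your pair of equations for $a_{-1}$ and $a_0$ is exactly the paper's ODE \eqref{eq:vODE-ACR2019} for $u\circ\gamma$; the only (immaterial) difference is that the paper integrates first and then differentiates to get \eqref{eq:Xgasu-ACR2019}, while you do the quotient-rule computation first and then integrate. Where you genuinely differ is in how existence and the representation \eqref{eq:U-ACR2019} are justified. The paper subtracts $a_{-1}(x)\delta$ and obtains existence and uniqueness of the remainder in one stroke from the distributional well-posedness theorem \cite[Theorem 23.2.4]{Hormander3}, taking the structure and smoothness of $u$ essentially from the progressing-wave formalism of Section \ref{subsec:ExtSolRie-ACR2019} and \cite{MelroseUhlmann_bookdraft}. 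You instead build the truncated expansions $V_N$, control the remainders $R_N$ by energy estimates, localize $\supp R_N\subseteq\{t\ge\omega(x)\}$ by finite speed of propagation (the correct mechanism here is that the eikonal equation makes $\omega$ $1$-Lipschitz for $d_g$, so backward light cones from points below $\Gamma_g$ stay below $\Gamma_g$), and then bootstrap the smoothness of $u$ on $\{t\ge\omega(x)\}$ by letting $N\to\infty$ and invoking uniqueness. Your route is more self-contained and makes the $C^\infty$ regularity of $u$ explicit, at the cost of length; the paper's route is shorter because it outsources well-posedness to H\"ormander.

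One small repair: in your uniqueness step, the appeal to the energy estimates of Section \ref{sec:energy-ACR2019} is not quite the right tool, since those estimates are stated for $C^2$ functions on $Q_+$, whereas $W=U_1-U_2$ is a priori only a distribution (possibly carrying $\delta$-type singularities on $\Gamma_g$). Uniqueness of distributional solutions of $(\square_g+q)W=0$ vanishing for $t<-1$ should be quoted from \cite[Theorem 23.2.4]{Hormander3}, as the paper does, or proved by a duality argument; with that substitution your argument is complete.
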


\begin{proof}
The distribution $U(x,t) = a_{-1}(x) \delta(t-x_n) + V(x,t)$ solves \eqref{eq:2-ACR2019} if and only if $V$ solves 
\[
(\p_t^2 - \Delta_g + q) V = -(\p_t^2 - \Delta_g + q) \big( a_{-1}(x) \delta(t-x_n) \big)
\]
with $V = 0$ for $t < -1$. 
The right hand side is supported in $\{ t \geq -1 \}$, and hence the existence of a unique solution $V$ is guaranteed by \cite[Theorem 23.2.4]{Hormander3}.
With the help of \eqref{eq:aj-1-ACR2019}, from \eqref{eq:U-ACR2019} we obtain
\begin{align}
	(\square_g + q) \big( U(x,t) \big)
	& = (\square_g + q) u \cdot H  + [ 2\partial_t u + 2\agl[\df \omega, \df u] + \Delta_g \omega u + (\square_g + q) a_{-1} ]\cdot \delta. \label{eq:Total2-ACR2019}
\end{align}
Combining \eqref{eq:Total2-ACR2019} with \eqref{eq:XDef-ACR2019} and
\eqref{eq:XftoGamma-ACR2019}, and analyzing the smoothness of different terms on the right-hand-side of \eqref{eq:Total2-ACR2019}, 
the condition $(\square_g + q) U(x,t) = 0$ gives
\begin{subequations} \label{eq:TranEq2-ACR2019}
	\begin{numcases}{}
	\frac {\df} {\df s} \Big|_s (u \circ \gamma) + \frac {(\Delta_g \omega) \circ \gamma} 2 (u \circ \gamma) = - \frac {[(-\Delta_g + q) a_{-1}] \circ \gamma} {2}, \label{eq:b1-ACR2019} \medskip\\
	(\square_g + q) u = 0, \quad \text{in} \quad t > \omega(x). \label{eq:b2-ACR2019}
	\end{numcases}
\end{subequations}

Denote $u_\gamma := u \circ \gamma$, thus \eqref{eq:b1-ACR2019} gives
\begin{equation} \label{eq:vODE-ACR2019}
	 \frac {\df u_\gamma} {\df s} (s)  + \frac 1 2 \Delta_g \omega(\gamma(s)) \, u_\gamma(s) = -\frac 1 2 (\square_g + q) a_{-1}(\gamma(s)).
\end{equation}
Solving \eqref{eq:vODE-ACR2019} with the help of \eqref{eq:a-1-ACR2019} and the initial condition $u(x,t) = 0$ when $t \ll -1$, we arrive at
\begin{equation*}
u_\gamma(s) 
= -\frac 1 2 (a_{-1} \circ \gamma)(s) \int_{-\infty}^s \big[ \frac {-\Delta_g a_{-1}(\gamma(\tau))} {a_{-1}(\gamma(\tau))} + q(\gamma(\tau)) \big] \dif \tau,
\end{equation*}
and hence
\begin{equation*}
X |_{\gamma(s)} \left( \frac {u} {a_{-1}} \right)
= - \frac 1 2 \left( \frac {-\Delta_g a_{-1}(\gamma(s))} {a_{-1}(\gamma(s))} + q(\gamma(s)) \right).
\end{equation*}
The proof is complete.
\end{proof}

\section{Some energy estimates} \label{sec:energy-ACR2019}

In this section we prove several energy estimates, which extend corresponding results in \cite{RSfixed2019March, RSfixed2019} to the case of Riemannian metrics. Before that, we define some domains which we work on, 
and then present some preliminary lemmas.

\subsection{Geometry and preliminary lemmas} \label{subsec:geopre-ACR2019}

The underlying geometry and related domains are listed below.

\medskip

\begin{tabular}{lr}

	\begin{tikzpicture}[scale = 0.8, baseline=(current bounding box.center)]
	\pgfmathsetmacro{\LONG}{1.5}; 
	\pgfmathsetmacro{\SHORT}{0.3}; 
	\pgfmathsetmacro{\HEIGHT}{4}; 
	\pgfmathsetmacro{\LP}{1}; 
	\pgfmathsetmacro{\RP}{3}; 
	\coordinate (BL) at (-\LONG,0);
	\coordinate (BR) at (\LONG,0);
	\coordinate (TL) at (-\LONG,\HEIGHT);
	\coordinate (TR) at (\LONG,\HEIGHT);
	\draw (0,\HEIGHT) ellipse ({\LONG} and {\SHORT});
	\draw[dashed] (\LONG,0) arc(0:180:{\LONG} and {\SHORT});
	\draw (-\LONG,0) arc(180:360:{\LONG} and {\SHORT});
	\draw (BL) -- (TL);
	\draw (BR) -- (TR);
	\draw (-\LONG,\LP) .. controls (-0.2*\LONG,0.9*\LP + 0.1*\RP) and (0.2*\LONG,0.1*\LP + 0.9*\RP) .. ({\LONG},\RP);
	\draw[dashed] (-\LONG,\LP) .. controls (-0.7*\LONG,0.6*\LP + 0.4*\RP) and (0.1*\LONG,1.1*\RP) .. ({\LONG},\RP);
	\node[anchor=north west] at (-\LONG,0.9*\HEIGHT) {$Q_+$};
	\node[anchor=west] at (0.3*\LONG,1.1*\HEIGHT/2) {$\Gamma_{g}$};
	\node[anchor=south east] at (-\LONG,\LP*2) {$\Sigma_{+}$};
	\node at (0,\HEIGHT) {$\Gamma_{T}$};
	\node[anchor=south west] at (-\LONG,0.05*\HEIGHT) {$Q_-$};
	\node[anchor=south west] at (\LONG,\LP) {$\Sigma_{-}$};
	\node at (0,0) {$\Gamma_{-T}$};
	\draw[->] (0,1.15*\HEIGHT/2) -- (1,\HEIGHT/2-1) node[anchor=north]{$\vec n$};
	\end{tikzpicture}
	\label{fig:geo-ACR2019}

& \qquad \qquad

{$\begin{aligned}
& Q := B \times [-T,T], 
\quad \Sigma := \p B \times [T,T], \\
& Q_\pm := Q \cap \{ (x,t) \,;\, \pm(t - \omega(x)) > 0 \}, \\
& \Sigma_\pm := \Sigma \cap \{(x,t) \,;\, \pm(t - \omega(x)) > 0 \}, \\
& \Gamma_g = Q \cap \{ (x,t) \,;\, t = \omega(x) \}, \\
& \Gamma_{\pm T} := Q \cap \{ t = \pm T \}, \quad \Gamma_{\tau} := Q \cap \{ t = \tau \}, \\
& \vec n := (\nabla \omega(x),-1)/\sqrt{|\nabla \omega(x)|^2 + 1}.
\end{aligned}$}

\end{tabular}


The precise value of $T > 0$ shall be determined later. We will also consider a Riemannian metric $\tilde{g} = g \oplus 1$ on $\R^n \times \R$, and we denote by $g'$ the metric on $\Gamma_g$ induced by $\tilde{g}$.
Before we show the energy estimates, we first present some lemmas which are used in the proof of these energy estimates.

\begin{lem} \label{lem:Claim1Equ-ACR2019}
	For smooth enough functions $\alpha$ and $\beta$, we have three different representations of $\sum_{m = 1}^n (X^m \alpha) (X^m \beta)$,
	\begin{align}
		\sum_{m = 1}^n (X^m \alpha) (X^m \beta)
		& = \agl[\df \alpha,\df \beta] + \alpha_t \beta_t + \omega_j g^{jk} (\alpha_k \beta_t + \beta_k \alpha_t), \label{eq:Claim1Equ-ACR2019} \\
		\sum_{m = 1}^n (X^m \alpha) (X^m \beta)
		& = \agl[\df \alpha,\df \beta] - \alpha_t \beta_t + \alpha_t (X\beta) + \beta_t (X\alpha), \label{eq:Claim1Equ2-ACR2019} \\
\sum_{m = 1}^n (X^m \alpha) (X^m \beta)
& = (X \alpha)(X \beta) + \langle\df \alpha - \agl[\df \omega, \df \alpha] \df \omega, \df \beta - \agl[\df \omega, \df \beta \rangle \df \omega]. \label{eq:Claim1Equ3-ACR2019}
\	\end{align}
\end{lem}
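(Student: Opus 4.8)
The plan is to verify the three identities by direct computation from the definition of $X^m$ in \eqref{eq:vfX-ACR2019}, using repeatedly the key algebraic fact that $h$ is the symmetric square root of $g^{-1}$, i.e.\ $\sum_m h^{jm} h^{mk} = g^{jk}$.

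First I would establish \eqref{eq:Claim1Equ-ACR2019}. Writing $X^m \alpha = h^{jm}\omega_j \alpha_t + h^{mk}\alpha_k$ and similarly for $\beta$, I expand the product $(X^m\alpha)(X^m\beta)$ into four terms and sum over $m$. The pure-$t$ term gives $\sum_m h^{jm}h^{km}\omega_j\omega_k\,\alpha_t\beta_t = g^{jk}\omega_j\omega_k\,\alpha_t\beta_t = \abs{\df\omega}_g^2\,\alpha_t\beta_t$; by the eikonal equation \eqref{eq:domega1-ACR2019} this is exactly $\alpha_t\beta_t$. The spatial-spatial term gives $\sum_m h^{jm}h^{mk}\alpha_j\beta_k = g^{jk}\alpha_j\beta_k = \agl[\df\alpha,\df\beta]$, and the two cross terms give $\omega_j g^{jk}(\alpha_k\beta_t + \beta_k\alpha_t)$ after the same contraction. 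Collecting these yields \eqref{eq:Claim1Equ-ACR2019}. Then \eqref{eq:Claim1Equ2-ACR2019} follows from \eqref{eq:Claim1Equ-ACR2019} by substituting the definition of $X$ from \eqref{eq:XDef-ACR2019}: since $X\beta = \beta_t + g^{jk}\omega_j\beta_k$, one has $\omega_j g^{jk}\beta_k = X\beta - \beta_t$, so $\omega_j g^{jk}(\alpha_k\beta_t + \beta_k\alpha_t) = \alpha_t(X\beta) - \alpha_t\beta_t + \beta_t(X\alpha) - \alpha_t\beta_t$; adding $\agl[\df\alpha,\df\beta] + \alpha_t\beta_t$ gives the right-hand side of \eqref{eq:Claim1Equ2-ACR2019}.

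For \eqref{eq:Claim1Equ3-ACR2019} I would work on the space side only. Set $P^{jk} := g^{jk} - \omega^j\omega^k$ where $\omega^j := g^{jl}\omega_l$; using the eikonal equation this is the $g$-orthogonal projection off the direction $\df\omega$. The claim is that $\sum_m (X^m\alpha)(X^m\beta) = (X\alpha)(X\beta) + P^{jk}(\alpha_j - (\agl[\df\omega,\df\alpha])\omega_j)(\beta_k - (\agl[\df\omega,\df\beta])\omega_k)$, which is what the notation in \eqref{eq:Claim1Equ3-ACR2019} abbreviates. Starting from \eqref{eq:Claim1Equ-ACR2019}, I rewrite $\agl[\df\alpha,\df\beta] + \omega_jg^{jk}(\alpha_k\beta_t+\beta_k\alpha_t) + \alpha_t\beta_t$ and complete the square: one checks $(X\alpha)(X\beta) = (\alpha_t + \agl[\df\omega,\df\alpha])(\beta_t + \agl[\df\omega,\df\beta])$ expands to $\alpha_t\beta_t + \alpha_t\agl[\df\omega,\df\beta] + \beta_t\agl[\df\omega,\df\alpha] + \agl[\df\omega,\df\alpha]\agl[\df\omega,\df\beta]$, and since $\agl[\df\omega,\df\alpha] = \omega_jg^{jk}\alpha_k$ the first three of these match the corresponding pieces in \eqref{eq:Claim1Equ-ACR2019}; what remains is $\agl[\df\alpha,\df\beta] - \agl[\df\omega,\df\alpha]\agl[\df\omega,\df\beta]$, and a short computation using the eikonal equation shows this equals $P^{jk}(\alpha_j - (\agl[\df\omega,\df\alpha])\omega_j)(\beta_k - (\agl[\df\omega,\df\beta])\omega_k)$, which is the stated orthogonal-complement term.

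None of the three steps is genuinely hard; the only real obstacle is bookkeeping — keeping track of raised and lowered indices and of where the eikonal equation $\abs{\df\omega}_g = 1$ is being used, and making sure the slightly informal notation $\df\alpha - \agl[\df\omega,\df\alpha]\df\omega$ in \eqref{eq:Claim1Equ3-ACR2019} is interpreted consistently (as the $g$-orthogonal projection of $\df\alpha$ onto the hyperplane $g$-orthogonal to $\df\omega$). I would present \eqref{eq:Claim1Equ-ACR2019} in full since it is the foundation, then derive \eqref{eq:Claim1Equ2-ACR2019} and \eqref{eq:Claim1Equ3-ACR2019} from it by the algebraic manipulations above, quoting \eqref{eq:domega1-ACR2019} and the identity $\sum_m h^{jm}h^{mk} = g^{jk}$ at each use.
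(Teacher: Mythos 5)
Your proposal is correct and follows essentially the same route as the paper: direct expansion of $\sum_m (X^m\alpha)(X^m\beta)$ using $\sum_m h^{jm}h^{mk}=g^{jk}$ and the eikonal equation \eqref{eq:domega1-ACR2019} for \eqref{eq:Claim1Equ-ACR2019}, then algebraic rearrangement via $X\beta=\beta_t+\omega_jg^{jk}\beta_k$ for \eqref{eq:Claim1Equ2-ACR2019}, and extraction of $(X\alpha)(X\beta)$ with the leftover $\agl[\df\alpha,\df\beta]-\agl[\df\omega,\df\alpha]\agl[\df\omega,\df\beta]$ recognized as the $g$-inner product of the projected one-forms for \eqref{eq:Claim1Equ3-ACR2019}. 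Your explicit introduction of the projection tensor $P^{jk}=g^{jk}-\omega^j\omega^k$ is only a notational variant of what the paper writes directly.
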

\begin{proof}
	We compute the left-hand-side of \eqref{eq:Claim1Equ-ACR2019} as follows,
	\begin{align*}
	\sum_m (X^m \alpha) (X^m \beta)
	& = \sum_m (\omega_j h^{jm} \alpha_t + h^{mk} \alpha_k) (\omega_j h^{jm} \beta_t + h^{mk} \beta_k) \\
	& = \omega_j g^{jk} \omega_k \alpha_t \beta_t + \omega_j g^{jk} \alpha_t \beta_k + \omega_j g^{jk} \beta_t \alpha_k + \alpha_j g^{jk} \beta_k \\
	& = \alpha_t \beta_t + \agl[\df \alpha,\df \beta] + \omega_j g^{jk} (\alpha_k \beta_t + \beta_k \alpha_t).
	\end{align*}
	Note that we used the fact \eqref{eq:domega1-ACR2019}.
	The equation \eqref{eq:Claim1Equ-ACR2019} is proved.
	Then from \eqref{eq:Claim1Equ-ACR2019}, we can obtain \eqref{eq:Claim1Equ2-ACR2019} as follows,
	\begin{align*}
	\agl[\df \alpha,\df \beta] - \alpha_t \beta_t
	& = \sum_m (X^m \alpha) (X^m \beta) - 2\alpha_t \beta_t - \omega_j g^{jk} (\alpha_k \beta_t + \beta_k \alpha_t) \\
	& = \sum_m (X^m \alpha) (X^m \beta) - \alpha_t (\beta_t + \omega_j g^{jk} \beta_k) - \beta_t (\alpha_t + \omega_j g^{jk} \alpha_k) \\
	& = \sum_m (X^m \alpha) (X^m \beta) - \alpha_t (X\beta) - \beta_t (X\alpha).
	\end{align*}
\sq{To show} \eqref{eq:Claim1Equ3-ACR2019}, by \eqref{eq:Claim1Equ-ACR2019} and noting that $\agl[\df \omega, \df \omega] = 1$, we have	
\begin{align*}
\sum_{m = 1}^n (X^m \alpha) (X^m \beta)
& = \agl[\df \alpha,\df \beta] + (X \alpha)(X \beta) - \agl[\df \omega, \df \alpha] \agl[\df \omega, \df \beta], \\
& = (X \alpha)(X \beta) + \langle\df \alpha - \agl[\df \omega, \df \alpha] \df \omega, \df \beta - \agl[\df \omega, \df \beta \rangle \df \omega].
\end{align*}
	The proof is complete.
\end{proof}

\begin{lem} \label{lem:Claim2Equ-ACR2019}
	It holds \sq{that}
	\begin{equation} \label{eq:Claim2Equ-ACR2019}
	\sum_m |X^m \alpha|^2 \leq C \agl[\df \alpha,\df \overline \alpha]_{g'},
	\end{equation}
	where the $g'$ is the induced Riemannian metric over the hypersurface $\Gamma_g$, and the constant $C$ is independent of $\alpha$.
\end{lem}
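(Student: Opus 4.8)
The plan is to use the representation \eqref{eq:Claim1Equ3-ACR2019} from Lemma \ref{lem:Claim1Equ-ACR2019}, specialized to $\alpha = \beta = \overline{\alpha}$ in the appropriate conjugated sense, together with the observation that the quantity $\sum_m (X^m \alpha)(X^m \overline{\alpha})$ is built entirely from derivatives of $\alpha$ tangential to $\Gamma_g$. First I would recall that, by Proposition \ref{prop:XTang-ACR2019}, each $X^m$ is a vector field tangent to the hypersurface $\Gamma_g$; hence $X^m \alpha$ depends only on the restriction of $\alpha$ to $\Gamma_g$, and in fact $(X^m \alpha)(X^m \overline{\alpha})$ is a pointwise-defined quadratic expression in the tangential gradient $\df_{\Gamma_g} \alpha$ of $\alpha$ along $\Gamma_g$, with coefficients that are smooth functions on $\ol{B}$ (built from $h$, $g$, and $\df \omega$, all of which are smooth on the compact set $\ol{B}$).

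The key step is then a linear algebra / compactness argument: at each point $p \in \Gamma_g$, the map $v \mapsto \sum_m |X^m_p(v)|^2$ (where $X^m_p(v)$ denotes the pairing of the covector $v$ with the vector $X^m$ at $p$) is a nonnegative quadratic form on the cotangent space $T_p^*\Gamma_g$, and $\agl[\cdot,\cdot]_{g'}$ is a positive definite quadratic form on the same space. Since $\{X^m\}_{m=1}^n$ spans the tangent space $T_p \Gamma_g$ — this follows because the $n$ vectors $X^m$ are $h$-images of the coordinate frame and $h$ is invertible, and because they are all tangent to the $n$-dimensional space $T_p\Gamma_g$, so an $n$-element spanning-or-dimension count applies (one should check that the $X^m$ are linearly independent in $T_p\Gamma_g$, which follows from invertibility of $h$) — the quadratic form $v \mapsto \sum_m |X^m_p(v)|^2$ is also positive definite on $T_p^*\Gamma_g$. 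Two positive definite quadratic forms on a finite-dimensional space are comparable, so there is a constant $C_p$ with $\sum_m |X^m_p(v)|^2 \leq C_p \agl[v,\bar v]_{g'}$; taking the supremum of $C_p$ over the compact set $\ol{B}$ (equivalently $\Gamma_g$, which is compact) gives a uniform constant $C$ independent of $p$ and of $\alpha$, since both forms have coefficients depending continuously on $p$.

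Concretely, rather than invoking abstract compactness one can make this explicit: apply \eqref{eq:Claim1Equ3-ACR2019} with $\beta = \overline{\alpha}$ to write
\[
\sum_m |X^m \alpha|^2 = |X\alpha|^2 + \langle \df\alpha - \agl[\df\omega,\df\alpha]\df\omega,\ \df\overline{\alpha} - \agl[\df\omega,\df\overline{\alpha}]\df\omega \rangle,
\]
and observe that both terms on the right are expressible through the induced metric $g'$: the vector field $X$ restricted to $\Gamma_g$ has $g'$-bounded length (its components are smooth on $\ol{B}$), so $|X\alpha|^2 \leq C \agl[\df_{\Gamma_g}\alpha, \df_{\Gamma_g}\overline{\alpha}]_{g'}$, and the second term is the squared $g$-length of the projection of $\df\alpha$ onto the $g$-orthogonal complement of $\df\omega$, which is a tangential quantity bounded by $C\agl[\df_{\Gamma_g}\alpha,\df_{\Gamma_g}\overline{\alpha}]_{g'}$ as well, using that the ambient metric $\tilde g = g \oplus 1$ restricts to $g'$ and all transition coefficients between the $x$-coordinate frame and a $g'$-orthonormal frame on $\Gamma_g$ are uniformly bounded on the compact set $\Gamma_g$. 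Summing the two bounds gives \eqref{eq:Claim2Equ-ACR2019}.

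The main obstacle I anticipate is purely bookkeeping: one must be careful that $X^m\alpha$, for $\alpha$ only defined (or only differentiated) on $\Gamma_g$, is genuinely a function of the tangential jet of $\alpha$ — this is exactly the content of Proposition \ref{prop:XTang-ACR2019} and poses no real difficulty — and that the constant comparing $\sum_m|X^m_p|^2$ to $\agl[\cdot,\cdot]_{g'}$ can be taken uniform over $\Gamma_g$, for which compactness of $\Gamma_g$ (it is a closed subset of $\ol{B}\times[-T,T]$, hence compact) together with continuity of all coefficients in $p$ suffices. There is no analytic subtlety; the estimate is a uniform finite-dimensional equivalence of quadratic forms over a compact base.
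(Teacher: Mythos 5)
Your proposal is correct and follows essentially the same route as the paper: both proofs rest on the tangentiality of the $X^m$ to $\Gamma_g$ (Proposition \ref{prop:XTang-ACR2019}) and a pointwise comparison of the quadratic form $\sum_m |X^m_p(\cdot)|^2$ with the positive definite form $\agl[\cdot,\cdot]_{g'}$ on $T_p^*\Gamma_g$, with uniformity of the constant coming from smoothness of the coefficients and compactness of $\Gamma_g$. The paper simply instantiates this comparison by choosing at each point a $g'$-orthogonal frame adapted to $X^m_p$, whereas you phrase it as an equivalence of quadratic forms (or via \eqref{eq:Claim1Equ3-ACR2019} and Cauchy--Schwarz), and in fact you are slightly more explicit than the paper about why the constant can be taken uniform.
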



\begin{proof}
	For a fixed $m$ and a fixed point $p \in \Gamma_g$, we know $X_p^m \in T_p \Gamma_g$, thus we can find local coordinates $\{y^1,y^2,\cdots,y^{n-1}\}$ such that 
	\begin{equation*}
	\begin{cases}
		\partial_{y_1} |_p = X_p^m \\
		g'(\partial_{y^i} |_p,\partial_{y^j} |_p) = g'_{ij}(p) = \delta_{ij}
		\quad \Rightarrow \quad
		g'(\df y^j |_p,\df y^k |_p) = g'^{jk}(p) = \delta^{jk}.
	\end{cases}
	\end{equation*}
	\sq{We see that the set $\{ X^m |_p, \partial_{y^j} |_p \,;\, j = 2,\cdots,n-1 \}$ forms an orthogonal basis of $T_p \Gamma_g$ according to the metric $g'$.}
	The exterior-derivative of $\alpha$ at point $p$ on $\Gamma_g$ is $\df \alpha |_p = X_p^m \alpha + \sum_{j=2}^{n-1} (\partial_{y^j} \alpha) |_p \dif y^j$, so	
	\begin{equation*}
		|X_p^m \alpha|^2 
		\leq |X_p^m \alpha|^2 + \sum_{j=2}^{n-1} |\partial_{y^j} \alpha|^2
		= |X_p^m \alpha|^2 + \sum_{j,k=2}^{n-1} \partial_{y^j} \alpha \cdot g'^{jk}(p) \cdot \overline{\partial_{y^k} \alpha}
		\lesssim \agl[\df \alpha |_p, \overline{\df \alpha} |_p]_{g'},
	\end{equation*}
	\sq{which yields} \eqref{eq:Claim2Equ-ACR2019}.
\end{proof}

We use the following convention for integrations and Sobolev norms on $(\Rn \times \R_t,\tilde{g})$ and on $(\Gamma_g,g')$.
Below, we assume that $\mathcal P \subset \Rn \times \R$ and $\mathcal Q \subset \Gamma_g$.
\begin{align*}
& \int_{\mathcal P} f \dif V_g := \int_{\mathcal P} f(x,t) |\tilde g(x,t)|^{1/2}\dif x \dif t,
&&
\int_{\mathcal Q} f \dif S_{g'} := \int_{\mathcal Q} f(x) |g'(x)|^{1/2}\dif S, \\
& \nrm[L^2(\mathcal P; g)]{f} := (\int_{\mathcal P} |f|^2 \dif V_g)^{1/2}, 
&& \nrm[L^2(\mathcal Q; g')]{f} := (\int_{\mathcal Q} |f|^2 \dif S_{g'})^{1/2}, \\
& \nrm[H^1(\mathcal P; g)]{f} := (\int_{\mathcal P} |f|^2 + \agl[\df f, \overline{\df f}] \dif V_g)^{1/2},
&&
\nrm[H^1(\mathcal Q; g')]{f} := (\int_{\mathcal Q} |f|^2 + \agl[\df f, \overline{\df f}]_{g'} \dif S_{g'})^{1/2}.
\end{align*}
Here the volume form $\df V_g$ corresponds to the space-time tensor $\tilde g:= g \oplus 1$, and $\df S$ is the volume form on $\Gamma_g$ induced by the usual Euclidean metric.

\begin{lem} \label{lem:IntEqu-ACR2019}
	\sq{There exist two positive constants $C_1$, $C_2$ such that,} for any non-negative function $f$ and any bounded hypersurface $\mathcal Q \subset \Gamma_g$, we have
	\begin{equation} \label{eq:IntEqu-ACR2019}
	C_1 \int_{\mathcal Q} f(p) \dif S_{g'}
	\leq \int_{\mathcal Q} f(p) \dif S
	\leq C_2 \int_{\mathcal Q} f(p) \dif S_{g'}.
	\end{equation}
\end{lem}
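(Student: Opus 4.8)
The plan is to compare the two measures $\dif S_{g'}$ and $\dif S$ on the hypersurface $\Gamma_g$ by examining their densities in a fixed coordinate system. First I would note that $\Gamma_g = \{(x,\omega(x)) : x \in \ol B\}$ is a graph over $\ol B$, so that the global map $x \mapsto (x,\omega(x))$ provides a smooth parametrization of $\Gamma_g$ by the compact set $\ol B$. In these coordinates the Euclidean surface element is $\dif S = \sqrt{1 + |\nabla \omega(x)|^2}\,\dif x$, which is a continuous, strictly positive function of $x \in \ol B$, hence bounded above and below by positive constants by compactness. Similarly, $\dif S_{g'} = |g'(x)|^{1/2}\,\dif x$, where $g'$ is the metric induced by $\tilde g = g \oplus 1$ on $\Gamma_g$; its matrix in the $x$-coordinates is $g'_{jk}(x) = g_{jk}(x) + \partial_j \omega(x)\,\partial_k \omega(x)$, which is smooth in $x$ and positive definite (being the pullback of the Riemannian metric $\tilde g$ under an immersion), so $|g'(x)|^{1/2}$ is again continuous and strictly positive on $\ol B$, hence bounded between two positive constants.

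The key step is then simply to take the ratio: the function $x \mapsto |g'(x)|^{1/2} / \sqrt{1+|\nabla\omega(x)|^2}$ is continuous and strictly positive on the compact set $\ol B$, so there are constants $0 < c_1 \leq c_2 < \infty$ with
\[
c_1 \leq \frac{|g'(x)|^{1/2}}{\sqrt{1+|\nabla\omega(x)|^2}} \leq c_2, \qquad x \in \ol B.
\]
Equivalently $c_1 \,\dif S \leq \dif S_{g'} \leq c_2 \,\dif S$ as measures on $\Gamma_g$. Integrating this pointwise inequality against an arbitrary non-negative function $f$ and against the characteristic function of any bounded (hence measurable) hypersurface $\mathcal Q \subset \Gamma_g$ yields \eqref{eq:IntEqu-ACR2019} with $C_1 = 1/c_2$ and $C_2 = 1/c_1$, both independent of $f$ and of $\mathcal Q$.

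I do not expect any serious obstacle here: the only points requiring a word of care are (i) verifying that $\Gamma_g$ genuinely is a graph over $\ol B$ — which is immediate from the definition $\Gamma_g = \{t = \omega(x)\}$ with $\omega \in C^\infty(\ol B)$ given by \eqref{assumption1-ACR2019} — and (ii) noting that the comparison constants come from compactness of $\ol B$ together with smoothness and positive-definiteness of both $g$ and the induced metric $g'$, so that all relevant densities are continuous and bounded away from $0$ and $\infty$. The fact that $g_{jk} = \delta_{jk}$ for $|x| \geq 1$ is not even needed, since $\Gamma_g$ already lies over $\ol B$; it only ensures the two measures actually coincide near $\partial B$, which is not required for the stated two-sided bound.
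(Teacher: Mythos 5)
Your proof is correct, and the underlying mechanism is the same as in the paper: both $\dif S$ and $\dif S_{g'}$ are volume densities that are smooth and strictly positive over a compact set, so their ratio is pinched between two positive constants and the two-sided bound follows by integrating against $f\,\chi_{\mathcal Q}$. The implementation differs, though. The paper works intrinsically on $\Gamma_g$: it covers $\overline{\mathcal Q}$ by finitely many charts whose coordinate frames are Euclidean-orthonormal, writes $\dif S_{g'}=|g'|^{1/2}\dif S$ chart by chart, bounds $|g'|$ via the extreme eigenvalues of $g$, and sums with a partition of unity. You instead exploit that $\Gamma_g$ is globally the graph $x\mapsto(x,\omega(x))$ over $\ol B$, which gives a single chart and the explicit densities $\sqrt{1+|\nabla\omega|^2}\,\dif x$ and $\bigl(\det(g_{jk}+\partial_j\omega\,\partial_k\omega)\bigr)^{1/2}\dif x$ (the latter correctly identified as the pullback of $\tilde g=g\oplus 1$ under the graph embedding), after which compactness of $\ol B$ and smoothness of $\omega$ and $g$ finish the argument. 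Your route is more elementary and avoids the chart/partition-of-unity bookkeeping, at the cost of being tied to the graph structure of $\Gamma_g$ (which here is guaranteed by \eqref{assumption1-ACR2019}, so nothing is lost); the paper's argument would apply verbatim to any compact hypersurface with two comparable metrics. Your closing remarks — that measurability of $f$ and $\mathcal Q$ is all that is needed, and that $g=\delta$ outside $B$ is irrelevant for the bound — are accurate, and the constants $C_1=1/c_2$, $C_2=1/c_1$ are independent of $f$ and $\mathcal Q$ as required.
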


\begin{proof}
	Because $\mathcal Q$ is bounded, we can find finite many local charts $(U_i,\varphi_i)$ and partition of unity $\{\chi_i\}$ such that
	\begin{equation*}
	\begin{cases}
	\supp \chi_i \subset U_i,
	\quad \sum_i \chi_i = 1, \quad \overline{\mathcal Q} \subset \cup_i U_i, \\
	\{ \partial_{(\varphi_i)^j} |_p \}_{j=1}^{n-1} \text{~forms a positve basis of~} T_p \mathcal Q, \text{~under the metric~} g', \\
	\{ \partial_{(\varphi_i)^j} |_p \}_{j=1}^{n-1} \text{~forms a orthonormal positve basis of~} T_p \mathcal Q, \text{~under the Euclidean metric}.
	\end{cases}
	\end{equation*}
	In these $U_i$, the volume form is $\df S_g = |g'|^{1/2} \dif (\varphi_i)^1 \wedge \cdots \wedge \dif (\varphi_i)^{n-1}$ where the $jk$-th component of the matrix $g'$ is $(g')^{jk} |_p = \agl[\dif (\varphi_i)^j, \dif (\varphi_i)^k]$.
	Thus, $|g| / \lambda_1 \leq |g'| \leq |g| / \lambda_2$ where the $\lambda_1$ and $\lambda_2$ are the largest and smallest eigenvalues of $g$, respectively.
	If we denote the supremum and infimum of this $|g'|$ on $U_i$ as $C_{i,1}$ and $C_{i,2}$, then $C_{i,1}$, $C_{i,2} > 0$ and
	\begin{align*}
	\int_{\mathcal Q} f(p) \dif S_{g'} 
	& = \sum_i \int_{\mathcal Q} f(p) \chi_i(p) |g'|^{1/2} \dif (\varphi_i)^1 \wedge \cdots \dif \wedge (\varphi_i)^{n-1} \\
	& \leq \sum_i C_{i,1} \int_{\mathcal Q} f(p) \chi_i(p) \dif (\varphi_i)^1 \wedge \cdots \dif \wedge (\varphi_i)^{n-1} \\
	& \lesssim \int_{\mathcal Q} f(p) \sum_i \chi_i(p) \dif (\varphi_i)^1 \wedge \cdots \dif \wedge (\varphi_i)^{n-1} \\
	& = \int_{\mathcal Q} f(p) \sum_i \chi_i(p) \dif S
	= \int_{\mathcal Q} f(p) \dif S,
	\end{align*}
	and
	\begin{align*}
	\int_{\mathcal Q} f(p) \dif S_{g'} 
	& = \sum_i \int_{\mathcal Q} f(p) \chi_i(p) |g'|^{1/2} \dif (\varphi_i)^1 \wedge \cdots \dif \wedge (\varphi_i)^{n-1} \\
	& \geq \sum_i C_{i,2} \int_{\mathcal Q} f(p) \chi_i(p) \dif (\varphi_i)^1 \wedge \cdots \dif \wedge (\varphi_i)^{n-1} \\
	& \gtrsim \int_{\mathcal Q} f(p) \sum_i \chi_i(p) \dif (\varphi_i)^1 \wedge \cdots \dif \wedge (\varphi_i)^{n-1} \\
	& = \int_{\mathcal Q} f(p) \sum_i \chi_i(p) \dif S
	= \int_{\mathcal Q} f(p) \dif S.
	\end{align*}
	The proof is complete.
\end{proof}

\subsection{Energy estimates} \label{subsec:enEst-ACR2019}

We consider a parameter $\sigma \geq 1$. All constants in this article will be independent of $\sigma$. Throughout the paper, we use the notation $\calX = \calX_{\sigma}$ defined as follows,
\begin{equation} \label{eq:calX-ACR2019}
	\calX f := \frac {\sigma^2 |f|^2 + \sum_{m= 1}^n |X^m f|^2} {\sqrt{|\nabla \omega(x)|^2+1}},
\end{equation}
for any function for which the right-hand-side of \eqref{eq:calX-ACR2019} can be defined.
We first give two energy estimates, which are straightforward extensions of \cite[Lemmas 3.4 and 3.5]{RSfixed2019March} to the Riemannian case.

\begin{lem} \label{lem:energyTop-ACR2019}
	For $\sigma = 1$ we have
	\begin{align}
	\int_{\Gamma_\tau} \big( |\alpha|^2 + |\alpha_t|^2 + \agl[ \df \alpha, \overline{\df \alpha} ] \big) |g|^{1/2} \dif S
	& \leq C( \nrm[L^2(Q_+;g)]{(\square_g + q)\alpha}^2 + \int_{\Gamma_{g}} (\calX_1 \alpha) |g|^{1/2} \dif S \nonumber\\
	& \quad + \nrm[H^1(\Sigma_{+};g')]{\alpha}^2 + \nrm[L^2(\Sigma_{+};g')]{\partial_{\nu,g} \alpha}^2 ) \label{eq:energyTop-ACR2019}
	\end{align}
	where the constant $C$ depends on $\nrm[L^\infty]{q}$, and $\partial_{\nu,g} \alpha := \nu_j g^{jk}(x) \partial_k \alpha$.
\end{lem}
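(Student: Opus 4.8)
The plan is to prove the estimate \eqref{eq:energyTop-ACR2019} by the standard energy method for the wave equation, carried out with respect to the space-time metric $\tilde g = g \oplus 1$ and adapted to the ``curved bottom'' domain $Q_+$ whose lower boundary is $\Gamma_g$ rather than a time slice. First I would set $f := (\square_g + q)\alpha$ and multiply the equation $\square_g \alpha = f - q\alpha$ (formally) by $\overline{\alpha_t}$, take real parts, and integrate over the region $Q_+ \cap \{ \tau' < t < \tau \}$ (or equivalently over $Q_+ \cap \{ t < \tau \}$), obtaining the usual divergence identity: $\re(\overline{\alpha_t}\,\square_g\alpha)$ equals a space-time divergence of the energy-momentum vector field whose $t$-component is the energy density $e(\alpha) := |\alpha_t|^2 + \agl[\df\alpha,\overline{\df\alpha}]$ and whose spatial components are $-2\re(\overline{\alpha_t}\, g^{jk}\p_k\alpha)$. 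Integrating this identity over $Q_+ \cap \{ t < \tau \}$ and applying the divergence theorem produces boundary contributions on three pieces: the top slice $\Gamma_\tau$, the lateral boundary $\Sigma_+$, and the curved bottom $\Gamma_g$.

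The key point is to track the signs of these boundary terms. On $\Gamma_\tau$ the outward normal is $\p_t$ and the flux is exactly $\int_{\Gamma_\tau} e(\alpha)\,|g|^{1/2}\,dS$ (up to the fixed volume factor), which is the quantity we want to bound from above, so this term lands on the left with a favorable sign. On $\Gamma_g$ the outward (downward) conormal is proportional to $\vec n = (\nabla\omega,-1)/\sqrt{|\nabla\omega|^2+1}$; here I would use the eikonal equation $\agl[\df\omega,\df\omega]_g=1$ to see that $\Gamma_g$ is a \emph{characteristic} hypersurface for $\square_g$, so the energy flux across it is a perfect square (a nonnegative multiple of $|X\alpha|^2$-type quantity, up to lower order) — this is precisely why only the tangential derivatives $X^m\alpha$ and $\sigma^2|\alpha|^2$ (with $\sigma=1$) enter through $\calX_1\alpha$, and here I would invoke Lemma \ref{lem:Claim1Equ-ACR2019} to rewrite the relevant quadratic form on $\Gamma_g$ in terms of $\sum_m (X^m\alpha)(X^m\overline\alpha)$ plus controllable remainders, and Lemma \ref{lem:IntEqu-ACR2019} to convert between $dS$ and $dS_{g'}$. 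The lateral term on $\Sigma_+$ is not sign-definite, but it is controlled by $\nrm[H^1(\Sigma_+;g')]{\alpha}^2 + \nrm[L^2(\Sigma_+;g')]{\p_{\nu,g}\alpha}^2$ by Cauchy--Schwarz. Finally the interior term $\int_{Q_+\cap\{t<\tau\}} \re(\overline{\alpha_t}(f - q\alpha))$ is estimated by Cauchy--Schwarz and the bound $|q|\le\nrm[L^\infty]{q}$, giving $\le C\nrm[L^2(Q_+;g)]{f}^2 + C\int_{Q_+\cap\{t<\tau\}} e(\alpha) + |\alpha|^2$.

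Putting these together, and absorbing the $|\alpha|^2$ contribution on the top slice by also multiplying the equation $|\alpha(\tau)|^2 = |\alpha|_{\Gamma_g}|^2 + \int (\text{something})$-style fundamental theorem of calculus in $t$ (or simply noting $\p_t|\alpha|^2 = 2\re(\overline\alpha\alpha_t)$ and integrating), I obtain
\[
\int_{\Gamma_\tau} e(\alpha)\,|g|^{1/2}\,dS \le C\Big( \nrm[L^2(Q_+;g)]{f}^2 + \int_{\Gamma_g}(\calX_1\alpha)\,|g|^{1/2}\,dS + \nrm[H^1(\Sigma_+;g')]{\alpha}^2 + \nrm[L^2(\Sigma_+;g')]{\p_{\nu,g}\alpha}^2 + \int_{Q_+\cap\{t<\tau\}}(e(\alpha)+|\alpha|^2)\Big).
\]
The remaining bulk term on the right is then removed by Grönwall's inequality in the variable $\tau$: since $\int_{Q_+\cap\{t<\tau\}}(e(\alpha)+|\alpha|^2) = \int_{-1}^{\tau}\big(\int_{\Gamma_{\tau'}}(e(\alpha)+|\alpha|^2)\big)d\tau'$ (up to bounded metric factors), the function $\Phi(\tau):=\int_{\Gamma_\tau}(e(\alpha)+|\alpha|^2)|g|^{1/2}dS$ satisfies $\Phi(\tau)\le A + C\int_{-1}^\tau \Phi$, hence $\Phi(\tau)\le Ae^{CT}$, which yields \eqref{eq:energyTop-ACR2019} with a constant depending on $T$ and $\nrm[L^\infty]{q}$.

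The main obstacle I expect is the careful bookkeeping on the characteristic boundary $\Gamma_g$: one must verify that the energy flux across $\Gamma_g$ is genuinely controlled by $\calX_1\alpha$, i.e.\ that after using the eikonal equation the only surviving derivatives are the tangential ones $X^m\alpha$ (plus a zeroth-order term), with no uncontrolled normal-derivative contribution. This is a computation in the spirit of Lemma \ref{lem:Claim1Equ-ACR2019}; a secondary technical nuisance is that $\alpha$ is only assumed as regular as needed for the left side to make sense, so strictly speaking one should first establish the identity for smooth $\alpha$ and pass to the limit, and one must keep the various metric volume factors $|g|^{1/2}$, $\sqrt{|\nabla\omega|^2+1}$ and the equivalences from Lemma \ref{lem:IntEqu-ACR2019} under control throughout.
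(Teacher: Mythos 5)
Your proposal is correct and follows essentially the same route as the paper: multiply by $\overline{\alpha_t}$, integrate over $Q_+\cap\{t<\tau\}$, use Lemma \ref{lem:Claim1Equ-ACR2019} together with the eikonal equation to identify the flux across the characteristic bottom $\Gamma_g$ with $\calX_1\alpha$, bound the lateral and bulk terms by Cauchy--Schwarz and $\nrm[L^\infty]{q}$, convert measures via Lemma \ref{lem:IntEqu-ACR2019}, and finish with Gr\"onwall in $\tau$. The only cosmetic difference is that the paper builds the $|\alpha|^2$ term into the energy by using the multiplier identity for $\square_g\alpha+\sigma^2\alpha$ (with $\sigma=1$), whereas you track it separately via $\partial_t|\alpha|^2=2\re(\overline{\alpha}\alpha_t)$ — these are equivalent.
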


\begin{proof}
	Denote $Q_\tau := \{ (x,t) \,;\, |x| \leq 1, \omega(x) \leq t \leq \tau\}$.
	The outer normal of $\Gamma_g$ is $\frac {(\nabla \omega(x),-1)} {\sqrt{|\nabla \omega(x)|^2+1}}$ pointing into $\{ (x,t) \in \Rn \times \R \,;\, t < \omega(x) \}$. Let $\Gamma_{g,\tau} := \Gamma_g \cap \{ t \leq \tau \}$ and $\Sigma_{+,\tau} := \Sigma_+ \cap \{ t \leq \tau \}$.
	
	Let $\sigma \geq 1$ (later we will take $\sigma=1$). We have
	\begin{equation} \label{eq:t.identity-ACR2019}
	\overline{\alpha_t} (\square_g \alpha + \sigma^2 \alpha) + \alpha_t (\overline{\square_g \alpha + \sigma^2 \alpha}) 
	= \partial_t \big( \sigma^2 |\alpha|^2 + |\alpha_t|^2 + \agl[ \df \alpha, \overline{\df \alpha} ] \big) - \tilde \delta (\overline{\alpha_t} \dif \alpha) - \tilde \delta (\alpha_t \overline{\dif \alpha}).
	\end{equation}

	The term
	\[
	f_\sigma(\tau) := \int_{\Gamma_\tau} \big(  \sigma^2  |\alpha|^2 + |\alpha_t|^2 + \agl[ \df \alpha, \overline{\df \alpha} ] \big) |g|^{1/2} \dif S
	\]
	is the quantity that we wish to estimate.
	Integrating \eqref{eq:t.identity-ACR2019} over $Q_\tau$, we have 
	\begin{align}
	& \int_{Q_\tau} \overline{\alpha_t} (\square_g \alpha + \sigma^2 \alpha) + \alpha_t (\overline{\square_g \alpha + \sigma^2 \alpha}) \dif V_g \nonumber\\
	= & \int_{Q_\tau} \partial_t \big( \sigma^2 |\alpha|^2 + |\alpha_t|^2 + \agl[ \df \alpha, \overline{\df \alpha} ] \big) - \tilde \delta (\overline{\alpha_t} \dif \alpha) - \tilde \delta (\alpha_t \overline{\dif \alpha}) \dif V_g \nonumber\\
	= & \int_{\Gamma_\tau} ( \sigma^2 |\alpha|^2 + |\alpha_t|^2 + \agl[ \df \alpha, \overline{\df \alpha} ] ) |g|^{1/2} \dif S
	- \int_{\Gamma_{g,\tau}} \frac {\sigma^2 |\alpha|^2 + |\alpha_t|^2 + \agl[ \df \alpha, \overline{\df \alpha} ]} {\sqrt{|\nabla \omega(x)|^2+1}} |g|^{1/2} \dif S \nonumber\\
	& \ - \int_{\Gamma_{g,\tau} \,\cup\, \Sigma_{+,\tau}} \nu_j g^{jk} (\overline{\alpha_t} \alpha_k + \alpha_t \overline{\alpha_k}) |g|^{1/2} \dif S \nonumber\\
	= & \ f_\sigma(\tau) - \int_{\Gamma_{g,\tau}} \frac {\sigma^2 |\alpha|^2 + |\alpha_t|^2 + \agl[\df \alpha, \overline{\df \alpha}] + (\partial_j \omega) g^{jk} (\overline{\alpha_t} \alpha_k + \alpha_t \overline{\alpha_k})} {\sqrt{|\nabla \omega(x)|^2+1}} |g|^{1/2} \dif S \nonumber\\
	& \ - \int_{\Sigma_{+,\tau}} [(\partial_{\nu,g} \alpha) \overline{\alpha_t} + (\overline{\partial_{\nu,g} \alpha}) \alpha_t] |g|^{1/2} \dif S. \label{eq:ident1-ACR2019}
	\end{align}
	We now let $\sigma=1$.
	Recall the definition of $X^m$ in \eqref{eq:vfX-ACR2019}.
	Combining \eqref{eq:ident1-ACR2019} with
	\eqref{eq:Claim1Equ-ACR2019} and \eqref{eq:calX-ACR2019}, we can estimate $f_1(\tau)$ as
	\begin{align*}
	f_1(\tau)
	& = \int_{Q_\tau} \overline{\alpha_t} (\square_g \alpha +     \alpha) + \alpha_t (\overline{\square_g \alpha +     \alpha}) \dif V_g + \int_{\Gamma_{g,\tau}} (\calX_1 \alpha) |g|^{1/2} \dif S \nonumber\\
	& \quad + \int_{\Sigma_{+,\tau}} [(\partial_{\nu,g} \alpha) \overline{\alpha_t} + (\overline{\partial_{\nu,g} \alpha}) \alpha_t] |g|^{1/2} \dif S \nonumber\\
	& \leq \int_{Q_\tau} (|\alpha_t|^2 + |\square_g \alpha +     \alpha|^2) \dif V_g + \int_{\Gamma_{g,\tau}} (\calX_1 \alpha) |g|^{1/2} \dif S + \int_{\Sigma_{+,\tau}} (|\alpha_t|^2 + |\partial_{\nu,g} \alpha|^2) |g|^{1/2} \dif S \nonumber\\
	& \leq \int_{Q_\tau} [|\alpha_t|^2 + 2 (1 + \nrm[L^\infty]{q})^2 |\alpha|^2 + 2|(\square_g + q) \alpha|^2] \dif V_g + \int_{\Gamma_{g,\tau}} (\calX_1 \alpha) |g|^{1/2} \dif S \nonumber\\
	& \quad + \int_{\Sigma_{+,\tau}} (|\alpha_t|^2 + |\partial_{\nu,g} \alpha|^2) |g|^{1/2} \dif S \nonumber\\
	& \leq C\int_{-1}^\tau f_1(t) \dif t + \int_{Q_\tau} |(\square_g + q) \alpha|^2 \dif V_g + \int_{\Gamma_{g,\tau}} (\calX_1 \alpha) |g|^{1/2} \dif S \nonumber\\
	& \quad + \int_{\Sigma_{+,\tau}} (|\alpha_t|^2 + |\partial_{\nu,g} \alpha|^2) |g|^{1/2} \dif S,
	\end{align*}
	and the Gr\"{o}nwall's inequality gives
	\begin{equation} \label{eq:fwoSg-ACR2019}
	f_1(\tau) 
	\lesssim \int_{Q_\tau} |(\square_g + q) \alpha|^2 \dif V_g + \int_{\Gamma_{g,\tau}} (\calX_1 \alpha) |g|^{1/2} \dif S + \int_{\Sigma_{+,\tau}} (|\alpha_t|^2 + |\partial_{\nu,g} \alpha|^2) |g|^{1/2} \dif S.
	\end{equation}

	Now combining \eqref{eq:fwoSg-ACR2019} with \eqref{eq:IntEqu-ACR2019}, we arrive at \eqref{eq:energyTop-ACR2019}.
\end{proof}

\begin{lem} \label{lem:energyBottom-ACR2019}
	For any functions $f, \varphi, \alpha \in C^2(\overline{Q_+})$, we have
	\begin{align}
		& \int_{\Gamma_g} e^{2\sigma \varphi} (\calX (f\alpha)) |g|^{1/2} \dif S \nonumber\\
		\leq & \ C_{q,f} \big[ \nrm[L^2(Q_+;g)]{e^{\sigma \varphi} (\square_g + q) \alpha}^2 + \sigma \nrm[L^2(Q_+;g)]{e^{\sigma \varphi} \nabla_{x,t} \alpha}^2 + \sigma^3 \nrm[L^2(Q_+;g)]{e^{\sigma \varphi} \alpha}^2 \nonumber\\
		& \ + \nrm[L^2(\Sigma_{+})]{e^{\sigma \varphi} \partial_{\nu} \alpha}^2 + \nrm[L^2(\Sigma_{+})]{e^{\sigma \varphi} \alpha_t}^2 + \sigma^2 \nrm[L^2(\Sigma_{+})]{e^{\sigma \varphi} \alpha}^2 \big]. \label{eq:energyBottom-ACR2019}
	\end{align}
	where the constant $C_{q,f}$ depends on $\nrm[L^\infty]{q}$ and $f$ and the metric $g$.
\end{lem}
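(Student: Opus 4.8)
The plan is to reduce the weighted estimate for $f\alpha$ to a combination of the unweighted identity used in Lemma~\ref{lem:energyTop-ACR2019} and a divergence-theorem computation that produces the boundary terms on $\Gamma_g$ with the right signs. First I would set $\beta := e^{\sigma\varphi} f \alpha$ and observe, using \eqref{eq:Claim2Equ-ACR2019}, that $\int_{\Gamma_g} e^{2\sigma\varphi}(\mathcal X(f\alpha))\,|g|^{1/2}\,\df S$ is controlled by $\sigma^2 \nrm[L^2(\Gamma_g;g')]{\beta}^2 + \nrm[H^1(\Gamma_g;g')]{\beta}^2$ up to constants depending on $f$ and $g$ (here one absorbs the $\sqrt{|\nabla\omega|^2+1}$ into the change between $\df S$ and $\df S_{g'}$ via Lemma~\ref{lem:IntEqu-ACR2019}). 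So it suffices to bound the trace of $\beta$ and its tangential gradient on $\Gamma_g$ by the right-hand side of \eqref{eq:energyBottom-ACR2019}.

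Next I would run the same pointwise identity \eqref{eq:t.identity-ACR2019} but now applied to $\beta$ rather than $\alpha$, and integrate it over the \emph{full} region $Q_+$ (whose boundary consists of $\Gamma_g$ at the bottom, $\Sigma_+$ on the lateral side, and $\Gamma_T$ at the top). Since $\beta = e^{\sigma\varphi} f\alpha$ vanishes for $t$ near $-1$ below $\Gamma_g$ only in the unweighted top estimate — here instead one simply keeps the $\Gamma_T$ term, which has a favorable sign because it appears with a plus on the left, so it can be dropped. The divergence theorem then gives, schematically,
\[
\int_{\Gamma_g} \frac{\sigma^2|\beta|^2 + |\beta_t|^2 + \agl[\df\beta,\overline{\df\beta}] + (\p_j\omega)g^{jk}(\overline{\beta_t}\beta_k + \beta_t\overline{\beta_k})}{\sqrt{|\nabla\omega|^2+1}}\,|g|^{1/2}\,\df S \leq \int_{Q_+}\big(\ldots\big)\,\df V_g + \int_{\Sigma_+}\big(\ldots\big)\,|g|^{1/2}\,\df S.
\]
Using \eqref{eq:Claim1Equ-ACR2019}, the numerator of the $\Gamma_g$ integrand equals exactly $\sum_m |X^m\beta|^2 + \sigma^2|\beta|^2$ minus $|\beta_t|^2 + |\agl[\df\omega,\df\beta]|^2$-type cross terms; more precisely one recognizes it as $\mathcal X_\sigma(\beta)\sqrt{|\nabla\omega|^2+1}$ after completing squares (this is precisely why the definition \eqref{eq:calX-ACR2019} of $\mathcal X$ is tailored with the $X^m$'s). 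Thus the left side is, up to constants, $\int_{\Gamma_g} e^{2\sigma\varphi}(\mathcal X(f\alpha))\,|g|^{1/2}\,\df S$, which is what we want to bound.

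Finally I would expand the volume and lateral terms. On $Q_+$: $(\square_g+\sigma^2)\beta = e^{\sigma\varphi}f(\square_g+q)\alpha + (\text{terms with }\nabla_{x,t}\alpha \text{ times }\sigma) + (\text{terms with }\alpha\text{ times }\sigma^2)$, coming from differentiating $e^{\sigma\varphi}f$ twice and from $(\sigma^2-q)$; Cauchy–Schwarz with weights $\sigma^{\mp 1}$ distributes these as $\nrm{e^{\sigma\varphi}(\square_g+q)\alpha}^2 + \sigma\nrm{e^{\sigma\varphi}\nabla_{x,t}\alpha}^2 + \sigma^3\nrm{e^{\sigma\varphi}\alpha}^2$, matching \eqref{eq:energyBottom-ACR2019}. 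The cross term $\overline{\beta_t}(\square_g+\sigma^2)\beta + \text{c.c.}$ is handled by Young's inequality, $|\overline{\beta_t}(\square_g+\sigma^2)\beta| \le \sigma|\beta_t|^2 + \sigma^{-1}|(\square_g+\sigma^2)\beta|^2$. On $\Sigma_+$: $\p_{\nu,g}\beta$ and $\beta_t$ expand into $e^{\sigma\varphi}\p_\nu\alpha$, $e^{\sigma\varphi}\alpha_t$, and $\sigma e^{\sigma\varphi}\alpha$ terms, giving the three lateral contributions on the right of \eqref{eq:energyBottom-ACR2019}. The main obstacle — and the step I would be most careful about — is the algebra identifying the $\Gamma_g$ boundary integrand with $\mathcal X_\sigma(f\alpha)\sqrt{|\nabla\omega|^2+1}$: one must check, via \eqref{eq:Claim1Equ-ACR2019} and the eikonal equation $\agl[\df\omega,\df\omega]=1$, that the indefinite cross terms $(\p_j\omega)g^{jk}(\overline{\beta_t}\beta_k+\cdots)$ combine with $|\beta_t|^2$ and $\agl[\df\beta,\overline{\df\beta}]$ to form precisely $\sum_m|X^m\beta|^2$ and nothing with the wrong sign survives, and then to pass from $\beta$-derivatives back to $(f\alpha)$-derivatives picking up only lower-order ($\sigma$-weighted) corrections absorbed into the stated right-hand side.
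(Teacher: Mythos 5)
Your overall architecture is the same as the paper's (the multiplier identity \eqref{eq:t.identity-ACR2019} for $\beta = e^{\sigma\varphi}f\alpha$, the identification of the $\Gamma_g$ integrand with $\calX\beta$ via Lemma \ref{lem:Claim1Equ-ACR2019} and the eikonal equation, and the Cauchy--Schwarz bookkeeping producing the $\sigma$, $\sigma^3$ volume weights and the three lateral terms), but there is a genuine gap in your treatment of the top boundary term. When you integrate the identity over $Q_+$ and solve for the $\Gamma_g$ term, the top energy enters with the \emph{unfavorable} sign: the outward normal on $\Gamma_T$ is $+\partial_t$ while on $\Gamma_g$ its $t$-component is negative, so one gets
$\int_{\Gamma_g} (\calX\beta)\,|g|^{1/2}\dif S \;=\; f_\sigma(T) \;-\; \int_{Q_+}\big[\overline{\beta_t}(\square_g\beta+\sigma^2\beta)+\mathrm{c.c.}\big]\dif V_g \;-\; \int_{\Sigma_+}[\cdots]\dif S,$
exactly as in \eqref{eq:3.2Temp1-ACR2019}; this is the energy balance in which $\Gamma_g$ is the ``initial'' surface and $\Gamma_T$ the ``final'' one. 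Hence $f_\sigma(T)\ge 0$ appears with a plus on the side you must bound and cannot be dropped; your claim that it ``has a favorable sign'' is a sign error. Moreover, even after correcting the sign, estimating $f_\sigma(T)$ at the single time slice $t=T$ only yields trace terms $\nrm[L^2(\Gamma_T)]{e^{\sigma\varphi}\nabla_{x,t}\alpha}^2+\sigma^2\nrm[L^2(\Gamma_T)]{e^{\sigma\varphi}\alpha}^2$, which are not among the terms allowed on the right of \eqref{eq:energyBottom-ACR2019}.

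The missing device is the one the paper uses: write the identity on $Q_\tau$ for a variable top time $\tau$, bound the integrand of $f_\sigma(\tau)$ pointwise as in \eqref{eq:3.2Temp4-ACR2019}, and then integrate the whole identity in $\tau$ over $[1,T]$. This turns $\int_1^T f_\sigma(\tau)\dif\tau$ into a space-time integral over a subset of $Q_+$, which is absorbed into $\sigma \nrm[L^2(Q_+;g)]{e^{\sigma\varphi}\nabla_{x,t}\alpha}^2 + \sigma^3 \nrm[L^2(Q_+;g)]{e^{\sigma\varphi}\alpha}^2$ (using $\sigma\ge 1$), while the $\Gamma_{g,\tau}$ term equals the full $\Gamma_g$ term for $\tau\ge 1$. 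The rest of your sketch (the algebra identifying the $\Gamma_g$ integrand with $\calX_\sigma\beta$, which you rightly flag and which is precisely \eqref{eq:Claim1Equ-ACR2019}, the expansion of $(\square_g+q)(e^{\sigma\varphi}f\alpha)$, and the lateral boundary expansion) is sound and matches the paper, but without the $\tau$-averaging step, or some equivalent way of trading the $\Gamma_T$ energy for interior terms, the argument does not close.
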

\begin{proof}
	We first focus on the case where $f = 1$.	
	From \eqref{eq:ident1-ACR2019} and \eqref{eq:Claim1Equ-ACR2019}, we can have
	\begin{align*}
	& \int_{Q_\tau} \overline{\beta_t} (\square_g \beta + \sigma^2 \beta) + \beta_t (\overline{\square_g \beta + \sigma^2 \beta}) \dif V_g \nonumber\\
	= & \ f_\sigma(\tau) - \int_{\Gamma_{g,\tau}} (\calX \beta) |g|^{1/2} \dif S - \int_{\Sigma_{+,\tau}} [(\partial_{\nu,g} \beta) \overline{\beta_t} + (\overline{\partial_{\nu,g} \beta}) \beta_t] |g|^{1/2} \dif S,
	\end{align*}
	where $f_\sigma(\tau) = \int_{\Gamma_\tau} \big( \sigma^2 |\beta|^2 + |\beta_t|^2 + (\df \beta, \overline{\df \beta})_g \big) |g|^{1/2} \dif S$, thus
	\begin{align}
	\int_{\Gamma_{g,\tau}} (\calX \beta) |g|^{1/2} \dif S
	& = f_\sigma(\tau) - \int_{Q_\tau} \overline{\beta_t} (\square_g \beta + \sigma^2 \beta) + \beta_t (\overline{\square_g \beta + \sigma^2 \beta}) \dif V_g \nonumber\\
	& \quad - \int_{\Sigma_{+,\tau}} [(\partial_{\nu,g} \beta) \overline{\beta_t} + (\overline{\partial_{\nu,g} \beta}) \beta_t] |g|^{1/2} \dif S. \label{eq:3.2Temp1-ACR2019}
	\end{align}

	Now let $\beta = e^{\sigma \varphi} \alpha$, then
	\begin{equation} \label{eq:alphabeta-ACR2019}
	\left\{\begin{aligned}
	X^m \beta 
	& = e^{\sigma \varphi} (X^m + \sigma X^m \varphi)\alpha, \\
	\partial_t \beta
	& = e^{\sigma \varphi} (\alpha_t + \sigma \varphi_t \alpha), \quad \partial_t^2 \beta
	= e^{\sigma \varphi} \{ \alpha_{tt} + 2\sigma \varphi_t \alpha_t + [\sigma \varphi_{tt} + (\sigma \varphi_t)^2] \alpha \} \\
	\Delta_g \beta
	& = \Delta_g (e^{\sigma \varphi}) \cdot \alpha + 2\agl[\df (e^{\sigma \varphi}), \df \alpha] + e^{\sigma \varphi} \cdot \Delta_g \alpha \\
	\Delta_g (e^{\sigma \varphi})
	& = e^{\sigma \varphi} [\sigma \Delta_g \varphi + \sigma^2 \agl[\df \varphi, \df \varphi]] \\
	\agl[\df (e^{\sigma \varphi}), \df \alpha]
	& = \sigma e^{\sigma \varphi} \agl[\df \varphi, \df \alpha]
	\end{aligned}\right.
	\end{equation}
	so
	\begin{align*}
	(\square_g + q) \beta
	& = e^{\sigma \varphi} \{ (\square_g + q) \alpha + 2\sigma [\varphi_t \alpha_t - \agl[\df \varphi, \df \alpha]] + [\sigma \square_g \varphi + \sigma^2 \varphi_t^2 - \sigma^2 \agl[\df \varphi, \df \varphi]] \alpha \} \\
	& = e^{\sigma \varphi} \{ (\square_g + q) \alpha + 2\sigma \agl[\df \varphi, \df \alpha]_{\hat g} + [\sigma \square_g \varphi + \sigma^2 \agl[\df \varphi, \df \varphi]_{\hat g}] \alpha \},
	\end{align*}
	where $\hat g = (\df t)^2 - g$.

	It can be checked that
	\[
	|\agl[\df F,\df G]| \leq \sqrt{\agl[\df F,\df \overline F]} \cdot \sqrt{\agl[\df G,\df \overline G]} \leq C_g |\nabla_x F| \cdot |\nabla_x G|
	\]
	for some constant $C_g$ and for any functions $F$, $G \in C^1(\Rn;\mathbb C)$.
	When $\sigma$ is large enough, i.e.~$\sigma^2 > 2\nrm[L^\infty]{q}$, we compute the integral of $\overline{\beta_t} (\square_g \beta + \sigma^2 \beta)$ as follows,
	\begin{align}
	& \int_{Q_\tau} \overline{\beta_t} (\square_g \beta + \sigma^2 \beta) \dif V_g \nonumber\\
	\lesssim & \int_{Q_\tau} \overline{\beta_t} \cdot (\square_g + q) \beta + (\sigma^2 - q) \overline{\beta_t} \beta \dif V_g \nonumber\\
	= & \int_{Q_\tau} \overline{\beta_t} \cdot e^{\sigma \varphi} \big\{ (\square_g + q) \alpha + 2\sigma \agl[ \df \varphi, \df \alpha ]_{\hat g} + [\sigma \square_g \varphi + \sigma^2 \agl[\df \varphi, \df \varphi]_{v g}] \alpha \big\} + (\sigma^2 - q) \overline{\beta_t} \beta \dif V_g \nonumber\\
	\lesssim & \int_{Q_\tau} |\beta_t| \cdot e^{\sigma \varphi} \big[ |(\square_g + q) \alpha| + |\nabla_{x,t} \varphi| \cdot \sigma |\nabla_{x,t} \alpha| +  (|\nabla_{x,t} \varphi| + |\square_g \varphi|) \cdot \sigma^2 |\alpha| \big] \nonumber\\
	& \quad + \sigma |\beta_t| \cdot \sigma |\beta| \dif V_g \nonumber\\
	\lesssim & \ C_\varphi \int_{Q_\tau} (|\beta_t|^2 + e^{2\sigma \varphi} |(\square_g + q) \alpha|^2) + (\sigma |\beta_t|^2 + \sigma e^{2\sigma \varphi} |\nabla_{x,t} \alpha|^2) \nonumber\\
	& \quad + ( \sigma |\beta_t|^2 + \sigma^3 e^{2\sigma \varphi} |\alpha|^2 ) + (\sigma |\beta_t|^2 + \sigma^3 |\beta|^2) \dif V_g \nonumber\\
	\lesssim & \int_{Q_\tau} e^{2\sigma \varphi} \big( |(\square_g + q) \alpha|^2 + \sigma |\nabla_{x,t} \alpha|^2 + \sigma^3 \alpha^2 \big) + \sigma |\beta_t|^2 + \sigma^3 |\beta|^2 \dif V_g \nonumber\\
	\lesssim & \int_{Q_\tau} e^{2\sigma \varphi} \big( |(\square_g + q) \alpha|^2 + \sigma |\nabla_{x,t} \alpha|^2 + \sigma^3 \alpha^2 \big) + \sigma e^{2\sigma \varphi} (|\alpha_t|^2 + \sigma^2 |\alpha|^2) + \sigma^3 e^{2\sigma \varphi} |\alpha|^2 \dif V_g \nonumber\\
	\lesssim & \ \nrm[L^2(Q_\tau;g)]{e^{\sigma \varphi} (\square_g + q) \alpha}^2 + \sigma \nrm[L^2(Q_\tau;g)]{e^{\sigma \varphi} \nabla_{x,t} \alpha}^2 + \sigma^3 \nrm[L^2(Q_\tau;g)]{e^{\sigma \varphi} \alpha}^2. \label{eq:3.2Temp2-ACR2019}
	\end{align}
	Similarly, we have
	\begin{align}
	& \int_{Q_\tau} \beta_t (\overline{\square_g \beta + \sigma^2 \beta}) \dif V_g \nonumber\\
	\leq & \ C_\varphi ( \nrm[L^2(Q_\tau;g)]{e^{\sigma \varphi} (\square_g + q) \alpha}^2 + \sigma \nrm[L^2(Q_\tau;g)]{e^{\sigma \varphi} \nabla_{x,t} \alpha}^2 + \sigma^3 \nrm[L^2(Q_\tau;g)]{e^{\sigma \varphi} \alpha}^2 ). \label{eq:3.2Temp3-ACR2019}
	\end{align}
	Also, the $f_\sigma(\tau)$ can be estimated as
	\begin{align}
	f_\sigma(\tau) 
	& = \int_{\Gamma_\tau} \big( \sigma^2 |\beta|^2 + |\beta_t|^2 + \agl[\df \beta, \overline{\df \beta}] \big) |g|^{1/2} \dif S \nonumber\\
	& \leq C_\varphi \int_{\Gamma_\tau} \sigma^2 e^{2\sigma \varphi} |\alpha|^2 + e^{2\sigma \varphi} (|\nabla_{x,t} \alpha|^2 + \sigma^2 |\alpha|^2) \dif S_g \nonumber\\
	& \leq C_\varphi (\nrm[L^2(\Gamma_\tau;g)]{e^{\sigma \varphi} \nabla_{x,t} \alpha}^2 + \sigma^2 \nrm[L^2(\Gamma_\tau;g)]{e^{\sigma \varphi} \alpha}^2). \label{eq:3.2Temp4-ACR2019}
	\end{align}
	Note the metric $g$ \sq{equals} to the usual Euclidean metric on the boundary $\Sigma_+ \cup \Sigma_-$, so $\partial_{\nu,g}$ \sq{equals} to $\partial_{\nu}$.
	Then the integral on the boundary $\Sigma_{+,\tau}$ is
	\begin{align*}
	& \int_{\Sigma_{+,\tau}} [(\partial_{\nu,g} \beta) \overline{\beta_t} + (\overline{\partial_{\nu,g} \beta}) \beta_t] |g|^{1/2} \dif S \\
	\lesssim & \int_{\Sigma_{+,\tau}} |\partial_{\nu} \beta|^2 + |\beta_t|^2 \dif S_{g'}
	\lesssim \int_{\Sigma_{+,\tau}} e^{2\sigma \varphi} |\partial_{\nu} \alpha + \sigma \partial_{\nu} \varphi \cdot \alpha|^2 + e^{2\sigma \varphi} |\alpha_t + \sigma \varphi_t \alpha|^2 \dif S_{g'} \\
	\lesssim & \int_{\Sigma_{+,\tau}} e^{2\sigma \varphi} |\partial_{\nu} \alpha|^2 + e^{2\sigma \varphi} |\alpha_t|^2 + \sigma ^2 e^{2\sigma \varphi} |\alpha|^2 \dif S_{g'} \\
	= & \ \nrm[L^2(\Sigma_{+,\tau}, g')]{e^{\sigma \varphi} \partial_{\nu} \alpha}^2 + \nrm[L^2(\Sigma_{+,\tau}, g')]{e^{\sigma \varphi} \alpha_t}^2 + \sigma^2 \nrm[L^2(\Sigma_{+,\tau}, g')]{e^{\sigma \varphi} \alpha}^2.
	\end{align*}

	The positive-definiteness of the Riemannian metric gives $c |\nabla \omega|^2 \leq \agl[\df \omega,\df \omega] = 1 \leq C |\nabla \omega|^2$, so $0 < 1/C \leq |\nabla \omega|^2 \leq 1/c +\infty$.
	Combining \eqref{eq:3.2Temp1-ACR2019}, \eqref{eq:3.2Temp2-ACR2019}, \eqref{eq:3.2Temp3-ACR2019} and \eqref{eq:3.2Temp4-ACR2019} and integrating from $\tau = 1$ to $\tau = T$, we obtain
	\begin{align}
	& \int_{\Gamma_g} (\calX \beta) |g|^{1/2} \dif S \nonumber \\
	\lesssim & \ \int_{1}^T f_\sigma(\tau) \dif \tau + \nrm[L^2(Q_+;g)]{e^{\sigma \varphi} (\square_g + q) \alpha}^2 + \sigma \nrm[L^2(Q_+;g)]{e^{\sigma \varphi} \nabla_{x,t} \alpha}^2 + \sigma^3 \nrm[L^2(Q_+;g)]{e^{\sigma \varphi} \alpha}^2 \nonumber \\
	& + \nrm[L^2(\Sigma_{+})]{e^{\sigma \varphi} \partial_{\nu} \alpha}^2 + \nrm[L^2(\Sigma_{+})]{e^{\sigma \varphi} \alpha_t}^2 + \sigma^2 \nrm[L^2(\Sigma_{+})]{e^{\sigma \varphi} \alpha}^2 \nonumber\\
	\lesssim & \ \nrm[L^2(Q_+;g)]{e^{\sigma \varphi} (\square_g + q) \alpha}^2 + \sigma \nrm[L^2(Q_+;g)]{e^{\sigma \varphi} \nabla_{x,t} \alpha}^2 + \sigma^3 \nrm[L^2(Q_+;g)]{e^{\sigma \varphi} \alpha}^2 \nonumber\\
	& + \nrm[L^2(\Sigma_{+})]{e^{\sigma \varphi} \partial_{\nu} \alpha}^2 + \nrm[L^2(\Sigma_{+})]{e^{\sigma \varphi} \alpha_t}^2 + \sigma^2 \nrm[L^2(\Sigma_{+})]{e^{\sigma \varphi} \alpha}^2. \label{eq:energyBottomBeta-ACR2019}
	\end{align}
	Finally from \eqref{eq:energyBottomBeta-ACR2019} and \eqref{eq:alphabeta-ACR2019}, we arrive at
	\begin{align}
	& \int_{\Gamma_g} e^{2\sigma \varphi} (\calX \alpha) |g|^{1/2} \dif S \nonumber\\
	\leq & \ C \big[ \nrm[L^2(Q_+;g)]{e^{\sigma \varphi} (\square_g + q) \alpha}^2 + \sigma \nrm[L^2(Q_+;g)]{e^{\sigma \varphi} \nabla_{x,t} \alpha}^2 + \sigma^3 \nrm[L^2(Q_+;g)]{e^{\sigma \varphi} \alpha}^2 \nonumber\\
	& \ + \nrm[L^2(\Sigma_{+})]{e^{\sigma \varphi} \partial_{\nu} \alpha}^2 + \nrm[L^2(\Sigma_{+})]{e^{\sigma \varphi} \alpha_t}^2 + \sigma^2 \nrm[L^2(\Sigma_{+})]{e^{\sigma \varphi} \alpha}^2 \big], \label{eq:energyBottomNof-ACR2019}
	\end{align}
	which is \eqref{eq:energyBottom-ACR2019} with $f = 1$.

	In \eqref{eq:energyBottomNof-ACR2019}, \sq{changing} $\alpha$ to $f\alpha$, we obtain
	\begin{align}
	& \int_{\Gamma_g} e^{2\sigma \varphi} (\calX (f\alpha)) |g|^{1/2} \dif S \nonumber\\
	\leq & \ C \big[ \nrm[L^2(Q_+;g)]{e^{\sigma \varphi} (\square_g + q) (f\alpha)}^2 + \sigma \nrm[L^2(Q_+;g)]{e^{\sigma \varphi} \nabla_{x,t} (f\alpha)}^2 + \sigma^3 \nrm[L^2(Q_+;g)]{e^{\sigma \varphi} (f\alpha)}^2 \nonumber\\
	& \ + \nrm[L^2(\Sigma_{+})]{e^{\sigma \varphi} \partial_{\nu} (f\alpha)}^2 + \nrm[L^2(\Sigma_{+})]{e^{\sigma \varphi} (f\alpha)_t}^2 + \sigma^2 \nrm[L^2(\Sigma_{+})]{e^{\sigma \varphi} (f\alpha)}^2 \big]. \label{eq:energyBottom2f-ACR2019}
	\end{align}
	For $(\square_g + q) (f\alpha)$, we can compute
	\begin{align*}
		|(\square_g + q) (f\alpha)|
		& = |\square_g (f\alpha) + fq\alpha|
		= |(\square_g f)\alpha - 2 \agl[\df f,\df \alpha]+ 2 f_t \alpha_t + f\square_g \alpha + fq\alpha| \\
		& \leq |f\square_g \alpha| + 2 |\agl[\df f,\df \alpha]| + 2 |f_t| |\alpha_t| + |(\square_g f)\alpha| + |fq\alpha| \\
		& \leq \nrm[L^\infty(Q_+)]{f} |\square_g \alpha| + C_g |\nabla_{x,t} f| |\nabla_{x,t} \alpha| + \nrm[H^2(Q_+)]{f} |\alpha| + \nrm[L^\infty(Q_+)]{fq} |\alpha|,
	\end{align*}
	so
	\begin{align*}
		& \ \nrm[L^2(Q_+;g)]{e^{\sigma \varphi} (\square_g + q) (f\alpha)} \\
		\leq & \ C \big[ \nrm[L^2(Q_+;g)]{e^{\sigma \varphi} (\square_g + q) \alpha} + \nrm[L^2(Q_+;g)]{e^{\sigma \varphi} \nabla_{x,t} \alpha} + \nrm[L^2(Q_+;g)]{e^{\sigma \varphi} \alpha} \big].
	\end{align*}
	We can estimate $\nabla_{x,t} (f\alpha)$, $\partial_{\nu} (f\alpha)$ and $(f\alpha)_t$ in similar manners.
	Combining these estimates with \eqref{eq:energyBottom2f-ACR2019}, we finally arrive at \eqref{eq:energyBottom-ACR2019}.
\end{proof}

\section{Carleman estimates} \label{sec:CarlEst-ACR2019}

In order to obtain the results, certain Carleman estimates and strongly pseudoconvex functions are needed.
\sq{For the definition of the notion ``strongly pseudoconvex'', readers may refer to \cite{RSfixed2019}.}

First,we borrow a Carleman estimate with boundary term from \cite[Theorem A.7]{RSfixed2019}.
\sq{To make the paper self-contained, we present the statement of the result below, followed by the arguments that show how we modify the estimate to fit in our Riemannian settings.}
\begin{lem} \label{lem:CarlEst-ACR2019}
	Suppose $\Omega$ is a bounded open set in $\Rn$, $n \geq 2$, with a Lipschitz boundary, and $P(x, D)$ is a second order differential operator on $\overline \Omega$ with bounded coefficients whose principal symbol $p(x,\xi)$ has real $C^1$ coefficients. 
	If $\varphi$ is a smooth function on $\overline \Omega$ with $\nabla \varphi$ never zero in $\overline \Omega$ and $\varphi$ is strongly pseudoconvex with respect to $P(x, D)$ on $\overline \Omega$, then for large enough $\sigma$ and for all $u \in C^2(\overline \Omega)$ one has
	\begin{align*}
		\sigma \int_\Omega e^{2\sigma \varphi} (|\nabla v|^2 + \sigma^2 |v|^2) \dif x + \sigma \int_{\partial \Omega} \nu^j E_j \dif S 
		\leq \int_\Omega e^{2\sigma \varphi} |P v|^2 \dif x
	\end{align*}
	with the constant independent of $\sigma$ and u.
	Here $v = (v_1, \dots,v_n)$ is the outward unit normal to $\partial \Omega$, 
	\begin{equation*}
		E_j = \nu^j A(x,\nabla \alpha,\sigma \alpha) \frac {\partial B} {\partial \xi_j}(x) - \frac {\partial A} {\partial \xi_j} (x,\nabla_x \alpha,\sigma \alpha) [B(x,\nabla \alpha) + \alpha \mathbb G],
	\end{equation*}
	$\alpha = e^{\sigma \varphi} v$, $\mathbb G$ some real valued function independent of $\lambda$, $\sigma$, $v$ and
	\begin{equation*}
	A(x,\xi,\sigma) = p(x,\xi) - \sigma^2 p(x,\nabla \varphi),\quad 
	B(x,\xi)
	= \{p,\varphi\}(x,\xi).
	\end{equation*}
\end{lem}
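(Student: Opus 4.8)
The plan is to observe that this estimate is \cite[Theorem A.7]{RSfixed2019}, so the work is really to match the hypotheses; for completeness I would also recall the mechanism behind it. The first step is the standard conjugation: put $\alpha = e^{\sigma\varphi}v$ and $P_\varphi := e^{\sigma\varphi}P(x,D)e^{-\sigma\varphi}$, so that $Pv = e^{-\sigma\varphi}P_\varphi\alpha$ and, since $p(x,\xi)$ is homogeneous of degree $2$ in $\xi$, the principal symbol of $P_\varphi$ is
\[
p(x,\xi+i\sigma\nabla\varphi) = \big[p(x,\xi) - \sigma^2 p(x,\nabla\varphi)\big] + i\sigma\{p,\varphi\}(x,\xi) = A(x,\xi,\sigma) + i\sigma B(x,\xi).
\]
Thus $P_\varphi = Q_1 + i\sigma Q_2 + \sigma R_1 + R_0$, with $Q_1$, $Q_2$ formally self-adjoint of orders $2$ and $1$ and principal symbols $A$, $B$, and with $R_1$ (first order), $R_0$ (zeroth order) collecting the lower-order part of $P$ together with the symmetrization corrections; these last contributions are exactly what produces the auxiliary function $\mathbb G$ in the statement.

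Next I would expand the square. Writing $\nrm[L^2(\Omega)]{e^{\sigma\varphi}Pv}^2 = \nrm{P_\varphi\alpha}^2$ and integrating by parts in $x$, one obtains
\[
\nrm{P_\varphi\alpha}^2 = \nrm{Q_1\alpha}^2 + \sigma^2\nrm{Q_2\alpha}^2 + \sigma\,i\big([Q_1,Q_2]\alpha,\alpha\big) + (\text{terms of lower order in }\sigma) + \sigma\int_{\partial\Omega}\nu^j E_j\,dS.
\]
Since the principal symbol of $i[Q_1,Q_2]$ is the Poisson bracket $\{A,B\}$, strong pseudoconvexity of $\varphi$ with respect to $P(x,D)$ --- which is precisely the statement that $\{A,B\}(x,\xi) > 0$ on $\{A(x,\xi,\sigma) = B(x,\xi) = 0,\ (\xi,\sigma)\neq 0\}$, with the sign convention of \cite{RSfixed2019} --- together with $\nabla\varphi$ never vanishing, yields a positive-commutator (G\aa rding type) lower bound of the form $\sigma\int_\Omega e^{2\sigma\varphi}(|\nabla v|^2 + \sigma^2|v|^2)\,dx$, once $\sigma$ is large enough to absorb the lower-order terms. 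Because the coefficients of $p$ are only $C^1$, this positivity step must be carried out directly by integration by parts rather than through pseudodifferential calculus; this is the technical heart of \cite[Appendix A]{RSfixed2019}, which I would simply quote.

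The step I expect to be the real obstacle --- and the reason to borrow rather than reprove --- is the exact accounting of boundary terms. Every integration by parts in $x$ on $\Omega$ leaves a contribution on $\partial\Omega$; collecting those coming from the cross term $2\sigma\,\re(Q_1\alpha, iQ_2\alpha)$ and from the self-adjoint reductions of $Q_1$ and $Q_2$, and re-expressing them through $\alpha = e^{\sigma\varphi}v$, one checks that they combine into $\sigma\int_{\partial\Omega}\nu^j E_j\,dS$ with
\[
E_j = \nu^j A(x,\nabla\alpha,\sigma\alpha)\frac{\partial B}{\partial\xi_j}(x) - \frac{\partial A}{\partial\xi_j}(x,\nabla_x\alpha,\sigma\alpha)\big[B(x,\nabla\alpha) + \alpha\mathbb G\big],
\]
exactly as asserted. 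Finally I would verify that the hypotheses hold in the intended application: the lemma will be used with $P(x,D) = \square_g + q$, whose principal symbol is real with smooth (hence $C^1$) coefficients --- and Euclidean outside $B$ by \eqref{asm:gce-ACR2019} --- on a bounded set with Lipschitz boundary, and with a strongly pseudoconvex weight of nonvanishing gradient supplied by the convexity assumption \eqref{ass2-ACR2019}; the estimate then applies verbatim, and the only remaining task, taken up in the subsequent discussion, is to track how the geometric quantities entering $E_j$ depend on the Riemannian metric.
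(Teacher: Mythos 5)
Your proposal is correct and takes essentially the same route as the paper, which likewise gives no independent proof but simply quotes \cite[Theorem A.7]{RSfixed2019} (adding only a remark that the function $\mathbb G$ in $E_j$ can be constructed uniformly on $Q$ rather than separately on $Q_\pm$); your sketch of the conjugation and positive-commutator mechanism is consistent with that reference. The one minor caveat, irrelevant to the lemma itself, is that the operator actually inserted into the lemma later is $v \mapsto (\square_g+q)(fv)$ on the space-time domain $Q_+ \subset \R^{n+1}$, not $\square_g+q$ alone.
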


\sq{A proof of Lemma \ref{lem:CarlEst-ACR2019} can be find in \cite[Theorem A.7]{RSfixed2019}.}

\begin{rem}
	The $E_j$ in the lemma are constructed in \cite[Theorem A.7]{RSfixed2019} and would seem to depend on the domains $Q_\pm$. 
	However, the $E_j$ depend on $\mathbb G$ which itself depends on a function $h$ which satisfies the algebraic identity \cite[eq.~(A.25)]{RSfixed2019}. 
	We can construct the $h$ so that the algebraic identity is satisfied on $Q$ rather than $Q_+$ and $Q_-$ separately. 
	Then the $\mathbb G$ in Lemma \ref{lem:CarlEst-ACR2019} will be the same for $Q_+$ and $Q_-$.
\end{rem}

In our setting, the corresponding ``$\Omega$'' is $Q_+$, 
the outer normal $\nu$ of $\Gamma_g$ pointing into $Q_-$ is $\nu(x) = (\nabla \omega(x), -1) / \sqrt{|\nabla \omega(x)|^2 + 1}$, 
the operator $\square_g$ we are interested in is 
\begin{equation} \label{thm:Operf-ACR2019}
	(\square_g + q) f \ \colon v \ \mapsto \ (\square_g + q) (fv)
\end{equation}
where $f \in C^2(\overline{Q_+})$ is nonzero,
the principal symbol is $p(x,t;\xi,\tau) = f(x,t)[-\tau^2 + g^{jk}(x) \xi_j \xi_k]$,
and the corresponding $A$ and $B$ of Lemma \ref{lem:CarlEst-ACR2019} are
\begin{align*}
A(x,t;\xi,\tau;\sigma) 
& = f(x,t)[-\tau^2 + g^{jk} \xi_j \xi_k - \sigma^2 (-\varphi_t^2 + g^{jk} \varphi_j \varphi_k)], \\
B(x,t;\xi,\tau)
& = f(x,t)(2g^{jk} \xi_k, -2\tau) \cdot (\varphi_j, \varphi_t) = f(x,t)(2g^{jk} \xi_k \varphi_j - 2\tau \varphi_t).
\end{align*}
Now we show that the $\nu^j E_j$ in Lemma \ref{lem:CarlEst-ACR2019} equals $e^{\sigma \varphi} F(\sigma v,X^1 v, \cdots,X^n v)$ for some quadratic form $F$ with smooth coefficients depending on $x$ and $t$.
According to Lemma \ref{lem:CarlEst-ACR2019} and Lemma \ref{lem:Claim1Equ-ACR2019}, we can compute the corresponding $\nu^j E_j$ as
\begin{align*}
& \ \sqrt{|\nabla \omega(x)|^2 + 1} (\nu^j E_j)/f^2 \\
= & \ \nu^j A(x,t;\nabla_x \alpha,\alpha_t;\sigma \alpha) \frac {\partial B} {\partial \xi_j}(x,t) - \nu^j \frac {\partial A} {\partial \xi_j} (x,t;\nabla_x \alpha,\alpha_t;\sigma \alpha) [B(x,t;\nabla_x \alpha, \alpha_t) + g \alpha] \\
& \ + \nu^{n+1} A(x,t;\nabla_x \alpha,\alpha_t;\sigma \alpha) \frac {\partial B} {\partial \tau}(x,t) \\
& \ - \nu^{n+1} \frac {\partial A} {\partial \tau} (x,t;\nabla_x \alpha,\alpha_t;\sigma \alpha) [B(x,t;\nabla_x \alpha, \alpha_t) + g \alpha] \\
= & \ \omega_j [-\alpha_t^2 + g^{jk} \alpha_j \alpha_k - \sigma^2 \alpha^2 (g^{jk} \varphi_j \varphi_k - \varphi_t^2)] (2g^{jk} \varphi_k) \\
& \ - \omega_j 2g^{jk} \alpha_k (2g^{jk} \alpha_k \varphi_j - 2\alpha_t \varphi_t + g\alpha) \\
& \ + (-1) [-\alpha_t^2 + g^{jk} \alpha_j \alpha_k - \sigma^2 \alpha^2 (g^{jk} \varphi_j \varphi_k - \varphi_t^2)] (-2\varphi_t) \\
& \ - (-1) (-2\alpha_t) (2g^{jk} \alpha_k \varphi_j - 2\alpha_t \varphi_t + g\alpha) \\
= & \ 2(X \varphi) [-\alpha_t^2 + g^{jk} \alpha_j \alpha_k - \sigma^2 \alpha^2 (g^{jk} \varphi_j \varphi_k - \varphi_t^2)] - 2(X \alpha) (2g^{jk} \alpha_k \varphi_j - 2\alpha_t \varphi_t + g\alpha) \\
= & \ 2(X \varphi) [\sum_m (X^m \alpha) (X^m \alpha) - 2\alpha_t (X\alpha) - \sigma^2 \alpha^2 (g^{jk} \varphi_j \varphi_k - \varphi_t^2)] \\
& \ - 4(X \alpha) [\sum_m (X^m \alpha) (X^m \varphi) - \alpha_t (X\varphi) - \varphi_t (X\alpha) + g\alpha/2] \\
= & \ 2(X \varphi) [\sum_m (X^m \alpha)^2 - \sigma^2 \alpha^2 (g^{jk} \varphi_j \varphi_k - \varphi_t^2)] - 4(X \alpha) [\sum_m (X^m \alpha) (X^m \varphi) - \varphi_t (X\alpha) + g\alpha/2] \\
& = \tilde F(\sigma \alpha,X^1 \alpha, \cdots,X^n \alpha),
\end{align*}
where $\tilde F$ is some quadratic form in its variables with smooth coefficients depending on $x$ and $t$.
Due to $\alpha = e^{\sigma \varphi} v$, and noticing that $X \alpha = e^{\sigma \varphi} (X v + (X\varphi) \sigma v)$, we can find another form $F$ such that
\begin{equation} \label{eq:FQua-ACR2019}
	\nu^j E_j = \frac {f^2(x,t)\tilde F(\sigma \alpha,X^1 \alpha, \cdots,X^n \alpha)} {\sqrt{|\nabla \omega(x)|^2 + 1}} = e^{2\sigma \varphi} F(\sigma v,X^1 v, \cdots,X^n v).
\end{equation}
Here, the $F$ is the desired quadratic form with smooth coefficients depending on $x$ and $t$.
Later, we shall represent $F(\sigma v,X^1 v, \cdots,X^n v)$ by $F(\sigma v,X^m v)$ for simplicity.
One should pay special attention to the fact that \sq{the} differential operators $X^m$ in \eqref{eq:FQua-ACR2019} are all tangential to $\Gamma_g$, and later we shall \sq{control} $\int_{\Gamma_g} \nu^j E_j \dif S$ \sq{with} norms of \sq{certain} one-forms over the hypersurface $\Gamma_g$.

The following result, which is a Riemannian counterpart of \cite[Lemma 3.1]{RSfixed2019March}, constructs a suitable strongly pseudoconvex function for $\square_g$ starting from a strictly convex function in $(\overline{B},g)$.

\begin{lem} \label{lem:weight-ACR2019}
	Assume $\tilde \psi \in C^2(\overline B)$ \sq{has no critical points in $\overline B$} and is strictly convex for all $x \in \overline B$ in terms of the metric $g$, i.e.~the Hessian matrix $\tilde \psi_{jk} - \tilde \psi_i \Gamma^i_{jk}$ is positive definite.
Let $\psi(x) = \tilde \psi(x) + (\sup_B \tilde \psi - 2 \inf_B \tilde \psi)$ and let $\phi(x,t) = e^{\iota \psi(x)} - (t - \omega(x))^2/2$.
Then when $\iota$ and $\lambda$ are large enough, $\varphi = e^{\lambda \phi}$ is strongly pseudoconvex for $\square_q$ near $\overline{Q}$.
\sq{Moreover, there exists a number $T$ (given by \eqref{eq:vofT-ACR2019}), depending on $\iota$, $\psi$ and $\omega$, such that}
the smallest value of $\varphi$ on $\Gamma_g$ is strictly larger than the largest value of $\varphi$ on $\Gamma_T \cup \Gamma_{-T}$, i.e.,
\begin{equation} \label{eq:varphiSmallBig-ACR2019}
\inf\limits_{\Gamma_g} \varphi > \sup\limits_{\Gamma_T \cup \Gamma_{-T}} \varphi,
\end{equation}
and the function
\begin{equation} \label{eq:defH-ACR2019}
h(\sigma) := \sup_{x \in \overline B} \int_{-T}^T e^{2\sigma[\varphi(x,t)-\varphi(x,\omega(x))]} \dif t
\end{equation}
satisfies $\lim_{\sigma \to +\infty} h(\sigma) = 0$.
\end{lem}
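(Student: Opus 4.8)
\emph{Outline of the proof.} The three assertions are established in turn. The plan is: (i) show that $\varphi=e^{\lambda\phi}$ is strongly pseudoconvex, following the scheme of \cite[Lemma 3.1]{RSfixed2019March} adapted to $\square_g$; (ii) deduce the gap \eqref{eq:varphiSmallBig-ACR2019} and read off an explicit value of $T$; (iii) dominate $h(\sigma)$ by an $x$-independent integral and let $\sigma\to+\infty$. A fact used throughout is that $\phi$, hence $\varphi$, has non-vanishing gradient on $\ol B\times\R$: since $\p_t\phi=-(t-\omega)$, if $\p_t\phi(x,t)=0$ then $t=\omega(x)$, where $\nabla_x\phi=\iota e^{\iota\psi}\nabla\psi\neq 0$ because $\psi$ has no critical point in $\ol B$; consequently $\nabla_{x,t}\varphi=\lambda e^{\lambda\phi}\nabla_{x,t}\phi\neq 0$ as well.

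For (i), recall that strong pseudoconvexity depends only on the principal symbol, which for the operator in \eqref{thm:Operf-ACR2019} is $f(x,t)\bigl(-\tau^2+g^{jk}(x)\xi_j\xi_k\bigr)$; as $f$ is nonvanishing it suffices to treat $p(x,t;\xi,\tau)=-\tau^2+g^{jk}(x)\xi_j\xi_k$. I would compute $\{p,\phi\}$ and $\{p,\{p,\phi\}\}$ and verify Hörmander's conditions on the characteristic set: along a null bicharacteristic of $p$ through a point with $(\xi,\tau)\neq 0$ and $\{p,\phi\}=0$, the quantity $\{p,\{p,\phi\}\}$ is the sum of a nonnegative term of size $\iota^2 e^{\iota\psi}$, of $\iota e^{\iota\psi}$ times the $g$-Hessian $(\psi_{jk}-\psi_i\Gamma^i_{jk})$ of $\psi$ contracted with the spatial component of the bicharacteristic direction (which is nonzero on the characteristic set), and of further contributions that are bounded uniformly on the compact region under consideration. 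Strict convexity of $\tilde\psi$ makes the $g$-Hessian term strictly positive, so for $\iota$ large it dominates; the features absent in the Euclidean case of \cite{RSfixed2019March} are precisely that convexity is measured by the $g$-Hessian and that differentiating the $g^{jk}$ produces additional lower-order terms, all absorbed once $\iota$ is large. This gives pseudoconvexity of $\phi$, and the classical fact that $e^{\lambda\phi}$ is strongly pseudoconvex for large $\lambda$ whenever $\phi$ is pseudoconvex with non-vanishing gradient then furnishes the claim near $\ol Q$. (The normalisation $\psi=\tilde\psi+(\sup_B\tilde\psi-2\inf_B\tilde\psi)$, which gives $0<\inf_{\ol B}\psi$ and $\sup_{\ol B}\psi=2\inf_{\ol B}\psi$, keeps the $T$ of \eqref{eq:vofT-ACR2019} comparable to $e^{\iota\inf_{\ol B}\psi}$, so that $\iota$ can be taken large enough for the estimate to hold on all of $\ol B\times[-T,T]$.) I expect this computation to be the main obstacle.

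For (ii): on $\Gamma_g$ one has $t=\omega(x)$, so $\phi(x,\omega(x))=e^{\iota\psi(x)}$, and since $u\mapsto e^{\lambda u}$ is increasing, $\inf_{\Gamma_g}\varphi=e^{\lambda e^{\iota\inf_{\ol B}\psi}}$. Put $M:=\sup_{x\in\ol B}|\omega(x)|<\infty$. For $|t|=T>M$ and $|x|\le 1$ one has $|t-\omega(x)|\ge T-M$, hence $\phi(x,t)=e^{\iota\psi(x)}-\tfrac12(t-\omega(x))^2\le e^{\iota\sup_{\ol B}\psi}-\tfrac12(T-M)^2$, so $\sup_{\Gamma_T\cup\Gamma_{-T}}\varphi\le e^{\lambda(e^{\iota\sup_{\ol B}\psi}-(T-M)^2/2)}$. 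Therefore \eqref{eq:varphiSmallBig-ACR2019} holds as soon as $(T-M)^2>2(e^{\iota\sup_{\ol B}\psi}-e^{\iota\inf_{\ol B}\psi})$, and one may take
\begin{equation} \label{eq:vofT-ACR2019}
T:=\sup_{x\in\ol B}|\omega(x)|+\sqrt{2\bigl(e^{\iota\sup_{\ol B}\psi}-e^{\iota\inf_{\ol B}\psi}\bigr)}+1,
\end{equation}
which depends only on $\iota,\psi$ and $\omega$.

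For (iii), fix $\iota,\lambda$ and $T$ as above. Writing $s:=t-\omega(x)$ and using $\phi(x,t)=\phi(x,\omega(x))-\tfrac12 s^2$, for $x\in\ol B$ and $t\in[-T,T]$ we obtain
\[
\varphi(x,t)-\varphi(x,\omega(x))=e^{\lambda e^{\iota\psi(x)}}\bigl(e^{-\lambda s^2/2}-1\bigr)\le e^{\lambda e^{\iota\inf_{\ol B}\psi}}\bigl(e^{-\lambda s^2/2}-1\bigr)=:-\rho(s),
\]
using $e^{-\lambda s^2/2}-1\le 0$ and $e^{\iota\psi(x)}\ge e^{\iota\inf_{\ol B}\psi}$; here $\rho(s)\ge 0$, with $\rho(s)>0$ for $s\neq 0$. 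Since $|\omega(x)|\le M\le T$, the substitution $t\mapsto s$ sends $[-T,T]$ into $[-2T,2T]$, so
\[
h(\sigma)\le\int_{-2T}^{2T}e^{-2\sigma\rho(s)}\dif s.
\]
The right-hand side is independent of $x$, its integrand is $\le 1$ on $[-2T,2T]$ and converges to $0$ for every $s\neq 0$, so by dominated convergence $\int_{-2T}^{2T}e^{-2\sigma\rho(s)}\dif s\to 0$, whence $h(\sigma)\to 0$ as $\sigma\to+\infty$. This completes the plan.
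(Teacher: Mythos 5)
Your parts (ii) and (iii) are correct and essentially follow the paper's route: the same choice of $T$ up to constants (your explicit $\sqrt{2}$ factor is actually cleaner than the paper's \eqref{eq:vofT-ACR2019}), the same comparison of $\phi$ on $\Gamma_g$ versus $\Gamma_{\pm T}$, and for $h(\sigma)$ the same pointwise bound $\varphi(x,t)-\varphi(x,\omega(x))\le e^{\lambda e^{\iota\psi(x)}}(e^{-\lambda(t-\omega)^2/2}-1)$; where the paper derives a quantitative rate $h(\sigma)\lesssim \sigma^{-1/2}+e^{-\sigma}$ by an elementary inequality, your dominated-convergence argument gives the stated limit and is adequate, since only $h(\sigma)\to0$ is used later.

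The genuine gap is in part (i), which is the analytic core of the lemma and is only outlined. Beyond deferring the computation of $\{p,\{p,\phi\}\}$, the outline as stated contains a step that would fail: you claim the contributions other than the $g$-Hessian term are ``bounded uniformly on the compact region under consideration,'' but the region is $\overline B\times[-T,T]$ with $T$ determined by $\iota$ (your own \eqref{eq:vofT-ACR2019} gives $T\sim e^{\iota\sup_{\overline B}\psi/2}$), so the term $(t-\omega)\,(\omega_{jk}-\omega_i\Gamma^i_{jk})\,\dot x^j\dot x^k$ appearing in $\{p,\{p,\phi\}\}$ can be of size comparable to $T$ and is \emph{not} uniformly bounded in $\iota$; naively it competes with the dominating term $\iota e^{\iota\psi}(\psi_{jk}-\psi_i\Gamma^i_{jk})\dot x^j\dot x^k$. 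The paper resolves exactly this point: after using $\{p,\phi\}=0$ to discard the nonnegative square $\iota^2 e^{\iota\psi}(\psi_i\dot x^i)^2=e^{-\iota\psi}(t-\omega)^2(\dot t-\omega_i\dot x^i)^2$ and $(\dot t)^2=g_{jk}\dot x^j\dot x^k$, one factors out $e^{\iota\psi(x)}$ and must show $e^{-\iota\psi(x)}|T\pm\omega(x)|\le 2$ uniformly in $x$ for all large $\iota$; this is where the normalization $\psi=\tilde\psi+(\sup_B\tilde\psi-2\inf_B\tilde\psi)$, i.e.\ $\sup_{\overline B}\psi\le 2\inf_{\overline B}\psi$, is used, since then $e^{-\iota\psi(x)}e^{\iota\psi(z)/2}\le e^{\iota(\psi(z)-2\psi(x))/2}\le 1$. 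Your parenthetical remark gestures at this interplay between $T$, $\iota$ and the shift of $\psi$, but it is asserted rather than proved, and without it the claim ``the Hessian term dominates for $\iota$ large'' is not justified. To complete the proof you need to carry out the bicharacteristic computation (using that the projection is a geodesic of $-(\df t)^2+g$, so the covariant Hessians of $\psi$ and $\omega$ appear), verify the bound $e^{-\iota\psi}|T\pm\omega|\le 2$ for large $\iota$, and then invoke \cite[Propositions A.3 and A.5]{RSfixed2019} (your ``classical fact'') to pass from pseudoconvexity of the level sets of $\phi$, together with $\nabla_{x,t}\phi\neq0$, to strong pseudoconvexity of $e^{\lambda\phi}$ for $\lambda$ large.
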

\begin{proof}
	\sq{First, we check that $|\nabla_{(x,t)} \phi| \neq 0$ in $\overline{Q^+}$.
	The time derivative $\partial_t \phi = \omega(x) - t$, so when $(x,t) \in \overline{Q^+} \backslash \Gamma_g$, we have $\partial_t \phi \neq 0$, so $|\nabla_{(x,t)} \phi| \neq 0$ in $\overline{Q^+}$.
	On $\Gamma_g$ we know $\omega(x) - t = 0$, so $\nabla_x \phi = \iota e^{\iota \psi} \nabla \psi - (\omega(x) - t) \nabla \omega = \iota e^{\iota \psi} \nabla \psi(x)$ holds on $\Gamma_g$.
	Because $\tilde \psi$ (and so $\psi$) has no critical points in $\overline B$, we conclude $|\nabla_x \phi| \neq 0$ on $\Gamma_g$.
	In total, $|\nabla_{(x,t)} \phi| \neq 0$ in $\overline{Q^+}$.}

	According to Proposition A.3 and A.5 in \cite{RSfixed2019}, we only need to show that the level surfaces of $\phi$ are pseudoconvex in $Q_+$. 
	That is to say, we need to show
	\[
	\{p, \{p, \phi\}\}(x,t; \xi,\tau) > 0
	\]
	whenever
	\[
	(x,t) \in Q_+, \, (\xi,\tau) \in \R^{n+1} \backslash \{0\}, \, p(x,t; \xi,\tau) = 0, \, \{p, \phi\}(x,t; \xi,\tau) = 0,
	\]
	where $p(x,t; \xi, \tau) := g^{jk} \xi_j \xi_k - \tau^2$ is the principal symbol of $\square_g$.
	\smallskip

	From $p = 0$ we obtain $g^{jk} \xi_j \xi_k = \tau^2$.
	Denote the integral curve of the Hamiltonian of $p$ as $(x(s),t(s),\xi(s),\tau(s))$,
	which satisfy $\dot x^j(s) = 2 g^{jk} \xi_k$ and $\dot t(s) = -2\tau$.
	Hence from $\{p,\phi\} = 0$ we obtain $\iota \psi_i \dot x^i = e^{-\iota \psi} (t - \omega) (\dot t - \omega_i \dot x^i)$, so
	\begin{equation*}
	(\dot t)^2 = 4\tau^2 = 4 g^{\ell m} \xi_\ell \xi_m = 4 g^{\ell j} g_{j k} g^{km} \xi_\ell \xi_m = (2 g^{\ell j} \xi_\ell) g_{jk} (2 g^{km} \xi_m) = g_{jk} \dot x^j \dot x^k.
	\end{equation*}
	It is known that the projection of this integral curve, i.e.~$(x(s),t(s))$, is a geodesic in terms of $-(\df t)^2 + g$, so
	\begin{equation*}
	\ddot x^i + \Gamma^i_{jk} \dot x^j \dot x^k = 0,
	\quad\text{and}\quad
	\ddot t = 0.
	\end{equation*}
	Combining these analysis, we can compute $\{p,\{p,\phi\}\}$ as follows,
	\begin{align}
		\{p,\{p,\phi\}\}
		& = \frac {\df^2} {\df s^2} \big( \phi(x(s),t(s)) \big)
		= \phi_i \ddot x^i + \phi_{jk} \dot x^j \dot x^k + \phi_{tt} (\dot t)^2 + \phi_t \ddot t \nonumber \\
		& = (\phi_{jk} - \phi_i \Gamma^i_{jk}) \dot x^j \dot x^k - (\dot t)^2 \nonumber \\
		& = \big\{ [(\iota \psi_{jk} + \iota^2 \psi_j \psi_k) e^{\iota \psi} + \omega_{jk} (t - \omega) - \omega_j \omega_k] - [\iota \psi_i e^{\iota \psi} + \omega_i (t - \omega)] \Gamma^i_{jk} \big\} \dot x^j \dot x^k \nonumber \\
		& \quad - (\dot t)^2 \nonumber \\
		& = \big[ \iota e^{\iota \psi} (\psi_{jk} - \psi_i \Gamma^i_{jk}) + (t - \omega) (\omega_{jk} - \omega_i \Gamma^i_{jk}) - \omega_j \omega_k \big] \dot x^j \dot x^k - (\dot t)^2 \nonumber \\
		& \quad + e^{-\psi}  (t - \omega)^2 (\dot t - \omega_i \dot x^i)^2 \nonumber \\
		& \geq \big[ \iota e^{\iota \psi} (\psi_{jk} - \psi_i \Gamma^i_{jk}) + (t - \omega) (\omega_{jk} - \omega_i \Gamma^i_{jk}) - \omega_j \omega_k \big] \dot x^j \dot x^k - (\dot t)^2 \nonumber \\
		& = \big[ \iota e^{\iota \psi} (\psi_{jk} - \psi_i \Gamma^i_{jk}) + (t - \omega) (\omega_{jk} - \omega_i \Gamma^i_{jk}) - \omega_j \omega_k - g_{jk} \big] \dot x^j \dot x^k \nonumber \\
		& \geq e^{\iota \psi} \big[ \iota (\psi_{jk} - \psi_i \Gamma^i_{jk}) - e^{-\iota \psi} |T \pm \omega| |\omega_{jk} - \omega_i \Gamma^i_{jk}| - e^{-\iota \psi} (\omega_j \omega_k + g_{jk}) \big] \dot x^j \dot x^k. \nonumber 
	\end{align}
Note that the condition $\tilde \psi_{jk} - \tilde \psi_i \Gamma^i_{jk}$ being positive definite implies $\psi_{jk} - \psi_i \Gamma^i_{jk}$ is also positive definite.
We shall set a value for $T$ such that $e^{-\iota \psi(x)} |T \pm \omega(x)|$ is uniformly bounded (in terms of $x$) by a constant, e.g.,
\begin{equation} \label{eq:Tw2-ACR2019}
e^{-\iota \psi(x)} |T \pm \omega(x)| < 2,
\end{equation}
for the purpose of guaranteeing $\{p,\{p,\phi\}\} > 0$ by taking $\iota$ to be large enough.
But before that, we first investigate the requirement \eqref{eq:varphiSmallBig-ACR2019}.
\sq{We set
\begin{equation} \label{eq:vofT-ACR2019}
T = T(\iota, \psi, \omega) := \sup_{x \in \overline B} \big( e^{\iota \psi(x)} - \inf\limits_{y \in \overline B} e^{\iota \psi(y)} \big)^{1/2} + \sup_{x \in \overline B} |\omega(x)| + 1,
\end{equation}
then $T - |\omega(x)| > 0$ and so $0 < T - |\omega(x)| \leq T \pm \omega(x)$.
Hence,
\begin{align}
\inf_{x \in \overline B} (T \pm \omega(x))^2
& \geq \inf_{x \in \overline B} (T - |\omega(x)|)^2
= (T - \sup_{x \in \overline B} |\omega(x)|)^2 \nonumber \\
& = \big[ \sup_{x \in \overline B} \big( e^{\iota \psi(x)} - \inf_{y \in \overline B} e^{\iota \psi(y)} \big)^{1/2} + 1 \big]^2 \nonumber \\
& > \sup_{x \in \overline B} e^{\iota \psi(x)} - \inf\limits_{y \in \overline B} e^{\iota \psi(y)}. \label{eq:voT-ACR2019}
\end{align}
Therefore, one can have
\begin{align*}
	\sup\limits_{\Gamma_T \cup \Gamma_{-T}} \phi
	& = \sup_{x \in \overline B} [e^{\iota \psi(x)} - (T \pm \omega(x))^2]
	\leq \sup_{x \in \overline B} e^{\iota \psi(x)} - \inf_{x \in \overline B} (T \pm \omega(x))^2 \\
	& < \sup_{x \in \overline B} e^{\iota \psi(x)} - (\sup_{x \in \overline B} e^{\iota \psi(x)} - \inf\limits_{y \in \overline B} e^{\iota \psi(y)}) \qquad (\text{by~} \eqref{eq:voT-ACR2019})\\
	& = \inf\limits_{y \in \overline B} e^{\iota \psi(y)}
	= \inf\limits_{(y,t) \in \Gamma_g} [e^{\iota \psi(y)} - (t - \omega(y)^2/2)] \\
	& = \inf\limits_{\Gamma_g} \phi,
\end{align*}
which implies \eqref{eq:varphiSmallBig-ACR2019} since $\varphi = e^{\lambda \phi}$ and $\lambda$ is a positive number.}

\sq{
Now we show \eqref{eq:defH-ACR2019}.
We have
\begin{align*}
	\varphi(x,t)-\varphi(x,\omega(x))
	& = e^{\lambda e^{\iota \psi(x)}} [e^{-\lambda (t - \omega(x))^2/2} - 1]
	\leq e^{-\lambda (t - \omega(x))^2/2} - 1.
\end{align*}
The last inequality is due to $e^{\lambda e^{\iota \psi(x)}} \geq 1$ and $e^{-\lambda (t - \omega(x))^2/2} - 1 \leq 0$.
It can be checked that for $t \in \R$,
\begin{equation} \label{eq:et2-ACR2019}
e^{-t^2/2} - 1 \leq
\begin{cases}
	- \frac 1 2 e^{-1/2} t^2, & |t| < 1, \\
	\frac 1 2 e^{-1/2} - 1 \leq - \frac 1 2, & |t| \geq 1.
\end{cases}
\end{equation}
Hence,
\begin{align*}
h(\sigma)
& \leq \sup_{x \in \overline B} \int_{-T}^{T} e^{2\sigma (e^{-\lambda (t - \omega(x))^2/2} - 1)} \dif t 
= \sup_{x \in \overline B} \int_{-T-\omega(x)}^{T-\omega(x)} e^{2\sigma (e^{-\lambda t^2/2} - 1)} \dif t \\
& \leq \sup_{x \in \overline B} \int_{-T-\omega(x)}^{T-\omega(x)} e^{2\sigma (e^{-t^2/2} - 1)} \dif t \qquad (\because \lambda > 1) \\
& \leq \int_{-1}^{1} e^{-\sigma e^{-1/2} t^2} \dif t + \int_{-T-\sup_{x \in \overline B} |\omega(x)|}^{T + \sup_{x \in \overline B} |\omega(x)|} e^{-\sigma} \dif t \qquad (\text{by~} \eqref{eq:et2-ACR2019}) \\
& \leq \int_{-\infty}^{+\infty} e^{-\sigma e^{-1/2} t^2} \dif t  + 2(T + \sup_{x \in \overline B} |\omega(x)|) e^{-\sigma} \\
& = \sqrt \pi e^{1/4} \sigma^{-1/2} + 2(T + \sup_{x \in \overline B} |\omega(x)|) e^{-\sigma}.
\end{align*}
Therefore, we have \eqref{eq:defH-ACR2019}.}

Now we go back to \eqref{eq:Tw2-ACR2019}.
Recall
\[
\psi(x)
= \tilde \psi(x) + (\sup_B \tilde \psi - 2 \inf_B \tilde \psi)
= (\tilde \psi(x) - \inf_B \tilde \psi) + (\sup_B \tilde \psi - \inf_B \tilde \psi),
\]
which implies $\sup_B \psi \leq 2 \inf_B \psi$.
Combining this with \eqref{eq:vofT-ACR2019}, we can estimate $e^{-\iota \psi(x)} |T \pm \omega(x)|$ as follows,
\begin{align*}
& \ e^{-\iota \psi(x)} |T \pm \omega(x)| \\
= & \ e^{-\iota \psi(x)} |\sup_{x \in \overline B} \big( e^{\iota \psi(x)} - \inf\limits_{y \in \overline B} e^{\iota \psi(y)} \big)^{1/2} + \sup_{x \in \overline B} |\omega(x)| + 1 \pm \omega(x)| \\
\leq & \ \sup_{z \in \overline B} \big( e^{\iota (\psi(z) - 2 \psi(x))} - \inf\limits_{y \in \overline B} e^{\iota (\psi(y) - 2 \psi(x))} \big)^{1/2} + e^{-\iota \psi(x)} (\sup_{\overline B} |\omega| - \inf_{\overline B} |\omega| + 1) \\
\leq & \ 1 + e^{-\iota \psi(x)} (\sup_{\overline B} |\omega| - \inf_{\overline B} |\omega| + 1) \\
\leq & \ 2.
\end{align*}
The last inequality holds when $\iota$ is large enough.
Formula \eqref{eq:Tw2-ACR2019} is proved.
By the positive-definiteness of $\psi_{jk} - \psi_i \Gamma^i_{jk}$, we can conclude $\{p,\{p,\phi\}\}$ is strictly greater than 0 when $\iota$ is large enough.
The proof is complete.
\end{proof}

The following proposition is analogous to \cite[Proposition 3.2]{RSfixed2019} and will be used in the solution of the inverse problem.

\begin{prop} \label{prop:vpmAsm-ACR2019}
	Assume that $g$ satisfies the assumption \eqref{ass2-ACR2019}, and that $v_{\pm} \in H^2(Q_{\pm})$ satisfy 
\begin{equation} \label{eq:vpmAsm-ACR2019}
	(\square_g + q_{1,\pm}) v_\pm = -X \left( v_\pm / a_{-1,\pm} \right)|_{(x,\omega(x))} \cdot u_{2,\pm} \quad \text{in} \quad Q_\pm,
	\end{equation}
	where $a_{-1,\pm} \in C^{\infty}(\Gamma_g)$ and $\norm{q_{1,\pm}}_{L^{\infty}(B)} \leq M$, $\norm{u_{2,\pm}}_{L^{\infty}(Q_{\pm})} \leq M$ for some $M > 0$. Then, if $T$ and $\sigma$ are large enough but fixed, one has 
	\begin{align*}
	& \sum_\pm \int_{\Gamma_g} \calX (v_\pm / a_{-1,\pm}) \, |g|^{1/2} \dif S \\
	\lesssim & \ \int_{\Gamma_g} \calX (v_+ / a_{-1,+} - v_- / a_{-1,-}) \dif S + \sum_\pm (\nrm[H^1(\Sigma_{\pm};g')]{v_\pm}^2 + \nrm[L^2(\Sigma_{\pm};g')]{\partial_{\nu,g} v_\pm}^2)
	\end{align*}
	with constants depending on $M$.
\end{prop}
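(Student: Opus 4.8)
The plan is to run the Bukhgeim--Klibanov scheme of \cite[Proposition 3.2]{RSfixed2019} in the Riemannian setting, combining the Carleman estimate of Lemma~\ref{lem:CarlEst-ACR2019} with the bottom energy estimate of Lemma~\ref{lem:energyBottom-ACR2019} and exploiting that $\Gamma_g$ is characteristic for $\square_g$. Set $w_\pm := v_\pm/a_{-1,\pm}$, so that \eqref{eq:vpmAsm-ACR2019} becomes
\[
(\square_g + q_{1,\pm})(a_{-1,\pm} w_\pm) = -\,X w_\pm|_{(x,\omega(x))}\cdot u_{2,\pm} \quad \text{in } Q_\pm .
\]
Fix the weight $\varphi = e^{\lambda\phi}$, the exponent $\iota$ and the time $T$ supplied by Lemma~\ref{lem:weight-ACR2019}: $\varphi$ is strongly pseudoconvex near $\overline Q$ for $\square_g$ and hence for the operator $w\mapsto(\square_g+q_{1,\pm})(a_{-1,\pm}w)$ (whose principal part differs only by the nonvanishing factor $a_{-1,\pm}$), inequality \eqref{eq:varphiSmallBig-ACR2019} holds, and $h(\sigma)\to 0$ as $\sigma\to\infty$ with $h$ as in \eqref{eq:defH-ACR2019}. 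I also record that $\varphi(x,t)\le\varphi(x,\omega(x))$, since $\phi(x,t)=e^{\iota\psi(x)}-(t-\omega(x))^2/2$ is maximal in $t$ on $\Gamma_g$.

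First I would estimate the source. As $Xw_\pm|_{(x,\omega(x))}$ depends on $x$ alone, Fubini together with $\int e^{2\sigma\varphi(x,t)}\dif t \le e^{2\sigma\varphi(x,\omega(x))}h(\sigma)$, Lemma~\ref{lem:IntEqu-ACR2019}, $\|u_{2,\pm}\|_{L^\infty}\le M$, and $|Xf|^2\lesssim\sum_m|X^m f|^2\lesssim\sqrt{|\nabla\omega|^2+1}\,\calX f$ give
\[
\int_{Q_\pm} e^{2\sigma\varphi}\,\big|(\square_g+q_{1,\pm})(a_{-1,\pm}w_\pm)\big|^2 \dif V_g \;\lesssim\; M^2 h(\sigma)\int_{\Gamma_g} e^{2\sigma\varphi}\,\calX w_\pm\,|g|^{1/2}\dif S .
\]
Since dividing $v_\pm$ by the smooth, bounded-below function $a_{-1,\pm}$ only adds first- and zeroth-order terms, the same bound (plus $\|e^{\sigma\varphi}\nabla_{x,t}w_\pm\|_{L^2(Q_\pm;g)}^2+\|e^{\sigma\varphi}w_\pm\|_{L^2(Q_\pm;g)}^2$) controls $\int_{Q_\pm}e^{2\sigma\varphi}|(\square_g+q_{1,\pm})w_\pm|^2$. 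Inserting this into Lemma~\ref{lem:energyBottom-ACR2019} applied to $w_\pm$ on $Q_\pm$ (with $f\equiv1$, and the analogous statement on $Q_-$), absorbing the $h(\sigma)$-term for $\sigma$ large, and then bounding the resulting volume term $\sigma\int_{Q_\pm}e^{2\sigma\varphi}(|\nabla_{x,t}w_\pm|^2+\sigma^2|w_\pm|^2)$ by the left side of the Carleman estimate of Lemma~\ref{lem:CarlEst-ACR2019} for $w\mapsto(\square_g+q_{1,\pm})(a_{-1,\pm}w)$ (whose right side is the source integral just estimated), one is left with the boundary integral $\sigma\int_{\partial Q_\pm}\nu^j E_j$. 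On the caps $\Gamma_{\pm T}$ one has $\partial_t\varphi=\lambda\varphi(\omega(x)-T)<0$ by \eqref{eq:vofT-ACR2019}, so these terms carry the favorable sign (and are exponentially dominated via \eqref{eq:varphiSmallBig-ACR2019}) and are discarded as in \cite{RSfixed2019}; on $\Sigma_\pm$, where $g$ is Euclidean and $\partial_{\nu,g}=\partial_\nu$, the boundary term is $\lesssim\nrm[H^1(\Sigma_\pm;g')]{v_\pm}^2+\nrm[L^2(\Sigma_\pm;g')]{\partial_{\nu,g}v_\pm}^2$. After a further absorption of the $h(\sigma)$-term this yields, for each sign,
\[
\int_{\Gamma_g} e^{2\sigma\varphi}\,\calX w_\pm\,|g|^{1/2}\dif S \;\lesssim\; -\,\sigma\int_{\Gamma_g}\nu^j E_j[w_\pm]\,\dif S \;+\; \nrm[H^1(\Sigma_\pm;g')]{v_\pm}^2+\nrm[L^2(\Sigma_\pm;g')]{\partial_{\nu,g}v_\pm}^2 ,
\]
with $\nu$ the outward unit normal of $Q_\pm$ along $\Gamma_g$.

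The decisive step is the reflection across $\Gamma_g$. By \eqref{eq:FQua-ACR2019}, $\nu^j E_j[w]=e^{2\sigma\varphi}F(\sigma w,X^1 w,\dots,X^n w)$ on $\Gamma_g$ for one fixed quadratic form $F$ in which only the fields $X^m$ tangential to $\Gamma_g$ occur; moreover $\nu^j E_j$ is odd in $\nu$, and the outward normals of $Q_+$ and $Q_-$ along $\Gamma_g$ are opposite, so the two $\Gamma_g$-contributions add to $\sigma\int_{\Gamma_g}e^{2\sigma\varphi}\big[F(\sigma w_+,X^m w_+)-F(\sigma w_-,X^m w_-)\big]\dif S$. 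Writing $F(\xi)-F(\eta)=B(\xi+\eta,\xi-\eta)$ for the symmetric bilinear form $B$ polarizing $F$, with $\xi-\eta=(\sigma(w_+-w_-),X^m(w_+-w_-))$ and $\xi+\eta=(\sigma(w_++w_-),X^m(w_++w_-))$, Cauchy--Schwarz and Young's inequality bound this difference, for any $\eps>0$, by $\eps\,\sigma\int_{\Gamma_g}e^{2\sigma\varphi}(\calX w_++\calX w_-)+C_\eps\,\sigma\int_{\Gamma_g}e^{2\sigma\varphi}\,\calX(w_+-w_-)$. Adding the two per-sign estimates, fixing $\sigma$ large (as already required for the $h(\sigma)$-absorptions), and then choosing $\eps$ small enough that the $\eps\sigma(\calX w_++\calX w_-)$-term is absorbed on the left, gives
\[
\sum_\pm\int_{\Gamma_g}e^{2\sigma\varphi}\,\calX w_\pm\,|g|^{1/2}\dif S \;\lesssim\; \int_{\Gamma_g}e^{2\sigma\varphi}\,\calX(w_+-w_-)\,|g|^{1/2}\dif S + \sum_\pm\big(\nrm[H^1(\Sigma_\pm;g')]{v_\pm}^2+\nrm[L^2(\Sigma_\pm;g')]{\partial_{\nu,g}v_\pm}^2\big).
\]
Since $\sigma$ is now fixed, $e^{2\sigma\varphi}$ and $|g|^{1/2}$ are bounded above and below on $\Gamma_g$ by positive constants and may be dropped; recalling $w_\pm=v_\pm/a_{-1,\pm}$ this is the claim. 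The main obstacle is precisely this last step: one must verify carefully, using the explicit expression \eqref{eq:FQua-ACR2019} and the characteristic nature of $\Gamma_g$, that the two $\Gamma_g$ boundary terms combine into the \emph{difference} form $F(w_+)-F(w_-)$ — so that the non-coercive $F$ is only ever polarized against $w_+-w_-$ and never used coercively — and to perform the two absorptions (first $\sigma\to\infty$, then $\eps\to0$) in the correct order; checking that the $\Gamma_{\pm T}$ terms are harmless is the only other place where the geometry of $(\overline B,g)$ genuinely enters.
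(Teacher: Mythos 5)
Most of your argument runs along the same lines as the paper: the weighted source bound via $h(\sigma)$, Lemma~\ref{lem:energyBottom-ACR2019}, the Carleman estimate of Lemma~\ref{lem:CarlEst-ACR2019} applied to the operator $w\mapsto(\square_g+q_{1,\pm})(a_{-1,\pm}w)$, the cancellation/polarization of the single quadratic form $F$ from \eqref{eq:FQua-ACR2019} on $\Gamma_g$ (the paper's $|F(a)-F(b)|$ estimate with $M=\sigma^2$ is exactly your $\eps\sim1/\sigma$ absorption), and the two absorptions in the same order. The one place where you genuinely deviate is the treatment of the cap terms $\sigma\int_{\Gamma_{\pm T}}\nu^jE_j\,\dif S$, and that is where your proof has a gap.

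You claim that since $\partial_t\varphi=\lambda\varphi\,(\omega(x)-T)<0$ on $\Gamma_T$, these terms ``carry the favorable sign'' and may be discarded ``as in \cite{RSfixed2019}''. Neither part is right. Computing $\nu^jE_j$ on $\Gamma_T$ (normal $\nu=(0,\dots,0,1)$) from the formulas in Lemma~\ref{lem:CarlEst-ACR2019} gives, up to the factor $f^2$,
\[
-2\varphi_t\bigl(|\alpha_t|^2+\agl[\df\alpha,\df\alpha]\bigr)
+2\sigma^2\varphi_t\,\alpha^2\bigl(\agl[\df\varphi,\df\varphi]-\varphi_t^2\bigr)
+4\alpha_t\agl[\df\alpha,\df\varphi]+2\alpha_t\alpha\,\mathbb G ,
\]
and with the weight of Lemma~\ref{lem:weight-ACR2019} one has $\nabla_x\phi=\iota e^{\iota\psi}\nabla\psi+(t-\omega)\nabla\omega$ with $\nabla\psi\neq0$ everywhere, so on $\Gamma_T$ one generically has $\agl[\df\varphi,\df\varphi]>\varphi_t^2$; then the $\sigma^2\alpha^2$ term is strictly negative and the cross term $4\alpha_t\agl[\df\alpha,\df\varphi]$ cannot be absorbed by the positive first block. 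So the cap form is indefinite and cannot simply be dropped; nor do the Euclidean papers drop it by sign — they control it by an energy estimate. Your parenthetical ``exponentially dominated via \eqref{eq:varphiSmallBig-ACR2019}'' does not close this either: exponential smallness of $e^{2\sigma\varphi}$ on $\Gamma_T$ relative to $\Gamma_g$ is useless unless the unweighted quantities $|\nabla_{x,t}w_\pm|^2+\sigma^2|w_\pm|^2$ on $\Gamma_T$ are themselves bounded by data already present in the inequality, and nothing in your argument provides such a bound (the final estimate may only involve the $\Gamma_g$-difference and lateral traces). This is precisely the role of Lemma~\ref{lem:energyTop-ACR2019}, which you never invoke: it bounds the $H^1(\Gamma_T)$-type norm by $\int_{\Gamma_g}\calX(v/a_{-1})$ plus the lateral data and the source, and then \eqref{eq:varphiSmallBig-ACR2019} yields the weighted version \eqref{eq:pm5-ACR2019} with an exponentially small factor in front of the $\Gamma_g$ term, which is absorbed for $\sigma$ large. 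Inserting that step (between your Carleman step and the reflection step, as in \eqref{eq:pm3-ACR2019}--\eqref{eq:pm8-ACR2019}) repairs the proof; without it the cap terms are unaccounted for.
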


\begin{proof}[Proof of Proposition \ref{prop:vpmAsm-ACR2019}]
	The assumption \eqref{eq:vpmAsm-ACR2019} implies
	\begin{align}
		\nrm[L^2(Q_\pm)]{(\square_g + q_{1,\pm}) v_\pm}
		& = \nrm[L^2(Q_\pm)]{-X |_{\gamma(s)} (v_\pm / a_{-1,\pm}) \cdot u_{2,\pm}} \nonumber\\
		& \leq \nrm[L^\infty(Q_+)]{u_{2,\pm}} \cdot \nrm[L^2(Q_\pm)]{X |_{\gamma(s)} (v_\pm / a_{-1,\pm})} \nonumber\\
		& \lesssim \nrm[L^2(\Gamma_g)]{X (v_\pm / a_{-1,\pm})} \nonumber\\
		& \lesssim \sum_m \nrm[L^2(\Gamma_g)]{X^m (v_\pm/a_{-1,\pm})}. \label{eq:pm4-ACR2019}
	\end{align}
	Likewise, we have
	\begin{align}
	\nrm[L^2(Q_\pm)]{e^{\sigma \varphi} (\square_g + q_{1,\pm}) v_\pm}^2
	& \lesssim h(\sigma) \sum_m \nrm[L^2(\Gamma_g)]{e^{\sigma \varphi} X^m (v_\pm / a_{-1,\pm})}^2, \label{eq:pm6-ACR2019}
	\end{align}
	where $\varphi$ is strongly pseudoconvex for $\square_g$ near $Q$, 
	and $h(\sigma) := \sup\limits_{x \in B} \int_{-T}^{T} e^{2\sigma [\varphi(x,t) - \varphi(x, \omega(x))]} \dif t$.
	Thanks to the assumption \eqref{ass2-ACR2019}, Lemma \ref{lem:weight-ACR2019} can be applied and we can find such a strongly pseudoconvex function $\varphi$.
	
	Combining \eqref{eq:pm4-ACR2019} and Lemma \ref{lem:energyTop-ACR2019}, we can have
	\begin{align*}
	& \int_{\Gamma_T} \big( (\df (v / a_{-1}), \overline{\df (v / a_{-1})})_g + |(v / a_{-1})_t|^2 + \sigma^2 |v / a_{-1}|^2 \big) |g|^{1/2} \dif S \\
	\lesssim & \ \int_{\Gamma_g} (\calX (v / a_{-1})) |g|^{1/2} \dif S + \nrm[H^1(\Sigma_{+};g')]{v / a_{-1}}^2 + \nrm[L^2(\Sigma_{+};g')]{\partial_{\nu,g} (v / a_{-1})}^2.
	\end{align*}
	If we take into consideration the term $e^{\sigma \varphi}$, we can continue
	\begin{align}
	& \int_{\Gamma_T} e^{2\sigma \varphi} \big( (\df (v / a_{-1}), \overline{\df (v / a_{-1})})_g + |(v / a_{-1})_t|^2 + \sigma^2 |v / a_{-1}|^2 \big) |g|^{1/2} \dif S \nonumber\\
	\lesssim & \ e^{-2\sigma\delta} \int_{\Gamma_g} e^{2\sigma \varphi} (\calX (v / a_{-1})) |g|^{1/2} \dif S + e^{C\sigma} (\nrm[H^1(\Sigma_{+};g')]{v / a_{-1}}^2 + \nrm[L^2(\Sigma_{+};g')]{\partial_{\nu,g} (v / a_{-1})}^2), \label{eq:pm5-ACR2019}
	\end{align}
	where the $\delta := \sup\limits_{(x,t) \in \Gamma_T} \varphi(x,t) - \inf\limits_{(x,t) \in \Gamma_g} \varphi(x,t) < 0$ and the constant $C = 2\sup\limits_{x \in \Gamma_T} \varphi(x)$.

	Lemma \ref{lem:energyBottom-ACR2019} gives
	\begin{align}
	& \int_{\Gamma_g} e^{2\sigma \varphi} (\calX (v/a_{-1})) |g|^{1/2} \dif S - C \sigma \nrm[L^2(Q_+;g)]{e^{\sigma \varphi} \nabla_{x,t} v}^2 - C \sigma^3 \nrm[L^2(Q_+;g)]{e^{\sigma \varphi} v}^2\nonumber\\
	\lesssim & \ C (\nrm[L^2(Q_+;g)]{e^{\sigma \varphi} (\square_g + q) v}^2 + \nrm[L^2(\Sigma_{+})]{\partial_{\nu} v}^2 + \nrm[L^2(\Sigma_{+})]{e^{\sigma \varphi} v_t}^2 + \sigma^2 \nrm[L^2(\Sigma_{+})]{e^{\sigma \varphi} v}^2) \label{eq:pm2-ACR2019}
	\end{align}
	for some constant $C$.

By Lemma \ref{lem:CarlEst-ACR2019} and \eqref{eq:FQua-ACR2019}, for sufficiently large $\sigma$ we have
	\begin{align} \label{eq:pm1-ACR2019}
	& \sigma \nrm[L^2(Q_+)]{e^{\sigma \varphi} \nabla_{x,t} v}^2 + \sigma^3 \nrm[L^2(Q_+)]{e^{\sigma \varphi} v}^2 + \sigma \int_{\partial Q_+} e^{2\sigma \varphi} F(\sigma (\frac v {a_{-1}}),X^m (\frac v {a_{-1}})) \dif S \nonumber\\
	\lesssim & \ \sigma \nrm[L^2(Q_+)]{e^{\sigma \varphi} \nabla_{x,t} (\frac v {a_{-1}})}^2 + \sigma^3 \nrm[L^2(Q_+)]{e^{\sigma \varphi} (\frac v {a_{-1}})}^2 + \sigma \int_{\partial Q_+} e^{2\sigma \varphi} F(\sigma (\frac v {a_{-1}}),X^m (\frac v {a_{-1}})) \dif S \nonumber\\
	\lesssim & \ \nrm[L^2(Q_+)]{e^{\sigma \varphi} \mathcal L(\frac v {a_{-1}})}^2
	= \nrm[L^2(Q_+)]{e^{\sigma \varphi} (\square_g + q)v}^2,
	\end{align}
	where $\mathcal L = (\square_g + q)a_{-1}$ is as the operator \eqref{thm:Operf-ACR2019} where we set $f$ to be $a_{-1}$.

	Summing up \eqref{eq:pm1-ACR2019} and \eqref{eq:pm2-ACR2019} and dividing the boundary integral on $\partial Q_+$ into integrals on $\Gamma_g$ and on $\Sigma_+ \cup \Gamma_T$, and combining with \eqref{eq:pm6-ACR2019}, we obtain
	\begin{align}
		& \int_{\Gamma_g} e^{2\sigma \varphi} (\calX (v/a_{-1})) |g|^{1/2} \dif S + \sigma \int_{\Gamma_g} e^{2\sigma \varphi} F(\sigma (v / a_{-1}),X^m (v / a_{-1})) \dif S \nonumber\\
		\lesssim & \ \nrm[L^2(Q_+;g)]{e^{\sigma \varphi} (\square_g + q) v}^2 + \nrm[L^2(\Sigma_{+})]{\partial_{\nu} v}^2 + \nrm[L^2(\Sigma_{+})]{e^{\sigma \varphi} v_t}^2 + \sigma^2 \nrm[L^2(\Sigma_{+})]{e^{\sigma \varphi} v}^2 \nonumber\\
		& \ + \nrm[L^2(\Gamma_T)]{e^{\sigma \varphi} v_t}^2 + \sigma^2 \nrm[L^2(\Gamma_T)]{e^{\sigma \varphi} v}^2 \nonumber\\
		\lesssim & \ h(\sigma) \sum_m \nrm[L^2(\Gamma_g)]{e^{\sigma \varphi} X^m (v / a_{-1})}^2 + \nrm[L^2(\Sigma_{+})]{\partial_{\nu} v}^2 + \nrm[L^2(\Sigma_{+})]{e^{\sigma \varphi} v_t}^2 + \sigma^2 \nrm[L^2(\Sigma_{+})]{e^{\sigma \varphi} v}^2 \nonumber\\
		& \ + \nrm[L^2(\Gamma_T)]{e^{\sigma \varphi} v_t}^2 + \sigma^2 \nrm[L^2(\Gamma_T)]{e^{\sigma \varphi} v}^2 \nonumber\\
		\lesssim & \ h(\sigma) \nrm[L^2(\Gamma_g)]{e^{\sigma \varphi} \calX (v / a_{-1})}^2 + \nrm[L^2(\Sigma_{+})]{\partial_{\nu} v}^2 + \nrm[L^2(\Sigma_{+})]{e^{\sigma \varphi} v_t}^2 + \sigma^2 \nrm[L^2(\Sigma_{+})]{e^{\sigma \varphi} v}^2 \nonumber\\
		& \ + \nrm[L^2(\Gamma_T)]{e^{\sigma \varphi} v_t}^2 + \sigma^2 \nrm[L^2(\Gamma_T)]{e^{\sigma \varphi} v}^2. \label{eq:pm3-ACR2019}
	\end{align}
	According to Lemma \ref{lem:weight-ACR2019}, as $\sigma$ goes to infinity, $h(\sigma)$ goes to zero, so the term containing $h(\sigma)$ on the right-hand-side of \eqref{eq:pm3-ACR2019} can be absorbed by the corresponding term on the left-hand-side, and then gives us
	\begin{align}
	& \int_{\Gamma_g} e^{2\sigma \varphi} (\calX (v / a_{-1})) |g|^{1/2} \dif S + \sigma \int_{\Gamma_g} e^{2\sigma \varphi} F(\sigma (v / a_{-1}),X^m (v / a_{-1})) \dif S \nonumber\\
	\lesssim & \ \nrm[L^2(\Sigma_{+})]{\partial_{\nu} v}^2 + \nrm[L^2(\Sigma_{+})]{e^{\sigma \varphi} v_t}^2 + \sigma^2 \nrm[L^2(\Sigma_{+})]{e^{\sigma \varphi} v}^2 + \nrm[L^2(\Gamma_T)]{e^{\sigma \varphi} v_t}^2 + \sigma^2 \nrm[L^2(\Gamma_T)]{e^{\sigma \varphi} v}^2. \label{eq:pm7-ACR2019}
	\end{align}
	Combining \eqref{eq:pm5-ACR2019} and \eqref{eq:pm7-ACR2019}, we obtain
	\begin{align*}
	& \int_{\Gamma_g} e^{2\sigma \varphi} (\calX (v / a_{-1})) |g|^{1/2} \dif S + \sigma \int_{\Gamma_g} e^{2\sigma \varphi} F(\sigma (v / a_{-1}),X^m (v / a_{-1})) \dif S \\
	\lesssim & \ \nrm[L^2(\Sigma_{+})]{\partial_{\nu} v}^2 + \nrm[L^2(\Sigma_{+})]{e^{\sigma \varphi} v_t}^2 + \sigma^2 \nrm[L^2(\Sigma_{+})]{e^{\sigma \varphi} v}^2 \\
	& \ + e^{-2\sigma\delta} \int_{\Gamma_g} e^{2\sigma \varphi} (\calX v) |g|^{1/2} \dif S + e^{C\sigma} (\nrm[H^1(\Sigma_{+};g')]{v}^2 + \nrm[L^2(\Sigma_{+};g')]{\partial_{\nu,g} v}^2).
	\end{align*}
	Again, the term containing $e^{-2\sigma\delta}$ on the right-hand-side can be absorbed to the left-hand-side when $\sigma$ is large enough, so
	\begin{align}
	& \int_{\Gamma_g} e^{2\sigma \varphi} (\calX (v_- / a_{-1,-})) |g|^{1/2} \dif S + \sigma \int_{\Gamma_g} e^{2\sigma \varphi} F(\sigma (v / a_{-1}),X^m (v / a_{-1})) \dif S \nonumber\\
	\lesssim & \ \nrm[L^2(\Sigma_{+})]{\partial_{\nu} v}^2 + \nrm[L^2(\Sigma_{+})]{e^{\sigma \varphi} v_t}^2 + \sigma^2 \nrm[L^2(\Sigma_{+})]{e^{\sigma \varphi} v}^2 \nonumber\\
	& \ + e^{C\sigma} (\nrm[H^1(\Sigma_{+};g')]{v}^2 + \nrm[L^2(\Sigma_{+};g')]{\partial_{\nu,g} v}^2) \nonumber\\
	\lesssim & \ \sigma^2 e^{C\sigma} (\nrm[H^1(\Sigma_{+};g')]{v}^2 + \nrm[L^2(\Sigma_{+};g')]{\partial_{\nu,g} v}^2), \label{eq:pm8-ACR2019}
	\end{align}
	where the constant $C = 2\sup \{ \varphi(x) \,;\, x \in \Gamma_T \cup \Sigma_+ \}$.

	Similarly, for $v_-$ we have
	\begin{align}
	& \int_{\Gamma_g} e^{2\sigma \varphi} (\calX (v_- / a_{-1,-})) |g|^{1/2} \dif S - \sigma \int_{\Gamma_g} e^{2\sigma \varphi} F(\sigma (v_- / a_{-1,-}),X^m (v_- / a_{-1,-})) \dif S \nonumber\\
	\lesssim & \ \sigma^2 e^{C\sigma} (\nrm[H^1(\Sigma_{-};g')]{v_-}^2 + \nrm[L^2(\Sigma_{-};g')]{\partial_{\nu,g} v_-}^2). \label{eq:pm9-ACR2019}
	\end{align}
	Note that the quadratic form $F$ satisfies
	\begin{align*}
	& \ |F(a_1,a_2,\cdots) - F(b_1,b_2,\cdots)| \\
	\lesssim & \ |\sum_{i,j} (a_i - b_i + b_i) (a_j - b_j + b_j) - \sum_{i,j} b_i b_j| \\
	= & \ |\sum_{i,j} (a_i - b_i) (a_j - b_j) + 2\sum_{i,j} (a_i - b_i) b_j| \\
	\leq & \sum_{i} (a_i - b_i)^2 + 2 \big[ (1+M)\sum_{i} (a_i - b_i)^2 \big]^{1/2} \cdot \big[ \sum_{j} b_j^2/(1+M) \big]^{1/2} \\
	\leq & \ (2+M)\sum_{i} (a_i - b_i)^2 + \sum_{j} b_j^2/(1+M),
	\end{align*}
	for any positive number $M$.
	Summing up \eqref{eq:pm8-ACR2019} and \eqref{eq:pm9-ACR2019}, we arrive at
	\begin{align*}
	& \sum_\pm \int_{\Gamma_g} e^{2\sigma \varphi} (\calX (v_\pm / a_{-1,\pm})) |g|^{1/2} \dif S \nonumber\\
	\lesssim & \ \sigma \int_{\Gamma_g} e^{2\sigma\varphi} [(2+M)\calX ((v_+ / a_{-1,+}) - (v_- / a_{-1,-})) + \frac {\calX (v_- / a_{-1,-})} {1+M}] \dif S \\
	& + \sigma^2 e^{C\sigma} \sum_\pm (\nrm[H^1(\Sigma_{\pm};g')]{v_\pm}^2 + \nrm[L^2(\Sigma_{\pm};g')]{\partial_{\nu,g} v_\pm}^2).
	\end{align*}
	Let $\sigma$ be large enough so that all the arguments hold and then fix $M = \sigma^2$, thus the term $\sigma \int_{\Gamma_g} e^{2\sigma\varphi} \frac {\calX v_-} {1+M} \dif S$ can be absorbed by the left-hand-side and then it gives us
	\begin{align*}
	& \sum_\pm \int_{\Gamma_g} (\calX (v_\pm / a_{-1,\pm})) |g|^{1/2} \dif S \\
	\lesssim & \ \int_{\Gamma_g} \calX ((v_+ / a_{-1,+}) - (v_- / a_{-1,-})) \dif S
	+ \sum_\pm (\nrm[H^1(\Sigma_{\pm};g')]{v_\pm}^2 + \nrm[L^2(\Sigma_{\pm};g')]{\partial_{\nu,g} v_\pm}^2).
	\end{align*}
	The proof is complete.
\end{proof}

\section{Potential recovery with two measurements} \label{sec:ip2-ACR2019}

In this section, we consider the recovery of the potential $q$ utilizing scattering data generated by two incident plane waves coming from two opposite directions; 
we are devoted to the proof of Theorem \ref{thm:mst-ACR2019}.
Without loss of generality, we assume these two directions are $e_n$ and $-e_n$.
We denote these geometric-related quantities associated with $e_n$ and $-e_n$ as $\omega_i$, $\gamma_i$, $X_i^m$, $X_i$ and $\mathcal X_i$ for $i = 1,2$, and denote the two solutions coming from $e_n$ and $-e_n$ as
\begin{align*}
U(x,t) & = a_{-1}(x) \delta(t - \omega_1(x)) + u(x,t) H(t - \omega_1(x)), \\
V(x,t) & = a_{-1}'(x) \delta(t - \omega_2(x)) + v(x,t) H(t - \omega_2(x)),
\end{align*}
respectively.
Note that $a_{-1}$ and $a_{-1}'$ are not necessarily the same.
According to the assumption \eqref{assumption1-ACR2019}, we can assume that $\omega_1$ and $\omega_2$ satisfy  $\omega_1(x) = - \omega_2(x)$, and
\begin{equation} \label{eq:gamma12t-ACR2019}
\text{if~} \gamma_1(s) = (x,t), \text{~then~} \gamma_2(-s) = (x,-t).
\end{equation}
Moreover,
\begin{equation} \label{eq:X12m-ACR2019}
X^m_i = (\partial_j \omega_i) h^{jm} \partial_t + h^{mk} \partial_k
\quad \text{and} \quad
X_i = \partial_t + (\partial_j \omega_i) g^{jk}  \partial_k,
\quad i= 1,2.
\end{equation}

The following result extends \cite[Theorem 1.2]{RSfixed2019March} to the Riemannian case.

\begin{thm} \label{thm:XuvC2-ACR2019}
Assume $g$ satisfies the assumptions \eqref{assumption1-ACR2019} and \eqref{ass2-ACR2019}.
\sq{Let $T$ be as in Lemma \ref{lem:weight-ACR2019}.}
There exists a positive constant $C$ depending only on $T$, with the following property: if $q_1$, $q_2 \in C^\infty(\Rn;\R)$ are supported in $B$ and $u_1$, $u_2$, $v_1$ and $v_2$ are the corresponding solutions of \eqref{eq:1-ACR2019} and \eqref{eq:U-ACR2019}, then
\begin{equation} \label{eq:XuvC2-ACR2019}
\nrm[L^2(B)]{q_1 - q_2}^2 
\leq C(\nrm[H^1(\Gamma_g \cap \Sigma)]{u_1 - u_2}^2 + \nrm[H^1(\Sigma;g')]{u_1 - u_2}^2 + \nrm[H^1(\Sigma;g')]{v_1 - v_2}^2).
\end{equation}
\end{thm}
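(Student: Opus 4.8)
The plan is to run the Bukhgeim--Klibanov argument: reduce the recovery of $\tilde q := q_1 - q_2$ to an estimate on $\Gamma_g$ for $\tilde u := u_1 - u_2$, glue the two incident directions $\pm e_n$ via the reflection $t \mapsto -t$, feed the outcome into Proposition \ref{prop:vpmAsm-ACR2019}, and control the leftover term by a transport argument along $\Gamma_g$. To begin, since $a_{-1}$ and $a_{-1}'$ depend only on $\omega_1$, $\omega_2$ and $g$, subtracting the identities of Lemma \ref{lem:dp-ACR2019} written for $q_1$ and for $q_2$ gives
\[
(\square_g + q_1)\tilde u = -\tilde q\, u_2 \ \text{ in } Q_+, \qquad X_1(\tilde u/a_{-1})\big|_{\Gamma_g} = -\tfrac12\,\tilde q,
\]
together with the analogous identities for $\tilde v := v_1 - v_2$ with $\omega_2$ in place of $\omega_1$. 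Because $\Gamma_g$ is a graph over $\overline B$ and $X_1$ is a bounded linear combination of $X_1^1,\dots,X_1^n$ by \eqref{eq:XDef-ACR2019}, one has $\nrm[L^2(B)]{\tilde q}^2 \sim \int_{\Gamma_g}|X_1(\tilde u/a_{-1})|^2\,\df S \lesssim \int_{\Gamma_g}\calX(\tilde u/a_{-1})\,|g|^{1/2}\df S$, so it is enough to bound the last integral.

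Next I would set $v_+ := \tilde u \in H^2(Q_+)$ and $v_-(x,t) := -\tilde v(x,-t) \in H^2(Q_-)$. Using $\omega_2 = -\omega_1$ and \eqref{eq:X12m-ACR2019} one checks $X_1[f(x,-t)] = -(X_2 f)(x,-t)$, hence, with $a_{-1,+} = a_{-1}$ and $a_{-1,-} = a_{-1}'$ transplanted to $\Gamma_g$,
\[
X_1(v_-/a_{-1,-})\big|_{\Gamma_g} = \big(X_2(\tilde v/a_{-1}')\big)\big|_{\{t = \omega_2\}} = -\tfrac12\,\tilde q .
\]
Since $\square_g + q_1$ commutes with $t \mapsto -t$, the pair $v_\pm$ satisfies the hypotheses of Proposition \ref{prop:vpmAsm-ACR2019} with $q_{1,\pm} = q_1$ and $u_{2,\pm}$ equal to bounded multiples of $u_2$ and of $v_2(\cdot,-\cdot)$, respectively. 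Writing $\phi := v_+/a_{-1,+} - v_-/a_{-1,-}$, the proposition then yields
\[
\int_{\Gamma_g}\calX(\tilde u/a_{-1})\,|g|^{1/2}\df S \lesssim \int_{\Gamma_g}\calX(\phi)\,\df S + \sum_\pm\Big(\nrm[H^1(\Sigma_\pm;g')]{v_\pm}^2 + \nrm[L^2(\Sigma_\pm;g')]{\partial_{\nu,g}v_\pm}^2\Big),
\]
so the proof reduces to estimating the three terms on the right by the measured data.

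The lateral terms are routine: on $\Sigma_\pm \subset \partial B \times \R$ the metric $g$ is Euclidean, so $\partial_{\nu,g} = \partial_\nu$, and $\nrm[H^1(\Sigma_+;g')]{v_+} \le \nrm[H^1(\Sigma;g')]{u_1 - u_2}$, while $\nrm[H^1(\Sigma_-;g')]{v_-} \lesssim \nrm[H^1(\Sigma;g')]{v_1 - v_2}$ because $t \mapsto -t$ is an isometry of $\Sigma$; the normal derivatives $\nrm[L^2(\Sigma_\pm)]{\partial_\nu v_\pm}$ are bounded by the same $H^1(\Sigma;g')$-norms via the boundary estimate for the exterior problem, since $\square_g\tilde u = \square_g\tilde v = 0$ in $\{|x| > 1\}$ (the potentials being supported in $\overline B$), cf.\ \cite{RSfixed2019}. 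The crucial term is $\int_{\Gamma_g}\calX(\phi)\,\df S$. Subtracting the two transport identities gives $X_1\phi = -\tfrac12\tilde q - (-\tfrac12\tilde q) = 0$ on $\Gamma_g$, so $\phi$ is constant along the integral curves of $X_1$ in $\Gamma_g$, which by \eqref{assumption1a-ACR2019} project onto the $g$-geodesics foliating $\overline B$, each crossing $\partial B$ at an entry point and an exit point. By \eqref{eq:uExp-ACR2019} the function $\tilde u/a_{-1}$ vanishes at the $X_1$-entry point of every such curve, and by \eqref{eq:gamma12t-ACR2019} that $X_1$-exit point is precisely the $X_2$-entry point of the corresponding curve in $\{t = \omega_2\}$, where $\tilde v/a_{-1}'$ vanishes as well; consequently $\phi$ restricted to the exit boundary $\subset \Gamma_g \cap \Sigma$ equals $(\tilde u/a_{-1})|_{\Gamma_g \cap \Sigma}$. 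Solving $X_1\phi = 0$ backwards from the exit boundary — which is legitimate with bounded Jacobians because of the absence of caustics — gives $\nrm[H^1(\Gamma_g;g')]{\phi} \lesssim \nrm[H^1(\Gamma_g \cap \Sigma)]{u_1 - u_2}$, and since $\sigma$ is fixed and $\sum_m |X_1^m\phi|^2 \lesssim |\df\phi|^2_{g'}$ by Lemma \ref{lem:Claim2Equ-ACR2019}, this gives $\int_{\Gamma_g}\calX(\phi)\,\df S \lesssim \nrm[H^1(\Gamma_g \cap \Sigma)]{u_1 - u_2}^2$. Assembling the three displays yields \eqref{eq:XuvC2-ACR2019}.

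The hard part is exactly the treatment of $\int_{\Gamma_g}\calX(\phi)\,\df S$: one must bound it by data supported on the corner $\Gamma_g \cap \Sigma$ alone, not by the left-hand side of the previous display (which would be circular). This rests on the exact cancellation $X_1\phi = 0$ — which is what forces the minus sign in the definition of $v_-$ — on the vanishing of $\tilde u$, respectively $\tilde v$, at the incoming endpoints of the $X_1$-, respectively $X_2$-curves, and on the matching of the $X_1$-outgoing boundary with the $X_2$-incoming boundary provided by \eqref{eq:gamma12t-ACR2019}; all three of these ingredients use the no-caustics content of \eqref{assumption1-ACR2019} in an essential way.
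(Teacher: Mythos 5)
Your proposal is correct and follows essentially the same route as the paper: the same reflection $\tilde v(x,t)=-(v_1-v_2)(x,-t)$, the same application of Proposition \ref{prop:vpmAsm-ACR2019}, the same lower bound $\calX_1(\tilde u/a_{-1})\gtrsim|\tilde q|^2$ via the transport identity, and the same key observation that the difference $\tilde u/a_{-1}-\tilde v/a_{-1}'$ is constant along the $X_1$-curves in $\Gamma_g$ and equals the trace of $\tilde u/a_{-1}$ on $\Gamma_g\cap\Sigma$ (the paper's \eqref{eq:uvu-ACR2019}, realized there through the flow-out map $F$ and the tangential fields $Y^m$). Your "bounded Jacobians from no caustics" step is exactly the content the paper makes explicit by showing the fields $Y^m$ are tangential to $\Gamma_g\cap\Sigma$ in \eqref{eq:XmCla-ACR2019}, so no genuine gap.
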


\begin{proof}[Proof of Theorem \ref{thm:mst-ACR2019}]
Theorem \ref{thm:XuvC2-ACR2019} provides a stability estimate.
The condition
\[
u_{q_1}^{\pm}|_{\partial B \times [-1,T]} = u_{q_2}^{\pm}|_{\partial B \times [-1,T]},
\]
implies $u_1 - u_2 = 0$ and $v_1 - v_2 = 0$ in \eqref{eq:XuvC2-ACR2019}, and hence $q_1 - q_2 = \tilde q = 0$.
\end{proof}

\begin{proof}[Proof of Theorem \ref{thm:XuvC2-ACR2019}]
Assume there are two potentials $q_1$ and $q_2$,
and we denote as $u_1$ and $v_1$ (resp.~$u_2$ and $v_2$) the solutions of \eqref{eq:1-ACR2019} and \eqref{eq:U-ACR2019} corresponding to $q_1$ (resp.~$q_2$).
Denote $\tilde q(x) = (q_1 - q_2)(x)$, $\tilde u(x,t) = (u_1 - u_2)(x,t)$ and $\tilde v(x,t) = -(v_1 - v_2)(x,-t)$, then it can be checked that
\begin{equation} \label{eq:gZv-ACR2019}
\square_g \tilde v(x,t) = -\big( \square_g (v_1 - v_2) \big) (x,-t),
\quad
X_1 \tilde v(y,z,z) = - \big( X_2 (v_1 - v_2) \big) (y,z,-z).
\end{equation}
where $x = (y,z)$ with $y \in \R^{n-1}$ and $z \in \R$.
Therefore,
\begin{equation} \label{eq:tbuv-ACR2019}
\left\{ \begin{aligned}
(\square_g + q_1) \tilde u(x,t) & = - \tilde q(x)\, u_2(x,t), && t > \omega_1(x), \\
(\square_g + q_1) \tilde v(x,t) & = \tilde q(x)\, v_2(x,-t), && t < -\omega_2(x) = \omega_1(x).
\end{aligned}\right.
\end{equation}
According to \eqref{eq:gamma12t-ACR2019} and the definition of $\tilde v$, we can conclude
\[
\tilde v(\gamma_1(s)) = -(v_1 - v_2)(\gamma_2(-s)).
\]
Combining this with \eqref{eq:uExp-ACR2019} and \eqref{eq:Xgasu-ACR2019}, we arrive at
\begin{equation} \label{eq:uvX1s-ACR2019}
\left\{ \begin{aligned}
\tilde u(\gamma_1(s)) 
& = -\frac 1 2 a_{-1}(\gamma_1(s)) \int_{-\infty}^s \tilde q(\gamma_1(\tau)) \dif \tau,
&
\text{so} \ \ 
X_1 |_{(x,\omega_1(x))} ( \tilde u / a_{-1}) 
& = - \tilde q(x) / 2, \\
\tilde v(\gamma_1(s)) 
& = \frac 1 2 a_{-1}' (\gamma_1(s)) \int_s^{+\infty} \tilde q(\gamma_1(\tau)) \dif \tau,
&
\text{so} \ \ 
X_1 |_{(x,\omega_1(x))} ( \tilde v / a_{-1}') 
& = - \tilde q(x) / 2.
\end{aligned}\right.
\end{equation}

By applying Proposition \ref{prop:vpmAsm-ACR2019} to \eqref{eq:tbuv-ACR2019} and \eqref{eq:uvX1s-ACR2019}, we obtain
\begin{align}
& \int_{\Gamma_g} [\calX_1 (\tilde u / a_{-1}) + \calX_1 (\tilde v / a_{-1}')] \, |g|^{1/2} \dif S \nonumber \\
\lesssim & \int_{\Gamma_g} \calX_1 (\tilde u / a_{-1} - \tilde v / a_{-1}') \dif S + \nrm[H^1(\Sigma_{+};g')]{\tilde u / a_{-1}}^2 + \nrm[H^1(\Sigma_{+};g')]{\tilde v / a_{-1}'}^2 \nonumber \\
& + \nrm[L^2(\Sigma_{+};g')]{\partial_{\nu,g} (\tilde u / a_{-1})}^2 + \nrm[L^2(\Sigma_{+};g')]{\partial_{\nu,g} (\tilde v / a_{-1}')}^2. \label{eq:XuvC-ACR2019}
\end{align}

Now we analyze these terms involving $\mathcal X_1$ in  \eqref{eq:XuvC-ACR2019}.
First, we analyze the left-hand-side of \eqref{eq:XuvC-ACR2019}.
For any function $\varphi \in C^1(\R_x^n \times \R_t)$, by \eqref{eq:Claim1Equ-ACR2019}, \eqref{eq:calX-ACR2019} and \eqref{eq:X12m-ACR2019},
we have
\begin{equation} \label{eq:X1g0u0-ACR2019}
\sqrt{|\nabla \omega_1|^2 + 1} (\calX_1 \varphi)
= \sigma^2 |\varphi|^2 + |X_1 \varphi|^2 + |\df \varphi - \df \omega_1 \agl[\df \omega_1, \df \varphi]|_g^2
\geq |X_1 \varphi|^2.
\end{equation}
Replacing $\varphi$ with $\tilde u / a_{-1}$ and with $\tilde v / a_{-1}'$ respectively in \eqref{eq:X1g0u0-ACR2019}, we obtain
\begin{align}
\sqrt{|\nabla \omega_1|^2 + 1} \calX_1 |_{(x,\omega_1(x))}(\tilde u / a_{-1})
& \geq \big| X_1 |_{(x,\omega_1(x))} (\tilde u / a_{-1}) \big|^2
= |\tilde q(x)|^2 / 4, \label{eq:X1ugtrq-ACR2019} \\
\sqrt{|\nabla \omega_1|^2 + 1} \calX_1 |_{(x,\omega_1(x))}(\tilde v / a_{-1}')
& \geq \big| X_1 |_{(x,\omega_1(x))} (\tilde v / a_{-1}') \big|^2
= |\tilde q(x)|^2 / 4. \label{eq:X1vgtrq-ACR2019}
\end{align}

Then we investigate the term $\calX_1 (\tilde u / a_{-1} - \tilde v / a_{-1}')$ on the right-hand-side of  \eqref{eq:XuvC-ACR2019}.
Define a mapping $F \colon \Gamma_g \to \Gamma_g \cap \Sigma$ in the following way.
For any $(x,\omega_1(x)) \in \Gamma_g$, there exists a unique \sq{$s > 0$} such that $\gamma_{(x,\omega_1(x))}(s) \in \Sigma$, and we define $F(x,\omega_1(x)) := \gamma_{(x,\omega_1(x))}(s)$.
It means that $F$ sends $(x,\omega_1(x))$ to the intersection between $\Sigma$ and the integral curve $\gamma$ of $X_1$ passing through the point $(x,\omega_1(x))$.
We denote as $F^a$ the $a$-th component ($a = 1,\cdots,n+1$) of $F$.
It can be checked that
\begin{subequations} \label{eq:Fc2-ACR2019}
\begin{numcases}{}
\sum_{a = 1}^n F^a(x,\omega_1(x))^2 = 1, \label{eq:Fc2.1-ACR2019} \\
F^n(x,\omega_1(x)) = F^{n+1}(x,\omega_1(x)). \label{eq:Fc2.2-ACR2019}
\end{numcases}
\end{subequations}

We show that $X_1 F = 0$. 
Note that $\{ \gamma_{(x,\omega_1(x))}(t) \,;\, t \in \R \} \subset \Gamma_g$.
According to the definition of $F$, for any fixed $(x,\omega_1(x))$, there exists an integral curve $\bar \gamma$ of $X_1$ and a number $s$ such that $\bar \gamma(0) = (x,\omega_1(x))$ and $\bar \gamma(s) = F(x,\omega_1(x))$, and moreover, $\bar \gamma(s) = F(\bar \gamma(t))$ for any $t$. 
Hence,
\[
X_1 F(x,\omega_1(x))
= X_1 |_{(x,\omega_1(x))} F
= \frac{\df}{\df t} \big( F(\bar \gamma(t)) \big) |_{t = 0}
= \frac{\df}{\df t} \big( \bar \gamma(s) \big) |_{t = 0}
= 0.
\]
From $X_1 F = 0$ and \eqref{eq:XDef-ACR2019}, we can obtain
\begin{equation} \label{eq:Fc1-ACR2019}
\partial_t F^a(x,\omega_1(x)) + \partial_j \omega_1 g^{jk} \partial_k F^a(x,\omega_1(x)) = 0.
\end{equation}

Keeping in mind that the potential is assumed to be supported in $B(0,1)$, 
then formula \eqref{eq:uvX1s-ACR2019} gives
\begin{equation} \label{eq:uvu-ACR2019}
\frac {\tilde u}  {a_{-1}} (x, \omega_1(x)) - \frac {\tilde v}  {a_{-1}'} (x, \omega_1(x))
= -\frac 1 2 \int_{-\infty}^{+\infty} \tilde q(\gamma_1(\tau)) \dif \tau 
= \frac {\tilde u} {a_{-1}} (F(x, \omega_1(x))). 
\end{equation}
Recall that $X_1^m = (\partial_j \omega_1) h^{jm} \partial_t + h^{mk} \partial_k$, so for any $\varphi \in C^1(\R_x^n \times \R_t)$, \sq{it} can checked that
\begin{equation} \label{eq:XmInOut-ACR2019}
X_1^m |_{(x,\omega_1(x))} \varphi 
= X_1^m \varphi(x,\omega_1(x))
= h^{mk} \partial_k \big( \varphi(x,\omega_1(x)) \big).
\end{equation}
Therefore, by combining \eqref{eq:Fc1-ACR2019}, \eqref{eq:uvu-ACR2019} and \eqref{eq:XmInOut-ACR2019}, one can compute
\begin{align}
& \ \quad X_1^m |_{(x,\omega_1(x))} ( \tilde u / a_{-1} - \tilde v / a_{-1}' )
= h^{mk} \partial_k \Big( \frac {\tilde u}  {a_{-1}} \circ F (x,\omega_1(x)) \Big) \nonumber \\
& = h^{mk} \sum_{a = 1}^{n+1} \partial_a \big( \tilde u / a_{-1} \big)(F(x,\omega_1(x))) \cdot \big( \partial_k F^a + \partial_t F^a (\partial_k \omega_1) \big) \nonumber \\
& = \ \sum_{a = 1}^{n+1} h^{mk} \big( \partial_k F^a(x,\omega_1(x)) - (\partial_j \omega_1) g^{j\ell} (\partial_\ell F^a(x,\omega_1(x))) (\partial_k \omega_1) \big) \partial_a |_{F(x,\omega_1(x))} ( \tilde u / a_{-1} ) \nonumber \\
& =: Y^m |_{F(x,\omega_1(x))} ( \tilde u / a_{-1} ), \label{eq:Xmua-ACR2019}
\end{align}
where $Y^m |_{F(x,\omega_1(x))}$ is the value at $F(x,\omega_1(x))$ of the vector field \sq{$Y^m$}.

Combining \eqref{eq:calX-ACR2019}, \eqref{eq:uvu-ACR2019} and  \eqref{eq:Xmua-ACR2019}, we arrive at
\begin{align}
& \ \sqrt{|\nabla \omega_1|^2 + 1} \calX_1 |_{(x,\omega_1(x))} ( \tilde u / a_{-1} - \tilde v / a_{-1}' ) \nonumber \\
= & \ \sigma^2 \big| \frac {\tilde u} {a_{-1}} (F(x, \omega_1(x))) \big|^2 + \sum_{m = 1}^{n} \big| \ Y^m |_{F(x,\omega_1(x))} ( \tilde u / a_{-1} ) \big|^2. \label{eq:X1g0.2-ACR2019}
\end{align}

\sq{Now we make the following claim.}
\begin{equation} \label{eq:XmCla-ACR2019}
\textbf{Claim:~} Y^m |_{F(x,\omega_1(x))}
\text{~is tangential to~} \Gamma_g \cap \Sigma,
\end{equation}
and this can be justified as follows.
Denoting $H_1(x,t) = t - \omega_1(x)$ and $H_2(y,z,t) = |x|^2/2 - 1/2$, it is clear that $\Gamma_g \cap \Sigma$ is the intersection of $\{(x,t) \,;\, H_1(x,t) = 0\}$ and $\{(x,t) \,;\, H_2(x,t) = 0\}$.
First, formula \eqref{eq:Fc2.1-ACR2019} implies that the $n$-dimensional vector $\big( F^a(x,\omega_1(x)) \big)_{a = 1}^n$ belongs to $\partial B$, so by \eqref{assumption1-ACR2019} we have
\[
\partial_a |_{F^a(x,\omega_1(x))} H_1 = \partial_a |_{F^a(x,\omega_1(x))} (t - z) = 
\left\{ \begin{aligned}
& 0, && a < n, \\
& -1, && a = n, \\
& 1, && a = n + 1.
\end{aligned} \right.
\]
Therefore,
\begin{align}
Y^m |_{F(x,\omega_1(x))} H_1
& = \big[ h^{mk} \big( \partial_k F^{n+1}(x,\omega_1(x)) - (\partial_j \omega_1) g^{j\ell} (\partial_\ell F^{n+1}(x,\omega_1(x))) (\partial_k \omega_1) \big) \big] \nonumber \\
& \quad - \big[ h^{mk} \big( \partial_k F^n(x,\omega_1(x)) - (\partial_j \omega_1) g^{j\ell} (\partial_\ell F^n(x,\omega_1(x))) (\partial_k \omega_1) \big) \big] \nonumber \\
& = 0. \label{eq:H10-ACR2019}
\end{align}
Note that in \eqref{eq:H10-ACR2019} we used \eqref{eq:Fc2.2-ACR2019}.
Second, we have
\[
\partial_a |_{F^a(x,\omega_1(x))} H_2 = \partial_a |_{F^a(x,\omega_1(x))} (|x|^2/2 - 1/2) = 
\left\{ \begin{aligned}
& F^a(x,\omega_1(x)), && a \leq n, \\
& 0, && a = n + 1.
\end{aligned} \right.
\]
Hence $Y^m |_{F(x,\omega_1(x))} H_2$ can be computed as follows,
\begin{align}
Y^m |_{F(x,\omega_1(x))} H_2
& = \sum_{a = 1}^n h^{mk} \big[ \partial_k F^a(x,\omega_1(x)) - (\partial_j \omega_1) g^{j\ell} (\partial_\ell F^a(x,\omega_1(x))) (\partial_k \omega_1) \big] F^a(x,\omega(x)) \nonumber \\
& = \frac 1 2 h^{mk} \big[ \partial_k \big( \sum_{a = 1}^n F^a(x,\omega_1(x))^2 \big) - (\partial_j \omega_1) g^{j\ell} \partial_\ell \big( \sum_{a = 1}^n F^a(x,\omega_1(x))^2 \big) (\partial_k \omega_1) \big] \nonumber \\
& = \frac 1 2 h^{mk} \big[ \partial_k (1) - (\partial_j \omega_1) g^{j\ell} \partial_\ell (1) (\partial_k \omega_1) \big] 
= 0. \label{eq:H20-ACR2019}
\end{align}
Note that in \eqref{eq:H20-ACR2019} we used \eqref{eq:Fc2.1-ACR2019}.
Formulae \eqref{eq:H10-ACR2019} and \eqref{eq:H20-ACR2019} imply that $Y_m |_{F(w,\omega_1(x))}$ is tangential to both $\{ H_1 = 0\}$ and $\{ H_2 = 0\}$, and hence tangential to $\{ H_1 = 0\} \cap \{ H_2 = 0\}$ which is $\Gamma_g \cap \Sigma$.

Combining the claim \eqref{eq:XmCla-ACR2019} with \eqref{eq:X1g0.2-ACR2019}, 
we can obtain
\begin{align}
\int_{\Gamma_g}\calX_1 ( \tilde u / a_{-1} - \tilde v / a_{-1}' ) \dif S
& \lesssim \int_{\Gamma_g} \sigma^2 \big| \frac {\tilde u} {a_{-1}} (F(x, \omega_1(x))) \big|^2 + \sum_{m = 1}^{n} \big| \ Y^m |_{F(x,\omega_1(x))} ( \tilde u / a_{-1} ) \big|^2 \dif S \nonumber \\
& \lesssim \sigma^2 \nrm[L^2(\Gamma_g \cap \Sigma)]{\tilde u / a_{-1}}^2 + \nrm[H^1(\Gamma_g \cap \Sigma)]{\tilde u / a_{-1}}^2. \label{eq:X1nromuv-ACR2019}
\end{align}
Now we combine \eqref{eq:XuvC-ACR2019}, \eqref{eq:X1ugtrq-ACR2019}, \eqref{eq:X1vgtrq-ACR2019} and \eqref{eq:X1nromuv-ACR2019}, and we obtain
\begin{align*}
& \ \nrm[L^2(B(0,1))]{\tilde q}^2 \nonumber \\
\lesssim &  \int_{\Gamma_g} [\calX_1 (\tilde u / a_{-1}) + \calX_1 (\tilde v / a_{-1}')] \, |g|^{1/2} \dif S \nonumber \\
\lesssim & \ \sigma^2 \nrm[L^2(\Gamma_g \cap \Sigma)]{\tilde u / a_{-1}}^2 + \nrm[H^1(\Gamma_g \cap \Sigma)]{\tilde u / a_{-1}}^2 + \nrm[H^1(\Sigma_{+};g')]{\tilde u / a_{-1}}^2 + \nrm[H^1(\Sigma_{-};g')]{\tilde v / a_{-1}'}^2 \nonumber \\
& + \nrm[L^2(\Sigma_{+};g')]{\partial_{\nu,g} (\tilde u / a_{-1})}^2 + \nrm[L^2(\Sigma_{-};g')]{\partial_{\nu,g} (\tilde v / a_{-1}')}^2 \nonumber \\
\lesssim & \ \sigma^2 \nrm[L^2(\Gamma_g \cap \Sigma)]{\tilde u}^2 + \nrm[H^1(\Gamma_g \cap \Sigma)]{\tilde u}^2 + \nrm[H^1(\Sigma_{+};g')]{\tilde u}^2 + \nrm[H^1(\Sigma_{-};g')]{\tilde v}^2 \nonumber \\
& + \nrm[L^2(\Sigma_{+};g')]{\partial_{\nu,g} \tilde u}^2 + \nrm[L^2(\Sigma_{-};g')]{\partial_{\nu,g} \tilde v}^2.
\end{align*}
Applying \cite[Lemma 3.3]{RSfixed2019March}, we can conclude the proof.
\end{proof}

\appendix

\section{Equivalence of conditions} \label{sec_equivalence}

We prove the equivalence of three conditions appearing in the introduction. The required arguments are standard (one could also use the language of Lagrangian manifolds as in \cite[Section 6.4]{Hormander1}). We give the details for completeness.

\begin{lem} \label{lemma_omega_equivalence}
Let $g$ be a smooth Riemannian metric in $\R^n$ satisfying \eqref{asm:gce-ACR2019}. Then the conditions \eqref{assumption1-ACR2019}, \eqref{assumption1a-ACR2019}, and \eqref{assumption1b-ACR2019} are equivalent.
\end{lem}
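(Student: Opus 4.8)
The plan is to establish the cycle \eqref{assumption1-ACR2019} $\Rightarrow$ \eqref{assumption1b-ACR2019} $\Rightarrow$ \eqref{assumption1a-ACR2019} $\Rightarrow$ \eqref{assumption1-ACR2019}, exploiting throughout the classical dictionary between solutions of the eikonal equation, foliations by unit-speed geodesics, and semigeodesic (Fermi) coordinates; see \cite[Section 6.4]{Hormander1} for the Lagrangian-manifold viewpoint. By \eqref{asm:gce-ACR2019}, any $\omega$ as in \eqref{assumption1-ACR2019} extends to a smooth function on $\R^n$ with $\omega(x)=x_n$ for $|x|\geq 1$ and $|\df\omega|_g=1$ everywhere, and $g$-geodesics are Euclidean straight lines outside $\ol B$; these two facts are what allow the local constructions below to be carried out globally.

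I would begin with \eqref{assumption1-ACR2019} $\Rightarrow$ \eqref{assumption1b-ACR2019}, which carries the main content. Let $V=\nabla^g\omega$; a short computation using $|\df\omega|_g^2\equiv 1$ and the symmetry of the Hessian of $\omega$ gives $\nabla^g_V V=0$, so the integral curves of $V$ are unit-speed geodesics, and along its flow $\Phi_s$ (complete, since $V=\p_{x_n}$ outside $\ol B$) one has $\omega\circ\Phi_s=\omega+s$. Fix $c_0<\min(-1,\min_{\ol B}\omega)$, so that $\{\omega=c_0\}$ is the hyperplane $\{x_n=c_0\}$, and define $\Lambda(y',y_n):=\Phi_{y_n-c_0}(y',c_0)$. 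Since $\{x_n=c_0\}$ is $g$-orthogonal to $V=\p_{x_n}$ there and $V$ never vanishes, $\Lambda$ is a local diffeomorphism; the identity $\omega\circ\Phi_s=\omega+s$ makes it bijective (given $q$, flow backward by $\omega(q)-c_0$ to reach $\{x_n=c_0\}$), hence a global diffeomorphism $\R^n\to\R^n$. In the coordinates $y$ the curves $\{y'=\mathrm{const}\}$ are unit-speed geodesics, so $g(\p_{y_n},\p_{y_n})=|V|_g^2=1$, and the Gauss-lemma computation $\p_{y_n}g(\p_{y_n},\p_{y'_\alpha})=\tfrac{1}{2}\p_{y'_\alpha}g(\p_{y_n},\p_{y_n})=0$, combined with the orthogonality $g(\p_{y_n},\p_{y'_\alpha})=0$ at $y_n=c_0$ (where the metric is Euclidean and $e_n$ is normal to $\{x_n=c_0\}$), forces $g(\p_{y_n},\p_{y'_\alpha})\equiv 0$; thus $g$ has the block form of \eqref{assumption1b-ACR2019}. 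Finally, wherever $\Lambda(y',y_n)\notin\ol B$ the relevant geodesic arc is a vertical Euclidean segment, so $\p_{y_n}\Lambda=e_n$ and $\p_{y_n}=\p_{x_n}$ outside $B$.

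For \eqref{assumption1b-ACR2019} $\Rightarrow$ \eqref{assumption1a-ACR2019}, the block form forces the coordinate lines $\{y'=\mathrm{const}\}$ to be unit-speed geodesics (the Christoffel symbols $\Gamma^k_{nn}$ vanish), which outside $\ol B$ — where $\p_{y_n}=\p_{x_n}$ and geodesics are straight — are the vertical Euclidean lines; reparametrizing each by arclength measured from $\{x_n=-1\}$ exhibits $\R^n$ as smoothly parametrized by $g$-geodesics issuing from $\{x_n=-1\}$ in direction $e_n$, and each meets $\{x_n=1\}$ (along it $x_n\to\pm\infty$) at a point lying outside $B$, hence with velocity $\p_{y_n}=e_n$. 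For \eqref{assumption1a-ACR2019} $\Rightarrow$ \eqref{assumption1-ACR2019} one runs the first construction in reverse: let $\Lambda$ be the given parametrization, normalized so $y_n$ is arclength with $y_n=-1$ on $\{x_n=-1\}$, and set $\omega:=y_n\circ\Lambda^{-1}\in C^\infty(\R^n)$; the same Gauss-lemma argument (the geodesics emanate orthogonally from the Euclidean hyperplane $\{x_n=-1\}$) gives $g(\p_{y_n},\p_{y_n})=1$ and $g(\p_{y_n},\p_{y'_\alpha})=0$, so $|\df\omega|_g^2=g^{nn}=1$; and since each such geodesic is a straight line outside $\ol B$ meeting $\{x_n=1\}$ in direction $e_n$, it is a vertical line there, whence $\Lambda(y',y_n)=(y',y_n)$ whenever the value lies outside $B$, i.e.\ $\omega=x_n$ outside $B$, and in particular to infinite order on $\p B$. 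Restricting $\omega$ to $\ol B$ yields \eqref{assumption1-ACR2019}.

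The genuinely delicate point is not the local (Gauss-lemma) differential geometry but the \emph{global} structure: that the semigeodesic charts are diffeomorphisms of all of $\R^n$, and that each geodesic of the family is an honest vertical Euclidean line outside $\ol B$, so that $\omega$ really equals $x_n$ there rather than $x_n$ up to a lateral shift. Both facts rely essentially on \eqref{asm:gce-ACR2019}, which pins down the flow and the geodesics at infinity; handling this, together with the bookkeeping of passing between the hyperplane $\{x_n=-1\}$ and the auxiliary far hyperplane $\{x_n=c_0\}$ and between a geodesic's arclength parameter and the coordinate $x_n$ in the flat region, is where the routine but not entirely automatic work lies.
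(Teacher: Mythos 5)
Your implications \eqref{assumption1-ACR2019}$\Rightarrow$\eqref{assumption1b-ACR2019} (gradient flow of $\omega$, semigeodesic chart anchored at the far hyperplane $\{x_n=c_0\}$, Gauss lemma) and \eqref{assumption1b-ACR2019}$\Rightarrow$\eqref{assumption1a-ACR2019} are essentially sound, and they parallel the paper's Hamiltonian/bicharacteristic argument in Levi--Civita language. The gap is in the last step, \eqref{assumption1a-ACR2019}$\Rightarrow$\eqref{assumption1-ACR2019}. You claim that since each geodesic $\gamma_{x'}$ is straight outside $\ol B$ and meets $\{x_n=1\}$ in direction $e_n$, it is ``a vertical line there, whence $\Lambda(y',y_n)=(y',y_n)$ whenever the value lies outside $B$, i.e.\ $\omega=x_n$ outside $B$.'' This is a non sequitur, and the intermediate claim is in fact too strong: verticality of the exit ray only tells you that above its crossing of $\{x_n=1\}$ the geodesic is a vertical ray at \emph{some} horizontal position and with \emph{some} affine relation between the arclength parameter $y_n$ and $x_n$. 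Condition \eqref{assumption1a-ACR2019} does not by itself exclude a horizontal displacement (the geodesic may exit above a different $x'$ than it entered --- this is compatible with all three conditions, so $\Lambda(y',y_n)=(y',y_n)$ need not hold), and, more importantly for you, it does not obviously exclude a ``time delay'': for a point $p=\gamma_{x'}(t_p)$ on the upper hemisphere of $\p B$ you need $t_p=p_n$, i.e.\ the $g$-length of the curved segment through the ball equals the Euclidean height gain. That identity is precisely the content of $\omega=x_n$ on the \emph{upper} part of $\p B$, and nothing in your one-line justification delivers it. (There is also a secondary point you skip: under \eqref{assumption1a-ACR2019} alone, knowing the velocity is $e_n$ at $\{x_n=\pm1\}$ does not immediately give verticality at every exterior point the geodesic visits; one needs the injectivity of the parametrization, or the reading that every crossing of $\{x_n=1\}$ is in direction $e_n$, to rule out sideways excursions.)

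The missing argument is exactly the paper's short \eqref{assumption1b-ACR2019}$\Rightarrow$\eqref{assumption1-ACR2019} step, and it is available to you since you already have the block form: because $g_{nn}=1$ and $g_{\alpha n}=0$ in the $y$-coordinates, $\df y_n$ is the $g$-metric dual of $\p_{y_n}$; wherever the foliation field equals $\p_{x_n}$ and $g$ is Euclidean this gives $\df y_n=\df x_n$, so on the connected exterior of $B$ one gets $y_n=x_n+C$, and $C=0$ by evaluating below $\{x_n=-1\}$. This kills the time delay (the horizontal shift is irrelevant for $\omega=y_n$) and yields $\omega=x_n$ outside $B$, hence to infinite order on $\p B$. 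The paper's cycle \eqref{assumption1-ACR2019}$\Rightarrow$\eqref{assumption1a-ACR2019}$\Rightarrow$\eqref{assumption1b-ACR2019}$\Rightarrow$\eqref{assumption1-ACR2019} is arranged so that this duality-plus-connectedness argument is where the upper boundary matching is proved; as written, your closing implication asserts that matching rather than proving it, so the cycle does not close.
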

\begin{proof}
We first prove that \eqref{assumption1-ACR2019} implies \eqref{assumption1a-ACR2019}. First extend $\omega$ smoothly to $\R^n$ so that $\omega(x) = x_n$ outside $B$. If $\gamma$ is any unit speed $g$-geodesic, we claim that 
\begin{equation} \label{omega_bicharacteristic_conditions}
d\omega(\gamma(t_0)) = \dot{\gamma}(t_0)^{\flat} \text{ for some $t_0$} \implies d \omega(\gamma(t)) = \dot{\gamma}(t)^{\flat} \text{ for all $t \in \R$}.
\end{equation}
To prove this, write $p(x,\xi) = \frac{1}{2}(\abs{\xi}_g^2 - 1)$ where $(x,\xi) \in T^* \R^n = \R^n \times \R^n$, let $X(t)$ solve $\dot{X}(t) = \nabla_{\xi} p(X(t), \nabla \omega(X(t)))$ with $X(t_0) = \gamma(t_0)$ and write $\Xi(t) = \nabla \omega(X(t))$. We compute 
\[
\dot{\Xi}_j(t) = \partial_{jk} \omega(X(t)) \dot{X}_k(t) = \partial_{jk} \omega(X(t)) \p_{\xi_k} p (X(t), \nabla \omega(X(t))).
\]
But \eqref{assumption1-ACR2019} gives that $p(x, \nabla \omega(x)) = 0$, and differentiating this equation yields 
\[
\p_{x_j} p(x, \nabla \omega(x)) + \p_{\xi_k} p(x, \nabla \omega(x)) \p_{jk} \omega(x) = 0.
\]
Combining the last two equations shows that $(X(t), \Xi(t))$ satisfies the Hamilton equations
\[
\dot{X}(t) = \nabla_{\xi} p(X(t), \Xi(t)), \qquad \dot{\Xi}(t) = -\nabla_x p(X(t), \Xi(t)).
\]
Note also that $(X(t_0),\Xi(t_0)) = (\gamma(t_0), \dot{\gamma}(t_0)^{\flat})$, using that $d\omega(\gamma(t_0)) = \dot{\gamma}(t_0)^{\flat}$. Since solutions are unique, $(X(t), \Xi(t))$ must agree with the null bicharacteristic $(\gamma(t), \dot{\gamma}(t)^{\flat})$ for $p$. This shows \eqref{omega_bicharacteristic_conditions}. By integrating, we also see that 
\begin{equation} \label{omega_bicharacteristic_conditions_second}
\omega(\gamma(t_2)) - \omega(\gamma(t_1)) = \int_{t_1}^{t_2} d\omega |_{\gamma(t)} (\dot{\gamma}(t)) \dif t = \int_{t_1}^{t_2} \agl[\df \omega, \df \omega] \dif t = t_2 - t_1.
\end{equation}

For any $x' \in \R^{n-1}$ let $\gamma_{x'}(t)$ be the unit speed $g$-geodesic with $\gamma_{x'}(-1) = (x',-1)$ and $\dot{\gamma}_{x'}(-1) = e_n$. Then $d\omega(\gamma_{x'}(-1)) = \dot{\gamma}_{x'}(-1)^{\flat}$ since $\omega = x_n$ outside $B$, so \eqref{omega_bicharacteristic_conditions} gives that $d \omega(\gamma_{x'}(t)) = \dot{\gamma}_{x'}(t)^{\flat}$ for all $t$. Consider the map 
\[
F: \R^n \to \R^n, \ \ F(x',t) = \gamma_{x'}(t).
\]
This map is smooth, and it is the identity map in $\R^n \setminus \{ (x',t) \,:\, \abs{x'} \leq 1, \,t \geq -1 \}$. It is injective, since if $\gamma_{x'}(t) = \gamma_{\tilde{x}'}(\tilde{t})$ then by \eqref{omega_bicharacteristic_conditions} 
\[
\dot{\gamma}_{x'}(t)^{\flat} = d\omega(\gamma_{x'}(t)) = d\omega(\gamma_{\tilde{x}'}(\tilde{t})) = \dot{\gamma}_{\tilde{x}'}(\tilde{t})^{\flat}.
\]
Thus the two geodesics must be the same, i.e.~$\gamma_{x'}(t+s) = \gamma_{\tilde{x}'}(\tilde{t}+s)$. 
By taking $s \ll -1$ we obtain $x' = \tilde{x}'$, and \eqref{omega_bicharacteristic_conditions_second} then gives $t = \tilde{t}$. 
Finally, we show that $F$ is surjective. 
For any $y$ with $\abs{y'} \leq 1$ and $y_n > -1$, let $\gamma$ be the geodesic with $\gamma(0) = y$ and $\dot{\gamma}(0)^{\flat} = d\omega(y)$.
We denote $A: = \inf_{\{x \in \Rn ; x_n > -1\}} \omega(x)$ for short, and it can be checked that $A$ is finite.
If $\gamma_n(s) > -1$ for all $s \leq 0$, then $\omega(\gamma(s)) \geq A$ for all $s \leq 0$ (using that $\omega$ is smooth in $\ol{B}$ and $\omega = x_n$ outside $B$). 
Now \eqref{omega_bicharacteristic_conditions_second} implies that 
\[
s = \omega(\gamma(s)) - \omega(\gamma(0)) \geq A - \omega(\gamma(0)) \text{ for all $s \leq 0$}. 
\]
This is a contradiction, which proves that $y = \gamma_{x'}(t)$ for some $x'$ and $t$. A similar argument for $y = (x',-1)$ shows that for any $x' \in \R^{n-1}$, the geodesic $\gamma_{x'}(t)$ meets $\{ x_n = 1 \}$.

We have proved that $F$ is smooth and bijective, i.e.\ a diffeomorphism. Hence the map $(x',t) \mapsto \gamma_{x'}(t)$ smoothly parametrizes $\R^n$, and any geodesic starting from $\{ x_n = -1 \}$ in direction $e_n$ meets $\{ x_n = 1 \}$ pointing in direction $e_n$. This concludes the proof that \eqref{assumption1-ACR2019} implies \eqref{assumption1a-ACR2019}.

Now, if \eqref{assumption1a-ACR2019} is satisfied, then the map $F(x',t) = \gamma_{x'}(t)$ above is a diffeomorphism of $\mR^n$. Writing $y' = x'$ and $y_n = t$ gives a global coordinate system $y = (y',y_n)$ in $\R^n$. (These are sometimes called semigeodesic coordinates with respect to $\{ x_n = -1 \}$.) In particular, $y_n(x) = t$ if $x = \gamma_{z'}(t)$ for some (unique) $z'$ and $t$, and then $\partial_{y_n}|_x = \dot{\gamma}_{z'}(t)$. The fact that the metric is Euclidean outside $B$ together with \eqref{assumption1a-ACR2019} ensure that $\partial_{y_n} = \partial_{x_n}$ outside $B$. Clearly we have $\abs{\partial_{y_n}}_g = 1$, so the metric has the form 
\[
g(y) = \left( \begin{array}{cc} g_0(y) & * \\ * & 1 \end{array} \right).
\]
It remains to show that $g(\partial_{y_{\alpha}}, \partial_{y_n}) = 0$ for $1 \leq \alpha \leq n-1$. This is a standard property of semigeodesic coordinates, and it follows from the computation 
\[
\p_{y_n} (g(\partial_{y_{\alpha}}, \partial_{y_n})) = g(\nabla_{\partial_{y_n}} \partial_{y_{\alpha}}, \partial_{y_n}) + g(\partial_{y_{\alpha}}, \nabla_{\partial_{y_n}} \partial_{y_n})
\]
where $\nabla$ is the Levi-Civita connection for $g$. The last term vanishes since $\partial_{y_n}$ is the tangent vector of a geodesic. For the first term on the right we use that $\nabla$ is torsion free, i.e.\ $\nabla_{\partial_{y_n}} \partial_{y_{\alpha}} = \nabla_{\partial_{y_{\alpha}}} \partial_{y_n}$, and this implies 
\[
\p_{y_n} g(\partial_{y_{\alpha}}, \partial_{y_n}) = \frac{1}{2} \p_{y_{\alpha}} ( g(\partial_{y_n}, \partial_{y_n}) ) = 0.
\]
Since $g(\partial_{y_{\alpha}}, \partial_{y_n})|_{(x',-1)} = e(\partial_{x_{\alpha}}, \partial_{x_n}) = 0$ where $e$ is the Euclidean metric, we see that $g(\partial_{y_{\alpha}}, \partial_{y_n}) = 0$ everywhere. This shows \eqref{assumption1b-ACR2019}.

Finally, if \eqref{assumption1b-ACR2019} holds then we may choose $\omega(y) = y_n$. The function $\omega$ is smooth everywhere, and the form of the metric implies that $\abs{d\omega}_g = 1$. We also have $d\omega = dx_n$ outside $B$, hence $\omega = x_n + C$ outside $B$ for some constant $C$. Subtracting the constant gives the function $\omega$ required in \eqref{assumption1-ACR2019}.
\end{proof}


{
\begin{bibdiv}
	\begin{biblist}
		
		\bib{BarceloEtAl}{article}{
			author={Barcel\'{o}, Juan~Antonio},
			author={Castro, Carlos},
			author={Luque, Teresa},
			author={Mero\~{n}o, Cristobal~J.},
			author={Ruiz, Alberto},
			author={Vilela, Mar\'{\i}a de la~Cruz},
			title={Uniqueness for the inverse fixed angle scattering problem},
			date={2020},
			ISSN={0928-0219},
			journal={J. Inverse Ill-Posed Probl.},
			volume={28},
			number={4},
			pages={465\ndash 470},
			url={https://doi.org/10.1515/jiip-2019-0019},
			review={\MR{4129366}},
		}
		
		\bib{BaylissLiMorawetz}{article}{
			author={Bayliss, Alvin},
			author={Li, Yan~Yan},
			author={Morawetz, Cathleen~Synge},
			title={Scattering by a potential using hyperbolic methods},
			date={1989},
			ISSN={0025-5718},
			journal={Math. Comp.},
			volume={52},
			number={186},
			pages={321\ndash 338},
			url={https://doi.org/10.2307/2008470},
			review={\MR{958869}},
		}
		
		\bib{Be04}{article}{
			author={Bellassoued, Mourad},
			title={Uniqueness and stability in determining the speed of propagation
				of second-order hyperbolic equation with variable coefficients},
			date={2004},
			ISSN={0003-6811},
			journal={Appl. Anal.},
			volume={83},
			number={10},
			pages={983\ndash 1014},
			url={https://doi.org/10.1080/0003681042000221678},
		}
		
		\bib{BY17}{book}{
			author={Bellassoued, Mourad},
			author={Yamamoto, Masahiro},
			title={Carleman estimates and applications to inverse problems for
				hyperbolic systems},
			series={Springer Monographs in Mathematics},
			publisher={Springer, Tokyo},
			date={2017},
			ISBN={978-4-431-56598-7; 978-4-431-56600-7},
			url={https://doi.org/10.1007/978-4-431-56600-7},
			review={\MR{3729280}},
		}
		
		\bib{Bu00}{article}{
			author={Bukhgeim, A.~L.},
			title={Recovering a potential from {C}auchy data in the two-dimensional
				case},
			date={2008},
			ISSN={0928-0219},
			journal={J. Inverse Ill-Posed Probl.},
			volume={16},
			number={1},
			pages={19\ndash 33},
			url={https://doi.org/10.1515/jiip.2008.002},
		}
		
		\bib{BukhgeimKlibanov}{article}{
			author={Bukhgeim, Aleksandr~L'vovich},
			author={Klibanov, Mikhail~Viktorovich},
			title={Global uniqueness of a class of multidimensional inverse
				problems},
			date={1981},
			journal={Doklady Akademii Nauk SSSR},
			volume={260},
			number={2},
			pages={269\ndash 272},
		}
		
		\bib{DeiftTrubowitz}{article}{
			author={Deift, P.},
			author={Trubowitz, E.},
			title={Inverse scattering on the line},
			date={1979},
			ISSN={0010-3640},
			journal={Comm. Pure Appl. Math.},
			volume={32},
			number={2},
			pages={121\ndash 251},
			url={https://doi.org/10.1002/cpa.3160320202},
			review={\MR{512420}},
		}
		
		\bib{LCW2009}{article}{
			author={Ferreira, David Dos~Santos},
			author={Kenig, Carlos~E.},
			author={Salo, Mikko},
			author={Uhlmann, Gunther},
			title={Limiting carleman weights and anisotropic inverse problems},
			date={2009},
			journal={Invent. Math.},
			volume={178},
			pages={119–171},
		}
		
		\bib{Hormander1}{book}{
			author={H\"{o}rmander, Lars},
			title={The analysis of linear partial differential operators. {I}},
			series={Classics in Mathematics},
			publisher={Springer-Verlag, Berlin},
			date={2003},
			ISBN={3-540-00662-1},
			url={https://doi.org/10.1007/978-3-642-61497-2},
			note={Distribution theory and Fourier analysis, Reprint of the second
				(1990) edition [Springer, Berlin; MR1065993 (91m:35001a)]},
			review={\MR{1996773}},
		}
		
		\bib{Hormander3}{book}{
			author={H\"{o}rmander, Lars},
			title={The analysis of linear partial differential operators. {III}},
			series={Classics in Mathematics},
			publisher={Springer, Berlin},
			date={2007},
			ISBN={978-3-540-49937-4},
			url={https://doi.org/10.1007/978-3-540-49938-1},
			note={Pseudo-differential operators, Reprint of the 1994 edition},
			review={\MR{2304165}},
		}
		
		\bib{IY01}{article}{
			author={Imanuvilov, Oleg~Yu.},
			author={Yamamoto, Masahiro},
			title={Global uniqueness and stability in determining coefficients of
				wave equations},
			date={2001},
			ISSN={0360-5302},
			journal={Comm. Partial Differential Equations},
			volume={26},
			number={7-8},
			pages={1409\ndash 1425},
			url={https://doi.org/10.1081/PDE-100106139},
			review={\MR{1855284}},
		}
		
		\bib{Is06}{book}{
			author={Isakov, Victor},
			title={Inverse problems for partial differential equations},
			edition={Second},
			series={Applied Mathematical Sciences},
			publisher={Springer, New York},
			date={2006},
			volume={127},
			ISBN={978-0387-25364-0; 0-387-25364-5},
			review={\MR{2193218}},
		}
		
		\bib{Kh89}{inproceedings}{
			author={Kha{\i}darov, A},
			title={On stability estimates in multidimensional inverse problems for
				differential equations},
			date={1989},
			booktitle={Soviet math. dokl},
			volume={38},
			pages={614\ndash 617},
		}
		
		\bib{Kl13}{article}{
			author={Klibanov, Michael~V.},
			title={Carleman estimates for global uniqueness, stability and numerical
				methods for coefficient inverse problems},
			date={2013},
			ISSN={0928-0219},
			journal={J. Inverse Ill-Posed Probl.},
			volume={21},
			number={4},
			pages={477\ndash 560},
			url={https://doi.org/10.1515/jip-2012-0072},
			review={\MR{3089754}},
		}
		
		\bib{Marchenko}{book}{
			author={Marchenko, Vladimir~A.},
			title={Sturm-{L}iouville operators and applications},
			edition={Revised},
			publisher={AMS Chelsea Publishing, Providence, RI},
			date={2011},
			ISBN={978-0-8218-5316-0},
			url={https://doi.org/10.1090/chel/373},
			review={\MR{2798059}},
		}
		
		\bib{Uhlmann_backscattering}{article}{
			author={Melrose, Richard},
			author={Uhlmann, Gunther},
			title={Generalized backscattering and the {L}ax-{P}hillips transform},
			date={2008},
			ISSN={1310-6600},
			journal={Serdica Math. J.},
			volume={34},
			number={1},
			pages={355\ndash 372},
			review={\MR{2414425}},
		}
		
		\bib{MelroseUhlmann_bookdraft}{book}{
			author={Melrose, Richard B.},
			author={Uhlmann, Gunther},
			title={An introduction to microlocal analysis},
			note={Book in preparation, available at
				\url{http://www-math.mit.edu/~rbm/books/imaast.pdf}},
		}
		
		\bib{Melrose}{book}{
			author={Melrose, Richard~B.},
			title={Geometric scattering theory},
			series={Stanford Lectures},
			publisher={Cambridge University Press, Cambridge},
			date={1995},
			ISBN={0-521-49673-X; 0-521-49810-4},
			review={\MR{1350074}},
		}
		
		
		\bib{MeronnoPotencianoSalo}{article}{
			author={Mero\~{n}o, Crist\'{o}bal~J.},
			author={Potenciano-Machado, Leyter},
			author={Salo, Mikko},
			title={The fixed angle scattering problem with a first order
				perturbation},
			date={2020},
			journal={arXiv:2009.13315},
		}
		
		\bib{Meronno_thesis}{book}{
			author={Mero{\~n}o, Crist{\'o}bal~J.},
			title={Recovery of singularities in inverse scattering},
			publisher={Thesis (Ph.D.)--Universidad Aut\'{o}noma de Madrid, Madrid},
			date={2018},
		}
		
		\bib{PaternainSaloUhlmannZhou}{article}{
			author={Paternain, Gabriel~P.},
			author={Salo, Mikko},
			author={Uhlmann, Gunther},
			author={Zhou, Hanming},
			title={The geodesic {X}-ray transform with matrix weights},
			date={2019},
			ISSN={0002-9327},
			journal={Amer. J. Math.},
			volume={141},
			number={6},
			pages={1707\ndash 1750},
			url={https://doi.org/10.1353/ajm.2019.0045},
		}
		
		\bib{RSfixed2019}{article}{
			author={Rakesh},
			author={Salo, Mikko},
			title={Fixed angle inverse scattering for almost symmetric or controlled
				perturbations},
			date={2020},
			ISSN={0036-1410},
			journal={SIAM J. Math. Anal.},
			volume={52},
			number={6},
			pages={5467\ndash 5499},
			url={https://doi.org/10.1137/20M1319309},
			review={\MR{4170189}},
		}
		
		\bib{RSfixed2019March}{article}{
			author={Rakesh},
			author={Salo, Mikko},
			title={The fixed angle scattering problem and wave equation inverse
				problems with two measurements},
			date={2020},
			ISSN={0266-5611},
			journal={Inverse Problems},
			volume={36},
			number={3},
			pages={035005, 42},
			url={https://doi.org/10.1088/1361-6420/ab23a2},
			review={\MR{4068234}},
		}
		
		\bib{RakeshUhlmann}{article}{
			author={Rakesh},
			author={Uhlmann, Gunther},
			title={Uniqueness for the inverse backscattering problem for angularly
				controlled potentials},
			date={2014},
			ISSN={0266-5611},
			journal={Inverse Problems},
			volume={30},
			number={6},
			pages={065005, 24},
			url={https://doi.org/10.1088/0266-5611/30/6/065005},
			review={\MR{3224125}},
		}
		
		\bib{Ruiz}{article}{
			author={Ruiz, Alberto},
			title={Recovery of the singularities of a potential from fixed angle
				scattering data},
			date={2001},
			ISSN={0360-5302},
			journal={Comm. Partial Differential Equations},
			volume={26},
			number={9-10},
			pages={1721\ndash 1738},
			url={https://doi.org/10.1081/PDE-100107457},
			review={\MR{1865943}},
		}
		
		\bib{Stefanov_generic}{article}{
			author={Stefanov, Plamen},
			title={Generic uniqueness for two inverse problems in potential
				scattering},
			date={1992},
			ISSN={0360-5302},
			journal={Comm. Partial Differential Equations},
			volume={17},
			number={1-2},
			pages={55\ndash 68},
			url={https://doi.org/10.1080/03605309208820834},
			review={\MR{1151256}},
		}
		
		\bib{SU13}{article}{
			author={Stefanov, Plamen},
			author={Uhlmann, Gunther},
			title={Recovery of a source term or a speed with one measurement and
				applications},
			date={2013},
			ISSN={0002-9947},
			journal={Trans. Amer. Math. Soc.},
			volume={365},
			number={11},
			pages={5737\ndash 5758},
			url={https://doi.org/10.1090/S0002-9947-2013-05703-0},
			review={\MR{3091263}},
		}
		
		\bib{Uh00}{incollection}{
			author={Uhlmann, Gunther},
			title={Inverse scattering in anisotropic media},
			date={2000},
			booktitle={Surveys on solution methods for inverse problems},
			publisher={Springer, Vienna},
			pages={235\ndash 251},
			review={\MR{1766746}},
		}
		
	\end{biblist}
\end{bibdiv}

}

\end{document}